\newtheorem{theorem}[subsection]{Theorem}
\newtheorem{lemma}[subsection]{Lemma}
\newtheorem{cor}[subsection]{Corollary}
\newtheorem{prop}[subsection]{Proposition}
\theoremstyle{definition}
\newtheorem{definition}[subsection]{Definition}
\newtheorem{remark}[subsection]{Remark}
\newcommand{\haus}{\mathcal{H}}
\newcommand{\spt}{\mathrm{spt}}
\newcommand{\reg}{\mathrm{reg}}
\newcommand{\sign}{\mathrm{sign}}
\newcommand{\graph}{\mathrm{graph}}
\newcommand{\eps}{\epsilon}
\newcommand{\sing}{\mathrm{sing}}
\newcommand{\R}{\mathbb{R}}
\newcommand{\bC}{\mathbf{C}}
\newcommand{\del}{\partial}
\newcommand{\mdiv}{\mathrm{div}}
\newcommand{\cM}{\mathcal{M}}
\newcommand{\N}{\mathbb{N}}
\newcommand{\Z}{\mathbb{Z}}
\title{A Liouville-type theorem for cylindrical cones}
\author{Nick Edelen}
\address{Department of Mathematics, University of Notre Dame, Notre
  Dame, IN 46556}
\email{nedelen@nd.edu}
\author{G\'abor Sz\'ekelyhidi}
\address{Department of Mathematics, Northwestern University, Evanston,
IL 60208}
\email{gaborsz@northwestern.edu}
\begin{document}

\begin{abstract}
  Suppose that $\bC_0^n \subset \R^{n+1}$ is a smooth strictly
  minimizing and strictly stable minimal hypercone, $l \geq 0$, and
  $M$ a complete embedded minimal hypersurface of $\R^{n+1+l}$ lying to one side
  of $\bC = \bC_0 \times \R^l$. If the density at infinity of $M$ is less than twice the
  density of $\bC$, then we show that $M = H(\lambda) \times \R^l$,
  where $\{H(\lambda)\}_\lambda$ is the Hardt-Simon foliation of $\bC_0$. This extends a
  result of L. Simon, where an additional smallness assumption is
  required for the normal vector of $M$. 
\end{abstract}
\maketitle

\section{Introduction}
Liouville type theorems, that is the rigidity properties of entire solutions of
certain partial differential equations, are ubiquitous in geometric
analysis. In this paper we prove a Liouville type theorem for minimal
hypersurfaces lying on one side of a minimal cylindrical hypercone,
extending a recent result of L. Simon~\cite{SimonLiouville}. 

To state the main result, let $\bC_0^n \subset \R^{n+1}$ be a smooth
strictly minimizing and strictly stable minimal hypercone, and let
$\bC = \bC_0 \times \R^l$ for some $l \geq 0$.  Write $\{ H(\lambda)
\}_\lambda$ for the Hardt-Simon foliation~\cite{HS85}
associated to $\bC_0$.  See Section~\ref{sec:background}
 for more details on the notation.

Our main result is the following.   
\begin{theorem}\label{thm:main}
If $M$ is a smooth complete embedded minimal hypersurface of $\R^{n+l+1}$ lying to one
side of $\bC$ satisfying the density bound $\theta_M(\infty) < 2
\theta_\bC(0)$, then $M = H(\lambda) \times \R^l$ for some $\lambda$.  
\end{theorem}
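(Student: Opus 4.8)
The plan is to use the Hardt--Simon leaves $H(\lambda)\times\R^l$ as a foliation of $\R^{n+l+1}$ by minimal hypersurfaces, smooth except for the leaf $\lambda=0$ which is $\bC$ itself, and to show that $M$ must be one of these leaves. After relabeling we may assume $M$ lies in the closed region swept out by the leaves with $\lambda\geq 0$; we may also assume $M$ connected, since otherwise each component would contribute at least $\theta_\bC(0)$ to $\theta_M(\infty)$ (by the tangent cone analysis below), violating the density bound. Then every $p\in M$ lies on a unique leaf $H(\lambda(p))\times\R^l$, giving a continuous function $\lambda\colon M\to[0,\infty)$ (smooth since $M$ avoids the spine $\{0\}\times\R^l$, where its tangent plane would be a hyperplane lying on one side of $\bC$). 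If $\lambda$ attains an interior extremum at a value $\lambda_0>0$, then $M$ touches the smooth minimal hypersurface $H(\lambda_0)\times\R^l$ from one side, so the strong maximum principle and unique continuation give $M=H(\lambda_0)\times\R^l$; the value $\lambda_0=0$ cannot occur, since then unique continuation would force $M$ to contain the singular set of $\bC$. As $M$ is complete, it therefore suffices to prove that $\lambda(p)$ converges to a single value as $|p|\to\infty$, i.e. that $M$ is asymptotic to a single leaf; then $\inf_M\lambda=\sup_M\lambda$, so $\lambda$ is constant and $M$ equals that leaf.

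To obtain this, I would analyze the tangent cones of $M$ at infinity. By the monotonicity formula and the density bound, these exist and are stationary integral cones lying weakly on one side of $\bC$ with density $\theta_M(\infty)<2\theta_\bC(0)$; the central claim is that every such cone is $\bC$ with multiplicity one. The key input is that any minimal cone lying on one side of $\bC$ must contain $\bC$: being dilation-invariant while the Hardt--Simon leaves interpolate, such a cone accumulates onto $\bC$ and hence touches it at some point of unit norm; away from the spine the strong maximum principle and unique continuation then force it to contain $\bC$, while a Federer dimension-reduction argument (together with the fact that no hyperplane through the origin can lie on one side of $\bC$, a consequence of Frankel's theorem in $S^{n+l}$) handles contact occurring only along the spine. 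Granting this, strict minimality and strict stability of $\bC_0$ preclude extra minimal sheets trapped on one side below the density threshold, so the cone is $\bC$ with integer multiplicity $k$, and $k\,\theta_\bC(0)<2\theta_\bC(0)$ forces $k=1$. I expect this step---identifying exactly which minimal cones sit on one side of $\bC$ below twice its density, and in particular excluding two-sheeted behavior---to be the main obstacle.

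Once every tangent cone at infinity of $M$ is $\bC$ with multiplicity one, regularity theory upgrades this to smooth asymptotics: Allard's theorem applies away from the spine, and a sheeting theorem for stable minimal hypersurfaces of density below $2\theta_\bC(0)$ applies near the cylindrical singular set---this is exactly where the factor of two is essential, as it forbids two sheets collapsing onto $\bC$---so that for $R$ large $M\setminus B_R$ is a single smooth graph over $\bC\setminus B_R$ of a function $u$ with $r^{-1}|u|+|Du|\to 0$ as $r\to\infty$. In particular the normal vector of $M$ is small near infinity, which is the smallness hypothesis needed to apply Simon's Liouville theorem~\cite{SimonLiouville} (if a global rather than asymptotic smallness is required, one first propagates graphicality inward using the foliation barriers from the first paragraph). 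Simon's theorem then yields $M=H(\lambda)\times\R^l$. Equivalently, one may finish directly: the decay of $u$ must match the slowest indicial mode permitted by the strict stability of $\bC_0$, which is precisely the Hardt--Simon rate, and this identifies the limiting leaf and closes the maximum-principle argument of the first paragraph.
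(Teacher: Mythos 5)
Your outline correctly identifies the soft part of the argument (tangent cones at infinity of $M$ are $[\bC]$ with multiplicity one, via monotonicity, the density bound, and the Solomon--White/Ilmanen maximum principles; this is Lemma~\ref{lem:cone-liou} in the paper), and the first-paragraph maximum-principle scheme is a correct reduction: it suffices to show $M$ is asymptotic to a \emph{single} leaf, uniformly, including along the spine. But the way you close the argument has a genuine gap. You propose to upgrade ``tangent cone at infinity is $\bC$'' to graphicality with small gradient and then invoke Simon's Liouville theorem. The hypothesis of that theorem is a quantitative smallness of $\nu_y$ on all of $M$ (not merely asymptotically), and removing exactly that hypothesis is the entire content of the present paper. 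Your parenthetical --- ``one first propagates graphicality inward using the foliation barriers'' --- is not a proof: the problematic region is a neighborhood of the spine $\{|x|=0\}$ at every scale, where $M$ need not be graphical over $\bC$ at all and where the excess could a priori concentrate. Controlling $M$ there is what the non-concentration theorem (Theorem~\ref{thm:nc}) does, and it requires the sliding barrier surfaces of Theorem~\ref{thm:barrier} and the graphical barriers built from $\Phi_{\pm,\eps}$ and $F_{\gamma'}$; none of this is supplied by your sketch. Relatedly, the ``sheeting theorem near the cylindrical singular set'' you invoke requires stability of $M$, which is not among the hypotheses (the paper never uses stability of $M$, only one-sidedness, and indeed the result extends to stationary integral varifolds).

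There is a second, related gap in the final step. Even granting that $M\setminus B_R$ is a graph of $u$ over $\bC$ with $r^{-1}|u|+|Du|\to 0$, the assertion that ``the decay of $u$ must match the slowest indicial mode, which identifies the limiting leaf'' is precisely the hard quantitative statement. For $l\geq 1$ the indicial analysis is not a one-variable ODE argument: Jacobi fields on $\bC=\bC_0\times\R^l$ can grow or oscillate in the $y$-directions, and one needs (i) the $\beta$-harmonic expansion and the Liouville lemma for nonnegative Jacobi fields (Lemma~\ref{lem:linear-liouville}) to get the upper decay bound $E(M,\bC,R)\lesssim R^{\gamma-1+\eps}$ from one-sidedness, and (ii) the geometric three-annulus lemma (Lemma~\ref{lem:3ann}), whose proof again rests on the non-concentration estimate, to produce a leaf $T_\lambda$ with the strict lower growth bound $E(M,T_\lambda,R)\gtrsim R^{\gamma-1+\eps_2}$ when $M\neq T_\lambda$. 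The contradiction between these two rates is how the paper concludes. Your proposal names the right heuristic but replaces both quantitative inputs with an appeal to Simon's theorem, whose hypotheses you cannot verify without essentially reproving the paper's main technical results.
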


Previously Simon~\cite{SimonLiouville} showed that the same conclusion holds under the additional assumption that the component $\nu_y$ of the normal
vector to $M$ in the $\R^l$ direction is sufficiently small.  The $l=0$ case of the Theorem is due to Hardt-Simon~\cite{HS85}, who proved it for smooth $\bC$ which are merely minimizing.  The \emph{existence} of a foliation associated to a minimizing hypercone $\bC$ was first proven by \cite{BDG} (for quadratic $\bC$), \cite{HS85} (for smooth $\bC$), and just recently \cite{Wang} (for any $\bC$).

The Hardt-Simon foliation and Liouville theorems of \cite{HS85, SimonLiouville} have been of fundamental importance in the analysis of minimal hypersurfaces, including in results concerning generic regularity of stable or minimizing $7$-dimensional hypersurfaces \cite{HS85, Smale, ChLiSp, LiWang}, the construction of stable or minimizing singular minimal hypersurfaces \cite{HS85, SimonSing, Sz21}, and local regularity/tangent cone uniqueness \cite{SimonCAG, EdSp, Sz20, Ed}.

Let us also make the following remark.
\begin{remark}
With only cosmetic changes, Theorem \ref{thm:main} (and all the other lemmas/theorems in this paper) continue to hold for stationary integral varifolds in place of smooth, complete minimal surfaces.  So, if $V$ is a non-zero stationary integral $(n+l)$-varifold in $\R^{n+l+1}$ with $\theta_V(\infty) < 2\theta_\bC(0)$ and $\spt V$ lying to one side of $\bC$, then $V = [H(\lambda) \times \R^l]$ for some $\lambda$.
\end{remark}

Some of the basic ideas and strategies that we use originate from
\cite{Sz20, Sz21}, but the explicit nature of our comparison surfaces
$T_\lambda := H(\lambda) \times \R^l$ allows for significant simplifications. A key
technical tool is a geometric 3-annulus lemma (Lemma~\ref{lem:3ann})
for an excess $E(M,
T_\lambda, R)$ defined for $M$ with respect to $T_\lambda$ at scale
$R$. This in turn depends on a non-concentration estimate
(Theorem~\ref{thm:nc}) to reduce the estimate to the corresponding result for
Jacobi fields.

Given the 3-annulus lemma, the argument can be summarized as follows:
the fact that $M$ lies on one side of $\bC$ implies that the excess of
$M$ with respect to $T_0 = \bC$ grows at most at rate $R^{\gamma-1 +
  \epsilon}$ as $R\to\infty$ for any $\epsilon > 0$. Here $r^\gamma$
is the growth rate of the only positive admissible Jacobi field on
$\bC_0$. At the same time we show that if $\lambda$ is chosen
appropriately then the excess of $M$ with respect to $T_\lambda$ at
scale $R$ grows at least at rate $R^{\gamma-1 + \epsilon_0}$ for some
$\epsilon_0 > 0$. This uses that fact that $r^\gamma$ on $\bC$ is generated by pushing into the $T_\lambda$, and is the smallest
possible growth rate of admissible Jacobi fields on $\bC$. Combining
these two results we get a contradiction, unless the excess of $M$
with respect to $T_\lambda$ is zero, i.e. $M=T_\lambda$. 

\textbf{Acknowledgements} N.E. was supported in part by NSF grant
DMS-2204301. G. Sz. was supported in part by NSF grant DMS-2203218.  We thank Otis Chodosh for helpful conversations.

\section{Preliminaries} \label{sec:background}

Throughout this paper $\bC_0$ will be a smooth minimal hypercone in $\R^{n+1}$, $l$ a non-negative integer, and $\bC = \bC_0 \times \R^l \subset \R^{n+l+1} \equiv \R^{n+1} \times \R^l = \{ (x, y) : x \in \R^{n+1} , y \in \R^l\}$.  When we write $u : \bC \to \R$ we mean $u : \reg \bC \to \R$.  Define $B_\rho(\xi)$ to be the open Euclidean ball in $\R^{n+l+1}$ of radius $\rho$ centered at $\xi$, $B_\rho \equiv B_\rho(0)$, and $A_{r, \rho} = B_r \setminus \overline{B_\rho}$ to be the open annulus centered at $0$.  Write $\omega_{n}$ for the volume of the Euclidean $n$-ball.  Let $\eta_{X, \rho}(Y) = (Y - X)/\rho$ be the translation/rescaling map.

\subsection{Cylindrical cones}
The Jacobi operator on $\bC_0$ is $L_{\bC_0} f = \Delta_{\bC_0} f + |A_{\bC_0}|^2 f$.  In polar coordinates $x = r\theta$ this becomes
\[
L_{\bC_0} = \del_r^2 + r^{-1} (n-1)\del_r + r^{-2} L, \quad L = \Delta_\Sigma + |A_\Sigma|^2,
\]
so that $L_\Sigma = L + (n-1) = \Delta_\Sigma + |A_\Sigma|^2 + (n-1)$ is the Jacobi operator of $\Sigma \subset S^n$.  Write $\lambda_1 < \lambda_2 \leq \lambda_3 \leq \ldots$ for the eigenvalues of $L$, and $\{ \psi_j \}_{j \geq 1}$ for the corresponding $L^2(\Sigma)$-ON basis eigenfunctions, so that $L \psi_j + \lambda_j \psi_j = 0$. Define
\[
\gamma_j^\pm = -(n-2)/2 \pm \sqrt{((n-2)/2)^2 + \lambda_j},
\]
so that every linear combination $u(x = r\theta) = c_j^+ r^{\gamma_j^+} \psi_j(\theta) + c_j^- r^{\gamma_j^-} \psi_j(\theta)$ is a Jacobi field on $\bC_0$.  We assume $\bC_0$ is \emph{strictly stable}, which means that $\gamma_j^- < -(n-2)/2 < \gamma_j^+$.  For shorthand we shall write $\gamma_j = \gamma_j^+$ and $\gamma = \gamma_1 = \gamma_1^+$.

Let $H_\pm$ be leaves of the Hardt-Simon foliation~\cite{HS85} of $\bC_0$, lying on different sides of $\bC_0$, so that each $H_\pm$ is oriented compatibly with $\bC_0$ (i.e. so that $\nu_{H_\pm} \to \nu_{\bC_0}$ as $r \to \infty$).  We assume $\bC_0$ is \emph{strictly minimizing}, which means there is a radius $R_0(\bC_0)$ so that (possibly after appropriately rescaling $H_\pm$)
\begin{equation}\label{eqn:H-graph}
H_\pm\setminus B_{R_0} = \graph_{\bC_0}(\Psi_\pm),
\end{equation}
where
\begin{equation}\label{eqn:H-graph2}
\Psi_\pm(x = r\theta) = \pm r^{\gamma} \psi_1(\theta) + v_\pm, \quad |v_\pm| \leq r^{\gamma - \alpha_0}
\end{equation}
for some $\alpha_0(\bC_0) > 0$ (see e.g. \cite[Equation (10),
p. 114]{HS85}).  It follows by standard elliptic estimates
(see e.g. \cite[Proposition 2.2]{Sz21}) that 
\begin{gather}\label{eqn:v-bounds}
|\nabla^i v_\pm| \leq c(\bC, i) r^{\gamma - i - \alpha_0}.
\end{gather}

\vspace{3mm}

Define
\[
H(t) = \left\{ \begin{array}{l l} |t|^{1/(1-\gamma)} H_{\mathrm{sign}(t)} & t \neq 0 \\ \bC_0 & t = 0 \end{array} \right. ,
\]
so that
\begin{gather}\label{eqn:def-Psi}
H(t) \setminus B_{|t|^{1/(1-\gamma)}R_0} = \graph_{\bC_0}(\Psi_t), \quad \Psi_t(x) = |t|^{1/(1-\gamma)} \Psi_{\mathrm{sign}(t)}(|t|^{-1/(1-\gamma)}x),
\end{gather}
and hence
\[
|\Psi_t(x) - t r^{\gamma} \psi_1(\theta) | \leq
|t|^{1+\alpha_0/(1-\gamma)} r^{\gamma - \alpha_0}. 
\]

\begin{lemma}\label{lem:Phi}
For sufficiently small $\eps$ (depending only on $\bC_0$), we can write $(1+\eps)H_+$ as a graph over $H_+$ of the function $\Phi_{+, \eps}$, which we can expand as
\begin{equation}\label{eqn:Phi-concl1}
\Phi_{\eps, +} = \eps \Phi_+ + \eps^2 V_{+, \eps},
\end{equation}
where: $\Phi_+$ is a positive Jacobi field on $H_+$ satisfying
\begin{gather}
\Phi_+(x+\Psi_+(x)\nu_{\bC_0}(x)) = (1-\gamma) r^\gamma \psi_1(\theta) + O(r^{\gamma-\alpha_0}) \text{ for } x = r\theta \in \bC_0 \setminus B_{R_0}, \\
\text{ and }|\nabla^i \Phi_+(x)| \leq c(\bC, i) |x|^{\gamma - i}, \quad i = 0, 1, 2, \ldots ;
\end{gather}
and $V_{+, \eps}$ satisfies the estimates
\begin{equation}
|\del_\eps^j \nabla^i V_{+,\eps}(x)| \leq c(\bC, i, j) |x|^{\gamma - i}.
\end{equation}
The same statements hold with $(1-\eps)H_-$, $\Phi_{-, \eps}$, $\Phi_-$, $V_{-, \eps}$ in place of $(1+\eps)H_+$, $\Phi_{+,\eps}$, $\Phi_+$, $V_{+, \eps}$.
\end{lemma}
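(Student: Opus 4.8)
The plan is to obtain $\Phi_{\eps,+}$ directly from the graphical description of the Hardt-Simon leaves, together with the scaling structure of the foliation. The key observation is that the family $t \mapsto H(t)$ is a foliation by minimal hypersurfaces, so its variation vector field is automatically a Jacobi field on each leaf; and dilating $H_+$ by $1+\eps$ produces another leaf of the foliation, namely $H(t_\eps)$ for the value $t_\eps$ with $t_\eps^{1/(1-\gamma)} = 1+\eps$, i.e. $t_\eps = (1+\eps)^{1-\gamma}$. Thus $(1+\eps)H_+ = H((1+\eps)^{1-\gamma})$, and writing this as a normal graph over $H_+ = H(1)$ amounts to understanding the foliation near the parameter value $t=1$.

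Concretely, I would first fix $R_1 \geq 2R_0$ and work on $H_+ \setminus B_{R_1}$, where both $H_+$ and $(1+\eps)H_+$ are graphs over $\bC_0 \setminus B_{R_0}$ via $\Psi_+$ and $\Psi_{t_\eps}$ respectively (using \eqref{eqn:def-Psi}; note $t_\eps^{1/(1-\gamma)}R_0 = (1+\eps)R_0 < R_1$ for $\eps$ small). On this region the difference $\Psi_{t_\eps} - \Psi_+$ is a smooth function of $\eps$ and $x$, with a Taylor expansion in $\eps$ about $\eps=0$ whose coefficients I can read off from the explicit formula $\Psi_t(x) = t^{1/(1-\gamma)}\Psi_+(t^{-1/(1-\gamma)}x)$ and the expansion \eqref{eqn:H-graph2}. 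The linear-in-$\eps$ term gives $(1-\gamma)r^\gamma\psi_1(\theta) + O(r^{\gamma-\alpha_0})$ after differentiating $(1+\eps)^{1-\gamma} \mapsto t^{1/(1-\gamma)}\big(\pm r^\gamma\psi_1 + v_\pm\big)$ — the factor $(1-\gamma)$ appears from $\frac{d}{d\eps}\big|_0 (1+\eps)^{\gamma} \cdot (\text{leading coefficient})$ type bookkeeping, together with the $-r\del_r$ contribution from rescaling the argument, which on the homogeneous piece $r^\gamma\psi_1$ contributes $-\gamma$, net $(1-\gamma)$ — and the remainder absorbs into $V_{+,\eps}$. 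To convert this graph-over-$\bC_0$ description into a genuine normal graph over $H_+$ itself, I would compose with the (smooth, uniformly controlled at the stated rates by \eqref{eqn:v-bounds}) diffeomorphism between a tubular neighborhood of $\bC_0$ and a tubular neighborhood of $H_+$; since $H_+$ is itself $C^\infty$-asymptotic to $\bC_0$ with the decay in \eqref{eqn:H-graph2}, this change of parametrization alters the leading term only by $O(r^{\gamma-\alpha_0})$ and preserves all the derivative bounds $|\nabla^i \Phi_+| \leq c|x|^{\gamma-i}$, $|\del_\eps^j\nabla^i V_{+,\eps}| \leq c|x|^{\gamma-i}$ by standard elliptic / implicit-function estimates applied uniformly in the relevant scales (rescaling $B_{2\rho}\setminus B_\rho$ to unit size as in the cited \cite[Proposition 2.2]{Sz21}). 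Positivity of $\Phi_+$ is inherited from the fact that the foliation leaves are disjoint and ordered: $(1+\eps)H_+$ lies strictly on the far side of $H_+$ from $\bC_0$ for $\eps > 0$, so the normal graph function has a sign, and $\Phi_+ = \del_\eps|_0 \Phi_{\eps,+}$ is then $\geq 0$; strict positivity follows from the strong maximum principle for the Jacobi operator $L_{H_+}$ (a Jacobi field that is $\geq 0$ and vanishes somewhere must vanish identically, which contradicts the leading asymptotics $(1-\gamma)r^\gamma\psi_1 > 0$ since $\psi_1 > 0$ and $\gamma < 1$). The fact that $\Phi_+$ solves $L_{H_+}\Phi_+ = 0$ is, as noted, immediate from differentiating the minimal surface equation along the foliation.

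The main obstacle I anticipate is the bookkeeping in passing between the three parametrizations — graph over $\bC_0$, graph over $H_+$, and normal graph over $H_+$ — while keeping the $\eps$-dependence smooth with \emph{uniform} derivative estimates in $\eps$ down to $\eps = 0$, and simultaneously extracting the precise leading coefficient $(1-\gamma)$ rather than just its order of magnitude. This is purely a matter of careful Taylor expansion plus the scale-invariant elliptic estimates, with no conceptual difficulty, but it is where all the constants must be tracked. The near-origin region $H_+ \cap B_{R_1}$ (where the graphical description over $\bC_0$ fails) is handled separately: there $(1+\eps)H_+$ is still a small normal perturbation of $H_+$ for $\eps$ small, $\Phi_{\eps,+}$ is smooth in $(\eps,x)$ by the implicit function theorem applied to the minimal surface operator on a fixed compact piece of $H_+$, and the bounds $|x|^{\gamma-i} \asymp 1$ there make the stated estimates trivial; one then patches the two regions, noting the expansions agree on the overlap. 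Finally, the statement for $(1-\eps)H_-$ is identical after replacing $H_+$ by $H_-$, $\psi_1$ by $\psi_1$ (still positive), and $\eps$ by $-\eps$ in the role of the dilation parameter, so no separate argument is needed.
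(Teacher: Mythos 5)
Your proposal is correct and follows essentially the same route as the paper: identify $(1+\eps)H_+$ as the rescaled leaf $H((1+\eps)^{1-\gamma})$, Taylor-expand $(1+\eps)\Psi_+(x/(1+\eps))-\Psi_+(x)$ to extract the coefficient $(1-\gamma)$, convert between the graph-over-$\bC_0$ and graph-over-$H_+$ parametrizations with controlled errors, and invoke scale-invariant elliptic estimates for the derivative bounds. Your positivity argument via the ordering of the foliation leaves plus the strong maximum principle is just a rephrasing of the paper's appeal to star-shapedness of $H_+$ (whose normal variation under dilation is exactly $x\cdot\nu_{H_+}$), so no substantive difference remains.
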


\begin{proof}
The decomposition \eqref{eqn:Phi-concl1} simply follows from the definition of Jacobi field.  Positivity of $\Psi_+$ comes from the star-shapedness of $H_+$. For $x = r\theta$ with $r \gg 1$, we can write
\[
\Phi_{\eps, +}(x + \Psi_+(x) \nu_{\bC_0}(x)) (1+E_1(x)) = \left(  (1+\eps)\Psi_+(E_2(x)/(1+\eps)) - \Psi_+(E_2(x)) \right) .
\]
where $E_1, E_2 - id$ are smooth functions which are (at minimum) linearly controlled by $r^{-1} \Psi_+(x), \nabla \Psi_+(x)$.  We compute:
\begin{align*}
&(1+\eps)\Psi_+(x/(1+\eps)) - \Psi_+(x) \\
&= ((1+\eps)^{1-\gamma} - 1)r^\gamma \psi_1(\theta) + \int_1^{1+\eps} (v_+ - r \del_r v_+)|_{x/\lambda} d\lambda \\
&= ((1-\gamma) \eps + O(\eps^2))r^\gamma \psi_1(\theta) + O(\eps r^{\gamma-\alpha_0}).
\end{align*}
The bounds for $\Psi_+$ and $\nabla^i V_{+,\eps}$ follow by the above computations and standard elliptic estimates.
\end{proof}

\subsection{Minimal surfaces and varifolds} 

It will be convenient to use the language of varifolds, see \cite{SimonGMT} for a standard references.  We shall write $||V||$ for the mass measure of a varifold, and given a countably-$(n+l)$-rectifiable set $M \subset \R^{n+l+1}$ we write $[M]$ for the integral $(n+l)$-varifold with mass measure $\haus^{n+l} \llcorner M$.

Recall that the monotonicity formula for stationary (integral)
$(n+l)$-varifolds in $\R^{n+l+1}$ says the density ratio 
\[
\theta_V(\xi, \rho) := \frac{||V||(B_\rho(\xi))}{\omega_{n+l} \rho^{n+l}}
\]
is increasing in $\rho$, for any $\xi \in \R^{n+l+1}$, and is constant if and only if $V$ is a cone over $\xi$.  We define the density of $V$ at a point $\xi$, resp. and at $\infty$, by
\[
\theta_V(\xi) = \lim_{\rho \to 0} \theta_V(\xi, \rho), \quad \text{resp. } \theta_V(\infty) = \lim_{\rho \to \infty} \theta_V(0, \rho).
\]
If $V = [M]$, we understand $||M||(U) \equiv ||V||(U)$, $\theta_M(\xi, \rho) \equiv \theta_{[M]}(\xi, \rho)$, etc.

\begin{lemma}\label{lem:cone-liou}
Let $V$ be a non-zero stationary integral varifold cone in $\R^{n+l+1}$, such that $\spt V$ lies to one side of $\bC$.  Then $V = k[\bC]$ for some integer $k \geq 1$.
\end{lemma}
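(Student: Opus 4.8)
The plan is to use the classification of minimal cones lying on one side of $\bC$ together with the structure of $\bC = \bC_0 \times \R^l$. First I would reduce to the case $l = 0$ by a dimension-reduction / splitting argument: since $V$ is a cone and $\bC$ splits off an $\R^l$ factor, I would show $V$ must also split off $\R^l$. Concretely, fix a point $p = (0, y_0) \in \{0\} \times \R^l$ in the spine of $\bC$ that lies in $\spt V$ (such a point exists because $\spt V$ lies on one side of $\bC$ but is a nonzero cone through the origin, so it must touch the spine, else one could translate $\bC$ in the $\R^l$ direction — I'd make this precise using that $\spt V$ is a cone and $\bC \times \R^l$ is invariant under translations in $y$). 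At such a point the tangent cone to $V$ is again a stationary cone lying to one side of $\bC$, and translation-invariance of $\bC$ in the $y$-direction forces the tangent cone at $p$ to split an extra $\R$; iterating and using upper semicontinuity of density plus the fact that $V$ is already a cone at $0$, one concludes $V = V_0 \times \R^l$ for a stationary integral cone $V_0$ in $\R^{n+1}$ with $\spt V_0$ on one side of $\bC_0$.

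Next, with $l = 0$: $V_0$ is a stationary integral $n$-varifold cone in $\R^{n+1}$ whose support lies in the closed region bounded by $\bC_0$ on one side. Here I would invoke the maximum principle for stationary varifolds (in the form of Ilmanen / Solomon–White, or Simon's sheeting): since $\spt V_0$ touches the smooth minimal hypersurface $\reg \bC_0$ from one side (it must touch, since both are cones through $0$ and $\spt V_0 \ne \{0\}$), the strong maximum principle forces $\spt V_0 \supset \reg \bC_0$, hence $\spt V_0 = \bC_0$ (as $\bC_0$ itself disconnects a neighborhood of any regular point, and $V_0$ cannot cross to the other side). Then by the constancy theorem on the connected set $\reg \bC_0$, $V_0 = k[\bC_0]$ for some positive integer $k$, and therefore $V = k[\bC_0 \times \R^l] = k[\bC]$.

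The main obstacle I expect is the first step — rigorously showing $V$ splits off the $\R^l$ factor. The subtlety is that "lying to one side of $\bC$" must be used to guarantee that $\spt V$ actually meets the spine $\{0\}\times\R^l$: a priori a cone could avoid the spine away from $0$. I would argue that if $\spt V \cap (\{0\}\times\R^l) = \{0\}$, then for small $|y_0|$ the translated surface is still on one side of $\bC$, and one can slide $\bC$ slightly in the $y$-direction to obtain a contradiction with the maximum principle at the first point of contact — or, more cleanly, note that the half-space side of $\bC$ is convex in the $y$-directions so $\spt V$, being a cone, is forced to contain whole lines $\{0\}\times\R y_0$ once it contains one non-origin point of the spine, and it must contain such a point because otherwise $\spt V$ lies in an open cone strictly smaller than a halfspace, contradicting stationarity via a calibration/first-variation comparison. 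Once splitting is established, the remaining steps are standard applications of the strong maximum principle and the constancy theorem.
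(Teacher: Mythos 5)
Your second step (strong maximum principle plus constancy theorem) is essentially the paper's entire proof, but the paper applies the maximum principles of Solomon--White \cite{SolomonWhite} and Ilmanen \cite{ilmanen} \emph{directly} to $V$ and $\bC$ in $\R^{n+l+1}$: since $\bC_0$ is smooth away from the origin, $\sing\bC = \{0\}\times\R^l$ has dimension $l \leq n+l-7$, which is exactly the codimension-$7$ smallness that Ilmanen's version of the strong maximum principle requires. So your entire first step --- splitting $V = V_0\times\R^l$ and reducing to $l=0$ --- is unnecessary. More importantly, it is also where your argument genuinely breaks down. Every mechanism you propose for showing that $\spt V$ meets the spine relies on ``sliding $\bC$ in the $y$-direction,'' but $\bC = \bC_0\times\R^l$ is invariant under $y$-translations, so this produces no new comparison surface and no first point of contact. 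The claim that containing one nonzero spine point forces $\spt V$ to contain the whole line $\{0\}\times\R y_0$ is also false for a cone (you only get the ray), and the fallback assertion that a stationary cone cannot lie in ``an open cone strictly smaller than a halfspace'' is not a theorem in the form you need (the region to one side of $\bC$ is not a halfspace, and the link of $V$ need not avoid an open hemisphere).

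There is a second, smaller gap that appears in both your reduction and your $l=0$ step: you assert that $\spt V$ must touch $\bC$ (resp.\ $\spt V_0$ must touch $\bC_0$) away from the origin ``since both are cones through $0$.'' That inference is invalid: two distinct cones with vertex at the origin, one lying on one side of the other, can meet only at the origin (e.g.\ $\{y=|x|\}$ and $\{y=0\}$ in $\R^2$). To produce a touching point in $\reg\bC$ one needs an actual argument --- for instance, using that dilations fix $\spt V$ but move the leaves $H(\mu)\times\R^l$ through the entire foliation, so that if $\spt V\not\subset\bC$ then $\spt V$ meets every leaf and one can extract a contact point with $\reg \bC$ --- or one must invoke a formulation of the maximum principle that already encodes this (as the versions in \cite{SolomonWhite,ilmanen} effectively do for the varifold-theoretic statement). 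As written, your proposal does not close either of these gaps.
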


\begin{proof}
Follows by the maximum principles of \cite{SolomonWhite}, \cite{ilmanen}, since $\sing \bC$ has dimension at most $n+l-7$.
\end{proof}

\subsection{$\beta$-harmonic functions and Jacobi fields}

For $\beta > 0$, \cite{SimonLiouville} introduced the notion of $\beta$-harmonic functions, which are functions $h(r, y)$ on $B_1^+ \subset \R^{1+l}_+ = \{ (r, y) \in \R \times \R^l : r > 0 \}$ solving
\begin{equation}\label{eqn:beta-eqn}
r^{-1-\beta} \del_r (r^{1+\beta} \del_r h) + \Delta_y h = 0,
\end{equation}
and satisfying the integrability hypothesis
\begin{equation}\label{eqn:beta-bound}
\int_{B_1^+} r^{-2} h^2  r^{1+\beta} < \infty.
\end{equation}
\cite{SimonLiouville} showed any such $h$ extends analytically in $r^2$ and $y$ to $\{ (r, y) : r^2 + |y|^2 < 1, r \geq 0\}$, and in particular can be written as a sum of homogenous $\beta$-harmonic polynomials in $r^2, y$.

In spherical coordinates $(r, y) = \rho \omega$, where $\rho = \sqrt{r^2 + |y|^2}$ and $\omega = (r, y)/\rho$, \eqref{eqn:beta-eqn} becomes
\begin{equation}\label{eqn:beta-sphere}
\rho^{-l-1-\beta} \del_\rho( \rho^{l+1+\beta} \del_\rho h) + \rho^{-2} \omega_1^{-1-\beta} \mdiv_{S^l}(\omega_1^{1+\beta} \nabla_{S^l} h) = 0.
\end{equation}
Here $\omega_1 = \omega \cdot \del_r \equiv r/\sqrt{r^2 + |y|^2}$.

\cite{SimonLiouville} showed that the $\beta$-harmonic homogenous polynomials $\{ h_q \}$, when restricted to $S^l_+$, are $L^2(\omega_1^{1+\beta} d\omega)$-complete.  So there is an $L^2(\omega_1^{1+\beta} d\omega)$-ON basis of functions $\{\phi_i\}_{i \geq 1}$, each being the restriction of a $\beta$-harmonic homogenous polynomial $h_i(\rho\omega) = \rho^{q_i} \phi_i(\omega)$.  From \eqref{eqn:beta-sphere}, we get the eigenvalue-type equation
\begin{equation}
\omega_1^{-1-\beta} \mdiv_{S^l}(\omega_1^{1+\beta} \nabla_{S^l} \phi_i) + q_i(q_i+l+\beta) \phi_i = 0
\end{equation}
on $S^l_+$.

\vspace{3mm}

For each $j \geq 1$, define $\beta_j = n-2 + 2\gamma_j = 2\sqrt{((n-2)/2)^2 + \lambda_j}$.  Let $v$ be a Jacobi field on $\bC \cap B_1$ satisfying
\[
\int_{\bC \cap B_1} |x|^{-2} v^2 < \infty.
\]
For every $j$, let $h_j(r, y) = r^{-\gamma_j} \int_\Sigma v(r\theta, y) \phi_j(\theta) d\theta$.  Then a straightoforward computation shows each $h_j$ is $\beta_j$-harmonic in $\bC \cap B_1$, and hence admits an analytic expansion of the form
\[
h_j = \sum_{i \geq 1} h_{ij}(r, y), 
\]
where each $h_{ij}$ is a $q_{ij}$-homogenous $\beta_j$-harmonic polynomial, for some integer $q_{ij} \geq 0$.  Moreover, all the $\{ h_{ij}|_{S^l_+} \}_i$ are $L^2(\omega_1^{1+\beta_j} S^l_+)$-orthogonal.

Therefore $v$ admits an expansion
\begin{equation}\label{eqn:fourier0}
v(r\theta, y) = \sum_{i, j \geq 1} r^{\gamma_j} \psi_j(\theta) h_{ij}(r, y) ,
\end{equation}
which holds in the following senses: in $L^2(\Sigma)$ for every fixed $(r, y)$; in $L^2(\bC \cap B_\rho)$ for every $\rho < 1$; in $C^\infty_{loc}(\bC \cap B_1 \setminus \{r = 0 \})$.  For every $0 < \rho < 1$ we have
\begin{equation}\label{eqn:fourier}
\int_{\bC \cap B_\rho} v^2 = \sum_{i, j \geq 0} c_{ij}^2 \rho^{n+l+2\gamma_j+q_{ij}} = \sum_{i \geq 1} a_i^2 \rho^{n+l+2q_i},
\end{equation}
where $\gamma_1 = q_1 < q_2 < \ldots$.  Note \eqref{eqn:fourier} implies that the function
\begin{equation}\label{eqn:fourier-inc}
\rho \mapsto \rho^{-n-l-2\gamma} \int_{\bC \cap B_\rho} v^2
\end{equation}
is increasing in $\rho$.

\vspace{3mm}

We require a few helper lemmas about ``tame'' Jacobi fields.
\begin{lemma}[\cite{Sz20}]\label{lem:linfty-l2}
Let $v$ be a Jacobi field on $\bC \cap B_1$ with $\sup_{\bC \cap B_1} ||x|^{-\gamma} v| < \infty$.  Then for every $\theta < 1$ we have the estimate
\begin{equation}\label{eqn:linfty-l2-concl1}
\sup_{\bC \cap B_\theta} ||x|^{-\gamma} v|^2 \leq c(\bC, \theta) \int_{\bC \cap B_1} v^2,
\end{equation}
and
\begin{equation}\label{eqn:linfty-l2-concl2}
\int_{\bC \cap B_1} v^2 \leq \int_{\bC \cap B_1} |x|^{-2} v^2 \leq c(\bC) \sup_{\bC \cap B_1} ||x|^{-\gamma} v|^2.
\end{equation}
\end{lemma}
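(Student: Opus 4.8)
\emph{Proof proposal.} The second chain \eqref{eqn:linfty-l2-concl2} is elementary. Since $|x|\le 1$ on $B_1$ we have $v^2\le |x|^{-2}v^2$, which is the first inequality. For the second, write $|x|^{-2}v^2=|x|^{2\gamma-2}(|x|^{-\gamma}v)^2$ and note that strict stability forces $2\gamma=2\gamma_1^+>2-n\geq 2-n-l$, so $\int_{\bC\cap B_1}|x|^{2\gamma-2}$ converges (the integrand is $\sim\rho^{n+l-3+2\gamma}$ near the origin, and bounded away from it); hence $\int_{\bC\cap B_1}|x|^{-2}v^2\le c(\bC)\sup_{\bC\cap B_1}||x|^{-\gamma}v|^2$. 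In particular the tameness hypothesis of \eqref{eqn:linfty-l2-concl1} implies the integrability hypothesis $\int_{\bC\cap B_1}|x|^{-2}v^2<\infty$, so the Fourier expansion \eqref{eqn:fourier0}--\eqref{eqn:fourier} is available; by the monotonicity \eqref{eqn:fourier-inc} this yields the $L^2$-growth bound
\begin{equation*}
\int_{\bC\cap B_\rho}v^2\le \rho^{\,n+l+2\gamma}\int_{\bC\cap B_1}v^2,\qquad 0<\rho\le 1,
\end{equation*}
and \eqref{eqn:fourier} also gives the mode bounds $a_i^2\le\int_{\bC\cap B_1}v^2$.

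For \eqref{eqn:linfty-l2-concl1} the plan is then to upgrade this $L^2$-growth bound to a pointwise bound by a rescaling argument. It suffices to treat $\theta=\tfrac12$ (the case of general $\theta<1$ follows by covering $\bC\cap B_\theta$ by controlled balls and, where such a ball meets $\sing\bC$, using that $\bC$ is invariant under the dilations/translations $\eta_{q,s}$, $q\in\sing\bC$). Given $p\in\bC\cap B_{1/2}$, rescaling $w(z)=|p|^{-\gamma}v(|p|z)$ produces a tame Jacobi field on $\bC\cap B_2$ which, by the displayed growth bound, satisfies $\int_{\bC\cap B_s}w^2\lesssim s^{\,n+l+2\gamma}\int_{\bC\cap B_2}w^2$ for $0<s\le 2$, and $||p|^{-\gamma}v(p)|=|w(p/|p|)|$. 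Thus \eqref{eqn:linfty-l2-concl1} reduces to the unit-scale statement: a tame Jacobi field $w$ on $\bC\cap B_2$ with $\int_{\bC\cap B_s}w^2\lesssim s^{\,n+l+2\gamma}\int_{\bC\cap B_2}w^2$ obeys $\sup_{\bC\cap\partial B_1}|w|^2\le c(\bC)\int_{\bC\cap B_2}w^2$.

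The remaining unit-scale estimate is the heart of the matter, and the one real obstacle. Away from $\sing\bC$, say on $\{|x(z)|\ge\epsilon\}$, the operator $L_\bC=\Delta_\bC+|A_\bC|^2$ has coefficients controlled uniformly in terms of $\epsilon$ and $\sup_\Sigma|A_\Sigma|^2$, so an interior $L^2\to L^\infty$ (Moser iteration) estimate gives the bound immediately; the difficulty is purely near $\sing\bC$, where $|A_\bC|^2=|x|^{-2}|A_\Sigma|^2$ is unbounded and naive interior estimates fail. I would close this gap in one of two equivalent ways. \emph{(i) Pointwise summation of the series:} tameness forces every mode $r^{\gamma_j}\psi_j(\theta)h_{ij}(r,y)$ in \eqref{eqn:fourier0} to be bounded near $\sing\bC$ (the genuinely singular modes carry zero coefficient), hence each mode is pointwise bounded on $\bC\cap\partial B_1$ by $c(\bC,i,j)$ times its $L^2$-coefficient; summing against the eigenfunction bounds for $\{\psi_j\}$ on $\Sigma$ and for the $\beta_j$-harmonic polynomials on $S^l_+$, and using the rapid decay of the $a_i^2$ forced by convergence of \eqref{eqn:fourier} at radii slightly below $1$, produces the estimate. \emph{(ii) Compactness:} a sequence $v_k$ violating \eqref{eqn:linfty-l2-concl1}, normalized so the weighted sup is $1$ while $\int_{\bC\cap B_1}v_k^2\to0$, converges to $0$ locally uniformly on $\reg\bC\cap B_1$ by the interior estimates, so the sup must concentrate near $\sing\bC$; rescaling at the concentration points by the $\eta_{q,s}$, and using the $L^2$-growth bound to rule out $L^2$-concentration at $\sing\bC$, one extracts in the limit a nonzero tame Jacobi field (on $\bC$ or on a product $\bC_0\times\R^l$) with vanishing $L^2$ mass, which is absurd. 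In either approach the mechanism is the same: tameness excludes singular behaviour at $\sing\bC$ while the monotone quantity \eqref{eqn:fourier-inc} excludes concentration there, after which the standard elliptic machinery applies modulo the scale-invariant localization; I expect writing out the near-$\sing\bC$ analysis cleanly to be the only nontrivial part.
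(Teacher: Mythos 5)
Your proof of \eqref{eqn:linfty-l2-concl2} is correct and is essentially the paper's (the point being $2\gamma-2>-n$). For \eqref{eqn:linfty-l2-concl1}, however, there is a genuine gap, and it sits exactly where you locate "the heart of the matter." The reduction via $w(z)=|p|^{-\gamma}v(|p|z)$ rescales by the distance $|p|$ to the \emph{origin}, whereas the difficulty is the distance $|x(p)|$ to the spine $\{x=0\}$: for $p=(x_0,y_0)$ with $|x_0|$ tiny and $|y_0|\approx 1/2$ the rescaling does essentially nothing, and $q=p/|p|\in\partial B_1$ is still arbitrarily close to $\sing\bC$. Worse, the "unit-scale statement" you reduce to --- $\sup_{\bC\cap\partial B_1}|w|^2\leq c(\bC)\int_{\bC\cap B_2}w^2$ with an \emph{unweighted} supremum --- is false: since $\gamma<0$, the tame Jacobi field $w=r^\gamma\psi_1(\theta)$ (for $l\geq 1$) satisfies the $L^2$-growth hypothesis with equality yet is unbounded on $\bC\cap\partial B_1$ as $r\to 0$. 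Restoring the weight $|x|^{-\gamma}$ turns the unit-scale statement back into the lemma itself at a fixed scale, so the reduction is circular. Relatedly, the claim in your strategy (i) that tameness forces each mode $r^{\gamma_j}\psi_j h_{ij}$ to be bounded near $\sing\bC$ is wrong for the same reason (only $r^{-\gamma}$ times each mode is bounded), and both (i) and (ii) are left as sketches precisely at the near-spine step.

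The paper closes this in two lines by combining two ingredients you do not use: an interior elliptic estimate at scale $|x(p)|/2$ (not $|p|$), giving $|v(p)|^2\leq c\,r^{-n-l}\int_{\bC\cap B_{r/2}(p)}v^2$ with $r=|x(p)|$, followed by the monotonicity \eqref{eqn:fourier-inc} applied to balls centered at the \emph{spine point} $(0,y(p))$ --- legitimate by translation invariance of $\bC$ in $y$ --- which converts $r^{-n-l}\int_{\bC\cap B_r(0,y)}v^2$ into $c\,r^{2\gamma}\int_{\bC\cap B_{1/2}}v^2$. Your origin-centered growth bound $\int_{\bC\cap B_\rho}v^2\leq\rho^{n+l+2\gamma}\int_{\bC\cap B_1}v^2$ cannot substitute for this: it says nothing about concentration of $\int v^2$ near $(0,y)$ for $y\neq 0$. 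If you re-run your argument with the rescaling centered at $(0,y(p))$ and radius $|x(p)|$, and keep the weight, you recover the paper's proof; as written, the argument does not go through.
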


\begin{proof}
We prove \eqref{eqn:linfty-l2-concl1} for $\theta = 1/8$, and the statement for general $\theta$ will follow by standard elliptic estimates and an obvious covering argument.  Pick $(x, y) = (r\theta, y) \in \bC \cap B_{1/8}$ with $r > 0$.  By scale-invariant elliptic estimates and \eqref{eqn:fourier-inc}, we compute
\begin{align*}
|v(r\theta, y)|^2 
&\leq c(\bC) r^{-n-l} \int_{\bC \cap B_{r/2}(x, y)} v^2 \\
&\leq c(\bC) r^{-n-l} \int_{\bC \cap B_r(0, y)} v^2 \\
&\leq c(\bC) r^{2\gamma} \int_{\bC \cap B_{1/4}(0, y)} v^2 \\
&\leq c(\bC) r^{2\gamma} \int_{\bC \cap B_{1/2}} v^2.
\end{align*}
To prove \eqref{eqn:linfty-l2-concl2} simply use $-(n-2)/2 < \gamma < 0$.
\end{proof}

\begin{lemma}\label{lem:linear-liouville}
Let $v$ be a non-negative Jacobi field on $\bC$, with $\sup_{\bC \cap B_R} ||x|^{-\gamma} v| < \infty$ for all $R$.  Then $v = a |x|^\gamma \psi_1(\theta)$, for some constant $a$.
\end{lemma}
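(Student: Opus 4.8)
The plan is to use the Fourier expansion machinery developed just above, together with the growth bound, to kill all but the lowest mode. First I would apply the expansion \eqref{eqn:fourier0} to $v$ on each ball $\bC \cap B_R$: since $\sup_{\bC \cap B_R}||x|^{-\gamma}v| < \infty$, Lemma~\ref{lem:linfty-l2} gives $\int_{\bC \cap B_R}|x|^{-2}v^2 < \infty$, so the $\beta_j$-harmonic functions $h_j(r,y) = r^{-\gamma_j}\int_\Sigma v(r\theta,y)\phi_j(\theta)\,d\theta$ are well-defined and admit analytic expansions into homogeneous $\beta_j$-harmonic polynomials $h_{ij}$ of degree $q_{ij}\ge 0$. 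Scaling to $B_1$ and using \eqref{eqn:fourier} on all scales at once, one sees that $v$ is a \emph{locally finite} sum $v(r\theta,y) = \sum_{i,j} c_{ij} r^{\gamma_j}\psi_j(\theta) h_{ij}(r,y)$, and the $L^2$ norm over $B_\rho$ is exactly $\sum_{i,j}c_{ij}^2 \rho^{n+l+2\gamma_j + q_{ij}}$, now valid for all $\rho>0$ (after globalizing the polynomials).

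Next I would extract the consequence of the global growth bound. From Lemma~\ref{lem:linfty-l2}, estimate \eqref{eqn:linfty-l2-concl2} applied at scale $R$ gives $\int_{\bC\cap B_R}v^2 \le c(\bC) R^{n+l+2\gamma}\sup_{\bC\cap B_R}||x|^{-\gamma}v|^2$; but more to the point, I would argue that the growth hypothesis forces $\sup_{\bC\cap B_R}||x|^{-\gamma}v|$ to be \emph{bounded uniformly in $R$}. Indeed, by the $L^\infty$–$L^2$ estimate \eqref{eqn:linfty-l2-concl1} (rescaled) one has $\sup_{\bC\cap B_{R/2}}||x|^{-\gamma}v|^2 \le c(\bC) R^{-n-l-2\gamma}\int_{\bC\cap B_R}v^2$, and combined with the orthogonal expansion $R^{-n-l-2\gamma}\int_{\bC\cap B_R}v^2 = \sum_{i,j}c_{ij}^2 R^{2(\gamma_j-\gamma) + q_{ij}}$, the monotonicity noted in \eqref{eqn:fourier-inc} shows this quantity is increasing in $R$; if it were unbounded then $\sup_{\bC\cap B_{R/2}}||x|^{-\gamma}v|$ would blow up, violating the hypothesis combined with a Harnack-type lower bound. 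Therefore $\sum_{i,j}c_{ij}^2 R^{2(\gamma_j-\gamma)+q_{ij}}$ stays bounded as $R\to\infty$, which (since $\gamma=\gamma_1\le\gamma_j$ and $q_{ij}\ge 0$) forces $c_{ij}=0$ whenever $2(\gamma_j-\gamma)+q_{ij}>0$, i.e. for all terms except possibly $j=1$ (so $\gamma_j=\gamma$) with $q_{ij}=0$. The only degree-$0$ homogeneous $\beta_1$-harmonic polynomial is constant, so the sole surviving term is $v = a\,|x|^\gamma\psi_1(\theta)$.

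Finally I would use non-negativity: $v \ge 0$ together with $\psi_1 > 0$ on $\Sigma$ (the first eigenfunction, which is the one appearing in the Hardt–Simon expansion) forces $a \ge 0$, and in any case we have already shown $v = a|x|^\gamma\psi_1(\theta)$, which is the claim. (One could alternatively invoke non-negativity earlier to simplify, but it is cleaner to derive the rigidity first and observe the sign afterwards.)

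The main obstacle I anticipate is the globalization step: the expansion \eqref{eqn:fourier0}–\eqref{eqn:fourier} is stated only on $\bC\cap B_1$, so I need to check that the homogeneous $\beta_j$-harmonic polynomials $h_{ij}$ extracted on different balls $B_R$ are mutually consistent and patch together into genuine global polynomials — equivalently, that $v$ is \emph{entire-analytic} in $r^2,y$ across all scales with a single expansion — and then that the $L^2$ identity \eqref{eqn:fourier} persists for all $\rho$, not just $\rho<1$. This is routine given Simon's analyticity result but must be stated carefully, since it is exactly what converts the finitely-many-modes picture on $B_1$ into the conclusion that the growth bound annihilates every mode above the first. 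Establishing the uniform bound on $\sup_{\bC\cap B_R}||x|^{-\gamma}v|$ — or equivalently ruling out the unbounded case directly from the expansion — is the other delicate point, but the monotonicity in \eqref{eqn:fourier-inc} makes this essentially automatic.
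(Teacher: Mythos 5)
There is a genuine gap, and it sits exactly where the real content of the lemma lies. Your argument hinges on the claim that the hypothesis forces $\sup_{\bC\cap B_R}\lb||x|^{-\gamma}v\rb|$ to be bounded \emph{uniformly in $R$}: you say that if the monotone quantity $R^{-n-l-2\gamma}\int_{\bC\cap B_R}v^2$ were unbounded, then $\sup_{\bC\cap B_{R/2}}||x|^{-\gamma}v|$ would blow up, ``violating the hypothesis combined with a Harnack-type lower bound.'' But the hypothesis only asserts finiteness of this supremum for each \emph{fixed} $R$; it places no restriction whatsoever on its growth as $R\to\infty$, so a blow-up of the supremum does not contradict anything. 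Indeed, without non-negativity the conclusion is false: any Jacobi field of the form $r^{\gamma_j}\psi_j(\theta)h(r,y)$ with $h$ a nonconstant homogeneous $\beta_j$-harmonic polynomial satisfies $\sup_{\bC\cap B_R}||x|^{-\gamma}v|<\infty$ for every $R$ yet is not a multiple of $r^\gamma\psi_1$. So the uniform bound cannot come from the growth hypothesis plus monotonicity alone; it must come from $v\geq 0$, and your proposal only uses non-negativity at the very end, to remark on the sign of $a$ --- which misses the point of the lemma.

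The missing ingredient is precisely the Harnack-type estimate you wave at but never establish, and it is where the paper does its work: one projects $v$ onto the first mode to get the non-negative $\beta_1$-harmonic function $h_1(r,y)=r^{-\gamma}\int_\Sigma v\,\psi_1$, uses the weighted mean-value identity \eqref{eqn:ll-1} for globally defined non-negative $\beta$-harmonic functions to conclude $h_1(0,y)\equiv h_1(0,0)$, and then combines $v\geq 0$ with $1/c\leq\psi_1\leq c$ and interior elliptic estimates to bound $r^{-\gamma}v(r\theta,y)\leq c\,h_1(0,0)$ uniformly on all of $\bC$. Note that a standard Harnack inequality is not directly available here because of the singular set $\{r=0\}$; the weighted mean-value property of the $\beta$-harmonic projections is the substitute. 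Once that uniform bound is in hand, your endgame (boundedness of $\sum_{i,j}c_{ij}^2R^{2(\gamma_j-\gamma)+q_{ij}}$ killing all modes except $j=1$, $q_{i1}=0$) is correct and matches the paper's conclusion, and the globalization of the expansion that you flag as a concern is indeed routine. But as written the proof does not go through: you need to supply the non-negativity-based uniform bound rather than assume it.
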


\begin{proof}
We first note that if $h(r, y)$ is $\beta$-harmonic in $B_R \subset
\R^{l+1}_+$, then by (e.g.) integrating \eqref{eqn:beta-sphere}, we have the mean-value equality
\begin{equation}\label{eqn:ll-1}
h(0, 0) = \left( \int_{B_\rho^+} r^{1+\beta} drdy \right)^{-1} \int_{B_\rho^+} h(r, y) r^{1+\beta} dr dy
\end{equation}
for every $0 < \rho < R$.  Therefore, if $h$ is $\beta$-harmonic and non-negative in $\R^{l+1}_+$, we have $h(0, y) \leq h(0, y')$ for every $y, y' \in \R^l$, and hence $h(0, y) \equiv h(0, 0)$.  (In fact $h$ must be entirely constant, but we will not directly need this.)  In particular, for $h_j(r, y)$ as in \eqref{eqn:fourier0}, 
\begin{equation}\label{eqn:ll-2}
h_1(0, y) = h_1(0, 0) \quad \forall y \in \R^l.
\end{equation}

Now for every $(r, y)$, we have $v(r\theta, y) = \sum_j r^{\gamma_j} h_j(r, y) \psi_j(\theta)$ in $L^2(\Sigma)$, and hence in $L^1(\Sigma)$.  Using \eqref{eqn:ll-1} we compute for every $\rho > 0$:
\begin{align}\label{eqn:ll-3}
\int_{\bC \cap B_\rho} |x|^\gamma v \psi_1 = \int_{B_\rho^+} h_1(r, y) r^{n-1+2\gamma} dr dy = c(\bC) \rho^{n+l+2\gamma} h_1(0, 0).
\end{align}
Since $1/c(\bC) \leq \psi_1 \leq c(\bC)$ and $v \geq 0$, we can use \eqref{eqn:ll-2}, \eqref{eqn:ll-3}, and standard elliptic estimates at scale $r/2$ to deduce
\begin{align}
r^{-\gamma} v(r\theta, y) 
&\leq c r^{-n-l-\gamma} \int_{B_{r/2}(r\theta, y)} v \\
&\leq c r^{-n-l-2\gamma} \int_{B_{r/2}(r\theta, y)} |x|^\gamma v \psi_1 \\
&\leq c r^{-n-l-2\gamma} \int_{B_r(0, y)} |x|^\gamma v \psi_1 
\leq c h_1(0, y) = c(\bC) h_1(0, 0). \label{eqn:ll-4}
\end{align}

If $v(r\theta, y) \neq a r^\gamma \psi_1(\theta)$ for some constant $a$, then from the expansion \eqref{eqn:fourier0}, \eqref{eqn:fourier} and equation \eqref{eqn:linfty-l2-concl2} we must have
\[
\rho^{2\alpha}/C \leq \rho^{-n-l-2\gamma} \int_{\bC \cap B_\rho} v^2 \leq c(\bC) \sup_{\bC \cap B_\rho} |r^{-\gamma} v|^2
\]
for some $\alpha > 0$ and some constant $C$ independent of $\rho$.  For $\rho \gg 1$ this contradicts \eqref{eqn:ll-4}.
\end{proof}

\vspace{3mm}

Lastly, we will require the following ``baby'' 3-annulus-type lemma.
\begin{lemma}\label{lem:1d-3ann}
Let $\{q_i \in \R\}_{i \in \Z}$ be an increasing sequence, and $\{b_i \in \R\}_{i \in \Z}$ be arbitrary.  Fix $k \in \Z$, and $3\eps \in (0, q_{k+1} - q_k)$, and $T \geq 1/\eps$.  Define $\psi(t) = \sum_i b_i^2 e^{2q_i t}$.  Then 
\[
\psi(t+T) \geq e^{2(q_k+\eps)T} \psi(t) \implies \psi(t+2T) \geq e^{2(q_{k+1}-\eps)T} \psi(t+T).
\]
\end{lemma}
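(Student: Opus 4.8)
The plan is to split the sum $\psi(t) = \sum_i c_i e^{2q_i t}$ (writing $c_i := b_i^2 \geq 0$) into a ``low'' part $\psi_-(t) = \sum_{i \leq k} c_i e^{2q_i t}$ and a ``high'' part $\psi_+(t) = \sum_{i \geq k+1} c_i e^{2q_i t}$, so that $\psi = \psi_- + \psi_+$, and then to exploit the elementary monotonicity of each piece separately. Since every exponent appearing in $\psi_-$ is $\leq q_k$ and every exponent appearing in $\psi_+$ is $\geq q_{k+1}$, for any $s$ and any $T \geq 0$ one has
\[
\psi_-(s+T) \leq e^{2q_k T}\psi_-(s), \qquad \psi_+(s+T) \geq e^{2q_{k+1}T}\psi_+(s).
\]
We may assume all relevant values of $\psi$ are finite (divergence can only come from the $i \to +\infty$ tail, which only grows under $t \mapsto t+T$, so otherwise both sides of the implication are $+\infty$).

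Next I would feed the hypothesis $\psi(t+T) \geq e^{2(q_k+\eps)T}\psi(t)$ into the first bound to show the low part is only a small fraction of the total at time $t+T$. Using $\psi(t) \geq \psi_-(t) \geq e^{-2q_k T}\psi_-(t+T)$,
\[
\psi(t+T) \;\geq\; e^{2(q_k+\eps)T}\psi(t) \;\geq\; e^{2\eps T}\psi_-(t+T),
\]
so $\psi_-(t+T) \leq e^{-2\eps T}\psi(t+T)$, hence $\psi_+(t+T) = \psi(t+T) - \psi_-(t+T) \geq (1 - e^{-2\eps T})\psi(t+T)$.

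Finally, propagating the high part forward from $t+T$ to $t+2T$ and discarding the nonnegative low part,
\[
\psi(t+2T) \;\geq\; \psi_+(t+2T) \;\geq\; e^{2q_{k+1}T}\psi_+(t+T) \;\geq\; e^{2q_{k+1}T}(1 - e^{-2\eps T})\psi(t+T),
\]
and it remains to check $e^{2q_{k+1}T}(1-e^{-2\eps T}) \geq e^{2(q_{k+1}-\eps)T}$, i.e. $1 - e^{-2\eps T} \geq e^{-2\eps T}$, i.e. $e^{2\eps T} \geq 2$, which follows from $T \geq 1/\eps$ since then $2\eps T \geq 2 > \ln 2$.

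I do not anticipate a genuine obstacle: the whole argument is just the convexity/monotonicity of exponential sums, and the only points needing a word of care are the degenerate cases ($\psi \equiv 0$, or $\psi_-(t+T) = 0$, both of which make the conclusion immediate) and the finiteness caveat above. It is worth remarking that the gap hypothesis $3\eps < q_{k+1} - q_k$ is not actually used in the proof of the implication; its purpose is to ensure $q_{k+1} - \eps > q_k + \eps$, so that the conclusion strictly improves on the hypothesis and the lemma can be iterated across successive annuli.
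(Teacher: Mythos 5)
Your proof is correct and is essentially the paper's argument: both split the sum at index $k$, use the hypothesis to show the high modes dominate at time $t+T$ up to a factor controlled by $e^{2\eps T}$, and then propagate the high part forward, with the final numerical condition in each case reducing to $e^{2\eps T}\geq 2$, guaranteed by $T\geq 1/\eps$. The only (cosmetic) difference is bookkeeping — you discard the low part at time $t+2T$ and absorb the loss into the factor $1-e^{-2\eps T}$, whereas the paper retains it and uses $(e^{2\eps T}-1)^2\geq 1$ — and your closing remark that the gap hypothesis $3\eps<q_{k+1}-q_k$ serves only to make the lemma iterable is accurate.
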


\begin{proof}
By replacing $b_i^2$ with $b_i^2 e^{2q_i t}$, it suffices to take $t = 0$.  Observe that the first inequality implies
\[
\sum_{i \geq k+1} b_i^2 e^{2 q_i T} \geq (e^{2\eps T} - 1) \sum_{i \leq k} b_i^2 e^{2 q_i T}.
\]
Then we get
\begin{align*}
\psi(2T) &= \sum_{i} b_i^2 e^{4 q_i T}  \\
& \geq e^{2 (q_{k+1} - \eps) T} \sum_{i \geq k+1} b_i^2 e^{2 q_i T} + e^{2( q_{k+1} - \eps) T}(e^{2\eps T} - 1) \sum_{i \geq k+1} b_i^2 e^{2 q_i T} \\
&\geq e^{2 (q_{k+1} - \eps) T} \left( \sum_{i \geq k+1} b_i^2 e^{2 q_i T} + (e^{2\eps T} - 1)^2 \sum_{i \leq k} b_i^2 e^{2 q_i T} \right) \\
&\geq e^{2(q_{k+1} - \eps) T} \psi(T). \qedhere
\end{align*}
\end{proof}

\section{Barriers}

In this section we collate our functions and hypersurfaces we will use
as barriers.
We write $L_{H_\pm}$, $L_{H(\lambda)}$ for the Jacobi operator on $H_\pm$, $H(\lambda)$.

\begin{lemma}[{\cite[Proposition 2.8]{Sz21}}]\label{lem:F_a}
For any $a > \gamma$, there are functions $F_{\pm, a}$ on $H_\pm$ satisfying
\begin{gather}
F_{\pm, a}(x + \Psi_\pm(x)\nu_{\bC_0}(x)) = r^a \phi_1(\theta) \text{ for $x = r\theta \in \bC$ and $r \gg 1$}, \\
|\nabla^i F_{\pm, a}| \leq c(\bC_0, a, i) r^{a-i}, \quad \text{ and } L_{H_\pm} F_{a, \pm} \geq c(\bC_0, a)^{-1} r^{a-2}.
\end{gather}

We extend $F_{\pm a}$ to an smooth $a$-homogenous function $F_a : \R^{n+1} \setminus \{0 \} \to \R$ by setting
\[
F_a(x) = \left\{ \begin{array}{l l} \lambda^a F_{+, a}(\lambda^{-1} x) & x \in \lambda H_+ \\ r^a\phi_1 & x \in \bC_0 \setminus \{0 \} \\ \lambda^a F_{-, a}(\lambda^{-1} x) & x \in \lambda H_- \end{array} \right. , \quad \lambda > 0.
\]
Each $F_a$ satisfies
\begin{equation}\label{eqn:F_a-props}
|D^i F_a| \leq c(\bC_0, a, i) |x|^{a - i}, \quad L_{H(\lambda)}(F_a|_{H(\lambda)}) \geq |x|^{a-2}/c(\bC_0, a) .
\end{equation}
\end{lemma}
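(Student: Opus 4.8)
The plan is to follow the construction in \cite[Proposition 2.8]{Sz21}, adapted to the present notation, and build $F_{\pm,a}$ by perturbing the naive guess $r^a\phi_1(\theta)$ (pulled back to $H_\pm$ via the graph map over $\bC_0$). First I would set $f_{\pm,a}^{(0)}(x+\Psi_\pm(x)\nu_{\bC_0}(x)) = r^a\phi_1(\theta)$ on the graphical part $H_\pm\setminus B_{R_0}$, and check the key computation: since $\phi_1$ is the first eigenfunction of $L$ on $\Sigma$ with $L\phi_1 + \lambda_1\phi_1 = 0$, the function $r^a\phi_1$ satisfies $L_{\bC_0}(r^a\phi_1) = (a(a-1) + (n-1)a - \lambda_1)r^{a-2}\phi_1 = (a-\gamma_1^+)(a-\gamma_1^-)r^{a-2}\phi_1$. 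Because $a > \gamma = \gamma_1^+ > \gamma_1^-$ and $\phi_1 > 0$, this is $\geq c(\bC_0,a)^{-1}r^{a-2}$ on $\bC_0$. The graphical estimates \eqref{eqn:v-bounds} on $\Psi_\pm$ then show that transplanting onto $H_\pm$ changes $L_{H_\pm}f_{\pm,a}^{(0)}$ by a term of lower order $O(r^{a-2-\alpha_0})$, so for $r$ large the positivity of the right-hand side is preserved.

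The second step is to upgrade this to an honest function on all of $H_\pm$ (not just the graphical end) and to arrange the clean inequality $L_{H_\pm}F_{\pm,a} \geq c^{-1}r^{a-2}$ globally together with the derivative bounds $|\nabla^i F_{\pm,a}| \leq c\, r^{a-i}$. For this I would cut off $f_{\pm,a}^{(0)}$ near $B_{R_0}$ and solve a correction equation $L_{H_\pm}w = g$ on the compact region, or simply take $F_{\pm,a} = f_{\pm,a}^{(0)} + (\text{large constant})\cdot(\text{fixed positive supersolution supported near the core})$; since $H_\pm$ is smooth, strictly stable and asymptotically conical, the relevant weighted Schauder estimates on $H_\pm$ give $|\nabla^i F_{\pm,a}| \leq c(\bC_0,a,i)r^{a-i}$, and one checks $L_{H_\pm}F_{\pm,a} > 0$ everywhere by choosing constants large. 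This is essentially bookkeeping once the asymptotic computation above is in hand.

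The third step is the homogeneous extension $F_a:\R^{n+1}\setminus\{0\}\to\R$. Here I would observe that the leaves $\lambda H_\pm$ ($\lambda > 0$) together with $\bC_0$ foliate $\R^{n+1}\setminus\{0\}$ (this is precisely the Hardt--Simon foliation), so the piecewise definition is well-defined; smoothness across the cone follows because $F_{\pm,a}$ matches $r^a\phi_1$ to high order at the conical end, matching the rate at which $H_\pm$ approaches $\bC_0$, and the $a$-homogeneous rescaling is consistent with the self-similar structure $H(t) = |t|^{1/(1-\gamma)}H_{\sign(t)}$. The bounds \eqref{eqn:F_a-props} are then immediate from homogeneity: $|D^iF_a|\leq c|x|^{a-i}$ by scaling the estimates on each leaf, and $L_{H(\lambda)}(F_a|_{H(\lambda)})\geq |x|^{a-2}/c$ because $L_{H(\lambda)}$ scales with weight $-2$ under $x\mapsto\lambda x$ and the inequality holds on $H_\pm = H(\pm 1)$.

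The main obstacle I anticipate is the smoothness (and the uniformity of the constants) of the extension $F_a$ \emph{across the cone} $\bC_0\setminus\{0\}$: one must verify that the discrepancy between $F_{\pm,a}$ transplanted from $H_\pm$ and the model $r^a\phi_1$ on $\bC_0$ decays fast enough, in every derivative, relative to the $C^k$-distance between the leaf $\lambda H_\pm$ and $\bC_0$, so that the glued function is genuinely $C^\infty$ and not merely Lipschitz. This is exactly where the refined asymptotics \eqref{eqn:H-graph2}, \eqref{eqn:v-bounds} for the Hardt--Simon foliation enter, and where one leans most heavily on \cite[Proposition 2.8]{Sz21}; everything else is a routine weighted-elliptic-estimate argument.
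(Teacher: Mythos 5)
The paper does not actually prove this lemma --- its ``proof'' is the citation to \cite[Proposition 2.8]{Sz21} and \cite[Lemma 5.7]{Sz20} --- so there is no in-paper argument to compare against; your proposal is a reconstruction of the cited construction, and its core is correct. In particular the key computation is right: $L_{\bC_0}(r^a\psi_1)=(a^2+(n-2)a-\lambda_1)r^{a-2}\psi_1=(a-\gamma_1^+)(a-\gamma_1^-)r^{a-2}\psi_1$, which is bounded below by $c^{-1}r^{a-2}$ precisely when $a>\gamma=\gamma_1^+$, and the graphical estimates \eqref{eqn:v-bounds} show that transplanting to $H_\pm$ perturbs this only by lower-order terms, so positivity survives for $r\gg 1$. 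The homogeneous extension and the scaling of the bounds \eqref{eqn:F_a-props} are also handled correctly. I would add that the ``main obstacle'' you flag at the end is in fact a non-issue given how the lemma is stated: since $F_{\pm,a}$ equals $r^a\psi_1$ \emph{exactly} (not merely asymptotically) in graphical coordinates for $r\gg 1$, and each ray meets each leaf once (star-shapedness), the glued function in a conical neighborhood of $\bC_0\setminus\{0\}$ is just $r^a\psi_1$ composed with the nearest-point projection onto $\bC_0$, which is manifestly smooth; no decay-rate matching across the cone is needed.

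The one step that does not work as literally written is your treatment of the compact core of $H_\pm$. Adding a large constant $C$ contributes $L_{H_\pm}C=|A_{H_\pm}|^2C$, which may vanish where $A_{H_\pm}=0$; and a compactly supported smooth $w$ cannot satisfy $L_{H_\pm}w\geq\delta>0$ up to the edge of its support (all derivatives vanish there), so ``a fixed positive supersolution supported near the core'' with the sign you need does not exist, and in the transition annulus such a correction could destroy the positivity you already have. The routine repair is the one you mention first: since $H_\pm$ is area-minimizing, every bounded subdomain has strictly positive first Dirichlet eigenvalue for $-L_{H_\pm}$ (stability plus unique continuation), so one can solve $L_{H_\pm}w=g$ on a large ball with prescribed compactly supported $g>0$ and controlled $w$, and absorb the cutoff errors into the $r\gg1$ region where $L_{H_\pm}f^{(0)}_{\pm,a}\geq c^{-1}r^{a-2}$ dominates. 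With that substitution the argument is complete and consistent with the cited references.
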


\begin{proof}
See \cite[Proposition 2.8]{Sz21} and \cite[Lemma 5.7]{Sz20}.
\end{proof}

Write $\cM_{T_\lambda}(G)$ for the mean curvature of the graph of $G$ over $T_\lambda$, which is well-defined provided $|x|^{-1} G$ is sufficiently small.  For $G, H \in C^2(T_\lambda)$, write $D\cM_{T_\lambda}(G)[H]$ for the linearization of $\cM_{T_\lambda}$ at $G$ in the direction $H$, that is
\[
D\cM_{T_\lambda}(G)[H] = \frac{d}{dt} \Big|_{t = 0} \cM_{T_\lambda} (G + t H).
\]
Provided $|x|^{-1} G$, $\nabla G$, $|x| \nabla^ 2G$ are sufficiently small then $D \cM_{T_\lambda}(G)$ is a linear elliptic operator on $C^2(T_\lambda) \to C^0(T_\lambda)$.  Of course if $G = 0$ then $D\cM_{T_\lambda}(0) \equiv L_{T_\lambda}$ is the Jacobi operator on $T_\lambda$.

\begin{lemma}\label{lem:L_G}
For $\gamma' > \gamma$, there are constants $\eps(\bC, \gamma')$, $c(\bC, \gamma')$ so that if $G : T_1 \to \R$ is a $C^2$ function satisfying $|\nabla^i G| \leq \eps |x|^{\gamma - i}$ for $i = 0, 1, 2$, then
\[
D\cM_{T_1}(G)[F_{\gamma'}|_{T_1}]|_{(x, y)} \geq \frac{1}{c} |x|^{\gamma'-2} > 0,
\]
where $F_{\gamma'}$ is from Lemma \ref{lem:F_a}.  The same result also holds for $T_{-1}$ in place of $T_1$.
\end{lemma}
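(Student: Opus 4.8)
The plan is to reduce the claim to the linear estimate from Lemma~\ref{lem:F_a}, equation~\eqref{eqn:F_a-props}, by controlling the difference between the nonlinear linearized operator $D\cM_{T_1}(G)$ and the Jacobi operator $L_{T_1} = D\cM_{T_1}(0)$. First I would write
\[
D\cM_{T_1}(G)[F_{\gamma'}|_{T_1}] = L_{T_1}(F_{\gamma'}|_{T_1}) + \big(D\cM_{T_1}(G) - D\cM_{T_1}(0)\big)[F_{\gamma'}|_{T_1}],
\]
and recall from Lemma~\ref{lem:F_a} that $L_{T_1}(F_{\gamma'}|_{T_1}) = L_{H(1)\times\R^l}(F_{\gamma'}|_{T_1}) \geq |x|^{\gamma'-2}/c(\bC,\gamma')$ pointwise (noting that on the cylinder $T_1 = H(1)\times\R^l$ the Jacobi operator adds only the flat Laplacian in the $y$-directions, which annihilates $F_{\gamma'}$ since $F_{\gamma'}$ is pulled back from $H(1)\subset\R^{n+1}$). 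So it suffices to show the error term is bounded in absolute value by $\tfrac12 c(\bC,\gamma')^{-1}|x|^{\gamma'-2}$ once $\eps$ is small.

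The main step is an interpolation/Schauder-type estimate for the coefficients of the second-order operator $D\cM_{T_1}(G)$. Writing the graph mean curvature operator in local coordinates on $T_1$ adapted to its scale-invariant geometry (e.g. after rescaling $B_\rho\cap T_1$ to unit scale), its linearization at $G$ has the schematic form $D\cM_{T_1}(G)[H] = a^{ij}(x,\nabla G,\nabla^2 G)\,\nabla^2_{ij}H + b^i(x,\nabla G)\,\nabla_i H + e(x)H$, with $a^{ij},b^i,e$ depending smoothly on their arguments and reducing to the Jacobi data when $G=0$. Under the hypothesis $|\nabla^i G|\leq \eps|x|^{\gamma-i}$, $i=0,1,2$, and using $\gamma<0$ so that the ``natural'' scale-invariant norms of $G$ are $\leq\eps|x|^{\gamma}\leq\eps$ — small — one gets, at each point $(x,y)$, after rescaling to unit scale,
\[
|a^{ij}(G) - a^{ij}(0)| + |b^i(G)-b^i(0)| + |e(G)-e(0)| \leq c(\bC)\,\eps\,|x|^{-\text{(appropriate power)}},
\]
i.e. each coefficient difference carries an extra factor $\eps$ and the correct homogeneity. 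Since $|\nabla^i F_{\gamma'}|\leq c(\bC,\gamma')|x|^{\gamma'-i}$ for $i=0,1,2$ by Lemma~\ref{lem:F_a}, multiplying through yields $|(D\cM_{T_1}(G)-D\cM_{T_1}(0))[F_{\gamma'}|_{T_1}]| \leq c(\bC,\gamma')\,\eps\,|x|^{\gamma'-2}$, and choosing $\eps = \eps(\bC,\gamma')$ small enough absorbs this into half of the positive lower bound. The case of $T_{-1}$ is identical since $H(-1)$ enjoys the same estimates \eqref{eqn:v-bounds}, \eqref{eqn:def-Psi}.

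I expect the only real subtlety to be bookkeeping the homogeneities correctly: one must verify that the error in each coefficient of the second-derivative term scales like $|x|^{0}$ (so that after hitting $\nabla^2 F_{\gamma'}\sim|x|^{\gamma'-2}$ the product is $\sim|x|^{\gamma'-2}$), the first-derivative coefficient error like $|x|^{-1}$, and the zeroth-order error like $|x|^{-2}$ — exactly matching $L_{T_1}$'s homogeneities — and that each such error genuinely comes with a factor of $\eps$ rather than a constant. This is a routine but slightly tedious consequence of Taylor-expanding the smooth functions $a^{ij},b^i,e$ about $G=0$ and using the scale-invariant smallness $|x|^{i-\gamma}|\nabla^i G|\leq\eps$; I would package it by working on the dyadic annuli $A_{2^{k+1},2^{k-1}}$, rescaling each to unit size where the statement becomes a standard ``$C^2$-small perturbation of a fixed elliptic operator'' estimate with constants uniform in $k$ by the scale-invariance (up to $|x|^\gamma$ decay) of the geometry of $H(1)$ near infinity together with compactness on the core region.
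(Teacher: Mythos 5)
Your proposal is correct and follows essentially the same route as the paper: the paper likewise decomposes $D\cM_{T_1}(G)[H]=L_{T_1}H+a_{ij}\nabla^2_{ij}H+r^{-1}b_i\nabla_i H+r^{-2}cH$, uses the $C^3$-regularity scale of $T_1$ being comparable to $|x|$ (your dyadic rescaling) to get $|a_{ij}|+|b_i|+|c|\leq c\eps r^{\gamma-1}$, and absorbs the error into half of the lower bound from Lemma~\ref{lem:F_a}. Your homogeneity bookkeeping is exactly the content of that coefficient bound, and the observation that $\Delta_y$ annihilates $F_{\gamma'}$ is the same reduction the paper makes implicitly.
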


\begin{proof}
  Let us recall that the $C^3$-regularity scale $r_{C_3}(M, x)$ of a hypersurface
  $M\subset \mathbf{R}^N$ at a point $x\in M$ is defined to be the
  supremum of those $r
  > 0$ for which the translated and rescaled surface $r^{-1}(M-x)$ is
  the graph of a $C^3$ 
  function $u$ inside the unit ball, with $|u|_{C^3} \leq 1$.

The $C^3$ regularity scale of $T_1$ satisfies $|x|/c \leq r_{C_3}(T_1,
x, y) \leq c |x|$, and $|x| \geq 1/c$ on $T_1$, for $c = c(\bC)$. It
follows that
provided $\eps(\bC)$ is sufficiently small, at any point $(x, y) =
(r\theta, y) \in T_1$ we can write 
\[
\cM_{T_1}(G) = L_{T_1} G + r^{-1} R(x, r^{-1} G, \nabla G, r \nabla^2 G)
\]
where $R(x, z, p, q)$ is a smooth, uniformly bounded function which is quadratically controlled by $z, p, q$.  More precisely, we have the bounds
\[
|R(x, z, p, q)| \leq c (|z|^2 + |p|^2 + |q|^2) , \quad |\del_z R| + |\del_p R| + |\del_q R| \leq c (|z| + |p| + |q|).
\]

For any $H \in C^2(T_1)$, we have
\[
D\cM_{T_1}(G)[H] = \frac{d}{ds} \Big|_{s = 0} \cM_{T_1}(G + s H) = L_{T_1} H + a_{ij} \nabla^2_{ij} H + r^{-1} b_i \nabla_i H + r^{-2} c H
\]
with $a_{ij}, b_i, c$ continuous functions satisfying
\[
|a_{ij}| + |b_i| + |c| \leq c \eps r^{\gamma-1}.
\]
Taking $H = F_{\gamma'}|_{T_1}$ and using Lemma \ref{lem:F_a}, we get
\[
D\cM_{T_1}(G)[F_{\gamma'}] \geq (\frac{1}{c(\bC_0, \gamma')} - c \eps r^{\gamma-1})r^{\gamma'-2} \geq \frac{1}{2 c}r^{\gamma'-2}
\]
provided $\eps(\bC,\gamma')$ is chosen sufficiently small.  The argument for $T_{-1}$ is verbatim.
\end{proof}

\vspace{3mm}

We say a set $A \subset \R^{n+l+1}$ lies above (resp. below) $H(\lambda)\times \R^l$ if $A \subset \cup_{\mu \geq \lambda} H(\mu)$ (resp. $A \subset \cup_{\mu \leq \lambda} H(\mu)$).  More generally, if $U \subset \R^{n+1+l}$ and $S \subset U$ divides $U$ into two disjoint connected components $U_\pm$, and there are $\lambda_- < \lambda_+$ such that
\begin{align}
U_+ \cap (H(\lambda_+) \times \R^l) \neq \emptyset = U_+ \cap (H(\lambda_-) \times \R^l) \\
U_- \cap (H(\lambda_-) \times \R^l) \neq \emptyset = U_- \cap (H(\lambda_+) \times \R^l),
\end{align}
then we say $A \subset U$ lies above $S$ in $U$ (resp. below $S$ in $U$) if $A \subset \overline{U_+}$ (resp. $A \subset \overline{U_-}$).

If $S$ is a smooth hypersurface of $U$, and $\overline{S}$ divides $U$ into components $U_\pm$ as in the previous paragraph, we say $S$ has positive (resp. negative) mean curvature if the mean curvature vector of $S \cap U$ never vanishes and always points into $U_+$ (resp. into $U_-$).

\begin{theorem}[{\cite[Proposition 2.9]{Sz21}}]\label{thm:barrier}
There is a large odd integer $p$ and a constant $Q > 0$ depending only on $\bC$ so that the following holds.  Let $I$ be an open set in $\R^l$, and let $f : I \to \R$ be a $C^3$ function satisfying $|f|_{C^3(I)} \leq K$ for some $K > Q$.  Then for any $\eps < 1/Q$ there is a closed (as sets) oriented hypersurface $X_\eps$ in $\{ 0 < |x| < K^{-Q^2}, y \in I \}$ satisfying:
\begin{enumerate}
\item $X_\eps$ is $C^2$ with negative mean curvature;
\item at any point $(0, y) \in \overline{X_\eps} \cap \{ |x| = 0 , y \in I \}$, the tangent cone of $X_\eps$ at $(0, y)$ is the graph of $-\eps |x|$ over $\bC$;
\item $X_\eps$ varies continuously (in the Hausdorff distance) with $\eps$, and for every $y' \in I$ the $y$-slice $X_\eps \cap \{ y = y' \}$ is trapped between
\begin{gather}
H(\eps f(y')^p - \eps) \text{ and } H(\eps f(y')^p + \eps).
\end{gather} 
\end{enumerate}
In particular, if $V$ is a stationary varifold in $U \subset \{ y \in I, |x| < K^{-Q^2} \}$ which lies below $X_\eps$ in $U$, then $\spt V \cap X_\eps \cap U = \emptyset$.
\end{theorem}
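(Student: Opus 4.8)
The plan is to write down $X_\eps$ by hand, as a perturbed, $y$-warped copy of the Hardt--Simon foliation, built from three ingredients: (i) the foliation parameter $\mu(y):=\eps f(y)^p$, which produces the trapping between leaves in (3); (ii) an inward push of size $\sim\eps|x|$, which is responsible for the tangent cone in (2) and for the bulk of the negative mean curvature; and (iii) the supersolution $F_{\gamma'}$ of Lemma~\ref{lem:F_a}, for a suitable $\gamma'>\gamma$ and cut off near the spine, used to absorb the error terms coming from the $y$-variation of $f$ and from the nonlinearity of $\cM$. Concretely, for each fixed $y$ I would take the $y$-slice $\Sigma_\eps(y)\subset\R^{n+1}$ to be the hypersurface obtained from the Hardt--Simon leaf $H(\mu(y))$ by the normal perturbation $-\eps|x|-c\eps F_{\gamma'}|_{H(\mu(y))}$ for an appropriate $c=c(\bC,\gamma')$ --- using the graph-over-$\bC_0$ description of $H(\mu(y))$ at scales above the transition radius $\sim(\eps|f(y)|^{p})^{1/(1-\gamma)}$, and the smoothly embedded ``bottom'' of the leaf below it, patched by a fixed cutoff --- and then set $X_\eps:=\bigcup_{y\in I}\Sigma_\eps(y)\times\{y\}$, intersected with $\{0<|x|<K^{-Q^2}\}$.

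Granting this, (3) is nearly automatic. On the tiny ball $\{|x|<K^{-Q^2}\}$ the perturbation has size at most $C\eps|x|+C\eps|x|^{\gamma'}\ll\eps|x|^{\gamma}$, whereas $H(\mu\pm\eps)$ differs from $H(\mu)$ by $\sim\eps|x|^{\gamma}$ in the graphical range (and lies to the correct side of $H(\mu)$ near the spine), so $\Sigma_\eps(y)$ remains inside the slab bounded by $H(\mu(y)-\eps)$ and $H(\mu(y)+\eps)$; continuity in $\eps$ in the Hausdorff sense is immediate from the explicit continuous dependence of $\mu(y)$, of the transition radius, and of $F_{\gamma'}$ on $\eps$. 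For (2): a point $(0,y_0)\in\overline{X_\eps}$ with $|x|=0$ forces $f(y_0)=0$, since otherwise the slices near $y_0$ stay a definite distance from $\{|x|=0\}$; at such $y_0$ we have $\mu\equiv0$ and, after the cutoff, $\Sigma_\eps(y_0)$ agrees near the spine with $\graph_{\bC_0}(-\eps|x|)$, which is already a cone, so the blow-up of $X_\eps$ at $(0,y_0)$ is exactly $\graph_{\bC}(-\eps|x|)$.

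The real work is (1), the sign of the mean curvature. Writing each slice as a graph $G$ over the leaf $H(\mu(y))$, one expands $\cM_{H(\mu(y))}(G)$ as $L_{H(\mu(y))}G$ plus a quadratic remainder, plus the terms coming from $y$-differentiation --- which are $\Delta_y\mu$ and $|\nabla_y\mu|^2$ each weighted by a factor of size $\lesssim|x|^{\gamma}$ --- plus cross terms among the three ingredients; minimality of every leaf kills the leading contribution. The $-\eps|x|$ push contributes $L_{H(\mu)}(-\eps|x|)+O(\eps^2)=-\eps|x|^{-1}(n-1+|A_\Sigma|^2)+O(\eps^2)<0$, and the $-c\eps F_{\gamma'}$ push contributes at most $-c\eps|x|^{\gamma'-2}/C<0$ by Lemma~\ref{lem:F_a}, a sign that survives linearization about the small graph by Lemma~\ref{lem:L_G}. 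It remains to check that on the whole region $\{0<|x|<K^{-Q^2}\}$ the errors are dominated by these two negative terms at every scale $|x|$. Here one uses the bounds $|\nabla_y^i\mu|\le C(p)\,\eps K^p$ for $i\le 3$; this is where $p$ is taken \emph{large and odd} --- oddness so that $f^p$ is sign-preserving and smooth, largeness so that $f^p$ vanishes to high order along $\{f=0\}$, which is precisely where the slices meet the spine and where the clean tangent cone of (2) must hold. Choosing $Q$ large in terms of $\bC$, $p$, $\gamma'$ shrinks the region enough that the fixed powers of $K$ in the error are outweighed by the extra negative powers of $|x|$ supplied by $-\eps|x|^{-1}$ and $-c\eps|x|^{\gamma'-2}$, giving negative mean curvature throughout, i.e.\ (1).

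The final assertion is the one-sided strong maximum principle for varifolds against a $C^2$ barrier: if $V$ is stationary in $U\subset\{y\in I,|x|<K^{-Q^2}\}$, lies below $X_\eps$ in $U$ (so $\spt V\subset\overline{U_-}$), and $\spt V$ touched $X_\eps$ at an interior point of $U$, then --- since $X_\eps$ is $C^2$ with mean curvature vector pointing into $U_-$, the component containing $\spt V$ --- the maximum principle of \cite{SolomonWhite}, \cite{ilmanen} would force $\spt V$ to contain a neighborhood of the touching point in $X_\eps$; but $X_\eps$ is not minimal (its mean curvature never vanishes), contradicting stationarity of $V$. Hence $\spt V\cap X_\eps\cap U=\emptyset$. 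I expect the main obstacle to be the bookkeeping in (1): tracking, through the change of variables to the scaled and $y$-warped leaves and through the nonlinear remainder of $\cM$, every contribution of the $y$-derivatives of $\mu=\eps f^p$ and of the cross terms, and verifying that on the entire vanishingly small region they are beaten by the engineered terms $-\eps|x|^{-1}$ and $-c\eps|x|^{\gamma'-2}$; a secondary technical nuisance is patching the ``graph over $\bC_0$'' and ``bottom of the leaf'' descriptions across $|x|\sim(\eps|f(y)|^p)^{1/(1-\gamma)}$ without spoiling the sign in (1) or the trapping in (3).
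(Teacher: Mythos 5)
First, note that the paper does not actually prove this theorem: its ``proof'' is a citation to \cite[Proposition 2.9]{Sz21} together with the remark that for $l>1$ only cosmetic changes are needed (the functions $f, G, E$ of that proof become functions of $y\in\R^l$ and the Jacobi operator becomes $\Delta_y+L_{H(\lambda)}$). Your architecture --- a $y$-family of leaves $H(\eps f(y)^p)$ with $p$ large and odd, an inward degree-one push, a supersolution correction of the type $F_{\gamma'}$ from Lemma~\ref{lem:F_a}, the warped-product mean curvature computation of Lemma~\ref{lem:tilde-S} to handle the $y$-derivatives, and Solomon--White/Ilmanen for the final avoidance statement --- is indeed the architecture of the cited construction, so you have reconstructed the right skeleton.

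There is, however, a genuine gap precisely at the spine, and it is not merely bookkeeping. You take the correction term to be $-c\eps F_{\gamma'}$ with a coefficient that is \emph{constant in $y$}, cut off only below the transition radius $\sim(\eps|f(y)|^p)^{1/(1-\gamma)}$. At a spine point $(0,y_0)$ one has $f(y_0)=0$, so for $|y-y_0|\sim\rho$ the transition radius is $O((\eps\rho^p)^{1/(1-\gamma)})\ll\rho$; hence at scale $|x|\sim\rho$ the $F_{\gamma'}$ term is \emph{not} cut off and contributes $c\eps|x|^{\gamma'}$ with $\gamma'<0$. Since $\eps|x|^{\gamma'}/|x|\to\infty$ as $|x|\to0$, the blow-up at $(0,y_0)$ does not converge to $\graph_{\bC}(-\eps|x|)$ (conclusion (2) fails), and near the bottom of a leaf the perturbation $c\eps|x|^{\gamma'}$ can exceed the separation between $H(\mu)$ and $H(\mu\pm\eps)$, so the trapping (3) also fails; your estimate ``$C\eps|x|+C\eps|x|^{\gamma'}\ll\eps|x|^{\gamma}$'' only controls ratios, not the pointwise comparison needed at $|x|\sim|\mu(y)|^{1/(1-\gamma)}$. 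The fix --- and the reason the proof in \cite{Sz21} carries a third function $E(y)$ alongside $f$ and $G$ --- is that the coefficient of the $F_a$-correction must itself be modulated in $y$, vanishing to high order on $\{f=0\}$, so that it is subordinate both to $|x|$ and to the inter-leaf gap on each slice. A second, related soft spot: $L_{H(\mu)}(-\eps|x|)=-\eps|x|^{-1}(n-1+|A_\Sigma|^2)+\dots$ is the computation on the \emph{cone}; on the smooth bottom of a leaf $|x|$ is not known to be a supersolution, which is exactly why Lemma~\ref{lem:F_a} constructs the modified degree-one function $F_1$ on the leaves. The degree-one push must be $-\eps F_1$ (suitably normalized so that its trace on $\bC$ is $-\eps|x|$), not literally $-\eps|x|$.
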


\begin{proof}
This is proved with $l = 1$ in \cite[Proposition 2.9]{Sz21}.  When $l > 1$, the functions $f(y)$, $G(y)$, $E(y)$ (as defined in the proof \cite{Sz21}) becomes functions on $\R^{l}$, and so bounds on $G^{(i)}$ become bounds on $D^i G$, and the Jacobi operator $L_{H(\lambda)\times\R^l}$ on $H(\lambda) \times \R^l$ becomes $\Delta_y + L_{H(\lambda)}$.  Otherwise the same proof carries over with only cosmetic changes.
\end{proof}

We shall also need the following computation.
\begin{lemma}\label{lem:tilde-S}
Let $S$ be a $C^2$ hypersurface in $\R^{n+1}$, and $f : \R^l \to \R_+$ a $C^2$ function.  Define the new hypersurface $\tilde S \subset \R^{n+1+l}$ by
\[
\tilde S = \bigcup_{y \in \R^l} (f(y) S) \times \{y\}.
\]
At any point $z = ( f(y) x, y) \in \tilde S$, let $\nu$ be a choice of normal for $S$ at $x$, and let $\tilde \nu$ be the normal of $\tilde S$ at $z$ pointing in the same direction as $\nu$.  Then the mean curvature $\cM_{\tilde S}$ of $\tilde S$ with respect to $\tilde \nu$ at $z$ can be expressed as
\begin{align*}
\cM_{\tilde S} &= \frac{1}{\sqrt{E}}\left[ \frac{\cM_S}{f} + \frac{|Df|^2 h_S(x^T, x^T)}{f E} + (x\cdot \nu)\left(-\delta_{\alpha \beta} + \frac{(x\cdot \nu)^2 D_\alpha f D_\beta f}{E} \right) D^2_{\alpha \beta} f \right].
\end{align*}
where $E = 1+|Df|^2 (x \cdot \nu)^2$, and $\cM_S$ is the mean curvature of $S$, and $h_S$ is the second fundamental form of $S$.
\end{lemma}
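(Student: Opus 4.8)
The computation is a direct one, carried out in local coordinates adapted to $S$. Let $S$ be parametrized locally by $x = x(u)$, $u = (u^1,\dots,u^n)$, so that $\tilde S$ is parametrized by $(u,y) \mapsto (f(y)x(u), y)$. I would first write down the tangent vectors: $\partial_{u^i} = (f\, x_i, 0)$ where $x_i = \partial_{u^i}x$, and $\partial_{y^\alpha} = (D_\alpha f\, x, e_\alpha)$ where $e_\alpha$ is the standard basis vector of $\R^l$. From these one reads off the first fundamental form of $\tilde S$ in block form: the $uu$-block is $f^2 g_{ij}$ (with $g_{ij}$ the metric on $S$), the $uy$-block is $f\, D_\alpha f\, (x\cdot x_i)$, and the $yy$-block is $\delta_{\alpha\beta} + D_\alpha f\, D_\beta f\, |x|^2$. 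The determinant and inverse of this matrix can be computed using the block-matrix formula (Schur complement); the scalar $E = 1 + |Df|^2 (x\cdot\nu)^2$ will emerge naturally because $|x|^2 = |x^T|^2 + (x\cdot\nu)^2$ and the $x^T$-part gets absorbed into the Schur complement of the $uu$-block.

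Next I compute the unit normal $\tilde\nu$ to $\tilde S$. Since $\tilde\nu$ must be orthogonal to all $\partial_{u^i}$ and $\partial_{y^\alpha}$, and since $\nu \perp x_i$ on $S$, a short calculation shows $\tilde\nu$ is a normalization of $(\nu, -(x\cdot\nu) Df)$ — the $\R^{n+1}$-component is proportional to $\nu$ and the $\R^l$-component is forced by orthogonality to the $\partial_{y^\alpha}$ directions. The normalizing constant is exactly $\sqrt{E}$, which explains the overall $1/\sqrt E$ prefactor in the statement. Then $\cM_{\tilde S} = \tilde g^{AB} \langle \partial_A\partial_B(\text{position}), \tilde\nu\rangle$ where $A,B$ range over all of $(u,y)$. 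The second derivatives of the position map are: $\partial_{u^i}\partial_{u^j} = (f\, x_{ij}, 0)$, $\partial_{u^i}\partial_{y^\alpha} = (D_\alpha f\, x_i, 0)$, and $\partial_{y^\alpha}\partial_{y^\beta} = (D^2_{\alpha\beta} f\, x, 0)$. Pairing each against $\tilde\nu$: the $uu$-term gives $f\, h_S(x_i,x_j)$-type contributions (here $h_S$ enters via $\langle x_{ij},\nu\rangle$), the $uy$-term gives $D_\alpha f\, \langle x_i,\nu\rangle = 0$ so it contributes nothing through the normal's $\nu$-part but does contribute through... actually $\langle x_i, \nu\rangle = 0$, so the mixed second derivative is normal-orthogonal and drops out, and the $yy$-term gives $D^2_{\alpha\beta}f\,(x\cdot\nu)$.

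The final step is bookkeeping: contract these three pieces with the corresponding blocks of $\tilde g^{AB}$. The $uu$-block of the inverse metric, to leading order $f^{-2}g^{ij}$ plus a correction from the Schur complement, produces both the $\cM_S/f$ term (from the trace of $h_S$ against $g^{ij}$) and the $|Df|^2 h_S(x^T,x^T)/(fE)$ term (the correction, since the tangential projection $x^T$ is what couples to $Df$). The $yy$-block of the inverse metric, which is $(\delta_{\alpha\beta} - (x\cdot\nu)^2 D_\alpha f D_\beta f/E)$ after inverting the Schur-reduced $y$-block, contracts with $D^2_{\alpha\beta}f\,(x\cdot\nu)$ to give the last term. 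I expect the main obstacle to be purely organizational rather than conceptual: correctly inverting the block first fundamental form and tracking how $|x|^2 = |x^T|^2 + (x\cdot\nu)^2$ splits between the $uu$- and $yy$-contributions, so that the $E$'s land in the right places and the cross terms cancel. No step requires anything beyond the definitions of the first and second fundamental forms and linear algebra.
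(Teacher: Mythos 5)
Your proposal is correct and follows essentially the same route as the paper's proof: parametrize $\tilde S$ by $(u,y)\mapsto (f(y)x(u),y)$, compute the block first fundamental form and its inverse, identify $\tilde\nu$ as the normalization of $(\nu,-(x\cdot\nu)Df)$, observe that the mixed second fundamental form components vanish, and trace. The only cosmetic difference is that the paper normalizes $g_{ij}=\delta_{ij}$ at the point of computation rather than carrying $g^{ij}$ through the Schur-complement bookkeeping.
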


\begin{proof}
Let $F(x^1, \ldots, x^n) : U \subset \R^n \to \R^{n+1}$ be a coordinate chart for $S$.  Let $g_{ij} = \del_i F \cdot \del_j F$ be the induced metric in these coordinates, and $h_{ij} = - \del^2_{ij} F \cdot \nu$ the second fundamental form.  WLOG let us assume $g_{ij}(0) = \delta_{ij}$.

Define coordinate chart $\tilde F(x^1, \ldots, x^n, y^1, \ldots, y^l) : U \times \R^l \to \R^{n+l+1}$ for $\tilde S$ by $\tilde F(x, y) = (f(y) F(x), y)$.  We have
\[
\del_i \tilde F = (f \del_i F, 0), \quad \del_\alpha \tilde F = ( (\del_\alpha f) F, e_\alpha)
\]
where we abbreviate $\del_i \equiv \frac{\del}{\del x^i}$, $\del_\alpha \equiv \frac{\del}{\del y^\alpha}$, and write $\{e_\alpha\}$ for the standard basis vectors of $\R^l$.  The metric $\tilde g$ in these coordinates at $(x, y) = (0, 0)$ is therefore
\[
\tilde g_{\alpha \beta} = \delta_{\alpha \beta} + (\del_\alpha f) (\del_\beta f) |F|^2, \quad \tilde g_{\alpha i} = f(\del_\alpha f) (F \cdot \del_i F) , \quad \tilde g_{ij} = f^2 \delta_{ij}.
\]
One can verify directly that the metric inverse at $(x, y) = (0, 0)$ is then
\begin{gather*}
\tilde g^{\alpha \beta} = \delta_{\alpha \beta} - \frac{(F \cdot \nu)^2}{E} (\del_\alpha f) (\del_\beta f) , \quad \tilde g^{\alpha i} = -\frac{ (\del_\alpha f) (F \cdot \del_iF)}{f E} \\
\tilde g^{ij} = f^{-2} \delta_{ij} + \frac{ |D f|^2 (F \cdot \del_i F) (F \cdot \del_jF)}{f^2 E} 
\end{gather*}
where $E = 1+|Df|^2 (F \cdot \nu)^2$.  

We have
\[
\del_i \del_j \tilde F = (f \del_i \del_j F, 0), \quad \del_i \del_\alpha \tilde F = (\del_\alpha f \del_i F, 0), \quad \del_\alpha \del_\beta \tilde F = (\del_\alpha \del_\beta f F, 0).
\]
Therefore, since by inspection $\tilde \nu = E^{-1/2}(\nu, - (\del_\alpha f)(F\cdot \nu) e_\alpha)$ we can compute the second fundamental form of $\tilde S$ to be
\[
\tilde h_{\alpha \beta} = \frac{-(\del^2_{\alpha \beta} f) (F \cdot \nu)}{\sqrt{E}}, \quad \tilde h_{\alpha i} = 0, \quad \tilde h_{ij} = \frac{ f h_{ij}}{\sqrt{E}}.
\]

We deduce that, at $(x, y) = (0, 0)$, we have 
\begin{align*}
\cM_{\tilde S}
&= \tilde g^{\alpha \beta} \tilde h_{\alpha \beta} + \tilde g^{ij} \tilde h_{ij} \\
&= \left(\delta_{\alpha \beta} - \frac{(F\cdot \nu)^2 D_\alpha f D_\beta f}{E}\right)\left(\frac{-D^2_{\alpha \beta} f (F\cdot \nu)}{\sqrt{E}} \right) \\
&\quad + f^{-2} \left(\delta_{ij} + \frac{|Df|^2 (F \cdot \del_i F) (F\cdot \del_j F)}{E}\right) \frac{f h_{ij}}{\sqrt{E}} 
\end{align*}
which, recalling that $g_{ij} = \delta_{ij}$ at $(0, 0)$ is the form required.
\end{proof}

\section{Non-concentration}

For shorthand let us write $T_\lambda = H(\lambda) \times \R^l$, so we
also have $T_0 = \bC$. We
define the following notion of ``distance from $T_\lambda$''.  This is
effectively a non-linear version of the norm $\sup_U ||x|^{-\gamma} u|$,
see Corollary \ref{cor:nc}. 
\begin{definition}
Given subsets $M, U \subset B_1$, $\lambda \in \R$, define $D_{T_\lambda}(M; U)$ as the least $d \geq 0$ such that $M \cap U$ is trapped between $H(\lambda \pm d) \times \R^l$.
\end{definition}

The following follows directly from the definition:
\begin{equation}\label{eq:triangle}
  D_{T_\lambda}(M; U) \leq D_{T_{\lambda'}}(M; U) + |\lambda -
  \lambda'|. 
\end{equation}

Note that $D$ scales like $\rho^{1-\gamma}$ in the sense that for $c >
0$ we have
\[ D_{c T_\lambda}(cM ; cU) = c^{1-\gamma} D_{T_\lambda}(M; U). \]
  We define a scale-invariant ``excess'' quantity which will be our
  main mechanism for measuring decay/growth.
\begin{definition}
Given $R > 0$, $\lambda \in \R$, and subset $M \subset B_R$, define the excess of $M$ in $B_R$ w.r.t. $T_\lambda$ to be
\[
E(M, T_\lambda, R) = D_{R^{-1} T_\lambda}(R^{-1} M; B_1) \equiv R^{\gamma - 1}D_{T_\lambda}(M; B_R).
\]
\end{definition}
We remark that $E(T_\lambda, \bC, R) = R^{\gamma-1}|\lambda|$ and that
for $M$ that is the graph of $u$ over $T_\lambda$ we can think of
$E(M, T_\lambda, R)$ as equivalent to 
$\sup_{B_R} R^{\gamma-1} \big| |x|^{-\gamma} u\big|$.

\vspace{3mm}

\begin{lemma}\label{lem:small-H-graph}
If $d = D_{T_\lambda}(M; U) \leq \beta |\lambda|$ for some $\beta(\bC)$ sufficiently small, then $M \cap U$ is trapped between the graphs $\graph_{T_\lambda}(\pm c(\bC) d |x|^\gamma)$.  Conversely, if $M \cap U$ is trapped between $\graph_{T_\lambda}(\pm d |x|^\gamma)$ and $d \leq \beta |\lambda|$, then $D_{T_\lambda}(M; U) \leq c(\bC) d$.
\end{lemma}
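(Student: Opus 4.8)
The plan is to pass between the nonlinear "trapping between leaves" description of $D_{T_\lambda}$ and the linear graphical description by exploiting the structure of the Hardt-Simon foliation $\{H(\mu)\}$ near $\bC_0$, uniformized over the $\R^l$ factor. The key computation is to understand, for $|\mu - \lambda|$ small relative to $|\lambda|$, how the normal graph function of $H(\mu)$ over $H(\lambda)$ behaves. From \eqref{eqn:def-Psi} we have $H(\mu)\setminus B_{|\mu|^{1/(1-\gamma)}R_0} = \graph_{\bC_0}(\Psi_\mu)$ with $\Psi_\mu(x) = \mu r^\gamma\psi_1(\theta) + O(|\mu|^{1+\alpha_0/(1-\gamma)}r^{\gamma-\alpha_0})$, and near the vertex $H(\mu)$ is uniformly smooth at scale $\sim |\mu|^{1/(1-\gamma)}$; differentiating the foliation in $\mu$ (as in Lemma~\ref{lem:Phi}) one sees $\partial_\mu H(\mu)$ is a positive Jacobi field on $H(\mu)$ comparable to $r^\gamma$, with $\psi_1 \in [1/c,c]$. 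Hence the leaf-to-leaf normal graph function of $H(\mu)$ over $H(\lambda)$ is $(\mu-\lambda)w_\lambda(x)$ for a positive function $w_\lambda$ with $w_\lambda(x) \in [ |x|^\gamma/c(\bC),\, c(\bC)|x|^\gamma]$, uniformly for $|\mu-\lambda|\le\beta|\lambda|$ once $\beta(\bC)$ is small enough. Crossing with $\R^l$ changes nothing since $T_\mu = H(\mu)\times\R^l$.

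For the forward direction, suppose $d = D_{T_\lambda}(M;U)\le\beta|\lambda|$. By definition $M\cap U$ lies between $T_{\lambda-d}$ and $T_{\lambda+d}$, and the above shows these two leaves are precisely $\graph_{T_\lambda}(-d\,w_\lambda)$ and $\graph_{T_\lambda}(+d\,w_\lambda)$ (the sign convention fixed by the compatible orientation $\nu_{H(\mu)}\to\nu_{\bC_0}$). Since $w_\lambda(x)\le c(\bC)|x|^\gamma$, the region between these leaves is contained in the region between $\graph_{T_\lambda}(\pm c(\bC)d|x|^\gamma)$, giving the first claim. One mild technical point: near the vertex $r\lesssim|\lambda|^{1/(1-\gamma)}$ the graphical description over $T_\lambda$ degenerates, but there the leaves $T_{\lambda\pm d}$ are all within distance $\sim|\lambda|^{1/(1-\gamma)}\lesssim d^{1/(1-\gamma)}$ of each other while $|x|^\gamma\sim|\lambda|^{\gamma/(1-\gamma)}$ is of comparable size after multiplying by $d$, so the trapping statement still holds there after enlarging $c(\bC)$; this is where one must be slightly careful.

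For the converse, suppose $M\cap U\subset\graph_{T_\lambda}(\pm d|x|^\gamma)$ with $d\le\beta|\lambda|$. Using $w_\lambda(x)\ge|x|^\gamma/c(\bC)$, the graph $\graph_{T_\lambda}(+d|x|^\gamma)$ lies below $\graph_{T_\lambda}(c(\bC)d\,w_\lambda) = T_{\lambda+c(\bC)d}$ (the leaf being a genuine graph with the larger normal displacement), and symmetrically $\graph_{T_\lambda}(-d|x|^\gamma)$ lies above $T_{\lambda-c(\bC)d}$; near the vertex the same comparison of scales as above applies. Hence $M\cap U$ is trapped between $T_{\lambda\pm c(\bC)d}$, so by definition $D_{T_\lambda}(M;U)\le c(\bC)d$.

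The main obstacle is not any single estimate but the uniformity: establishing that the leaf-to-leaf normal graph function $w_\lambda$ is two-sidedly comparable to $|x|^\gamma$ with constants depending only on $\bC$ (not on $\lambda$), uniformly down to the natural scale $|\lambda|^{1/(1-\gamma)}$ of the vertex region. This follows by scaling the statement to $\lambda = \pm 1$ — where it is exactly the content of Lemma~\ref{lem:Phi} together with \eqref{eqn:v-bounds} and the positivity coming from star-shapedness of $H_\pm$ — and then using the scaling $D_{cT_\lambda}(cM;cU) = c^{1-\gamma}D_{T_\lambda}(M;U)$ to transfer back; the vertex-region bookkeeping described above is the only genuinely fiddly part.
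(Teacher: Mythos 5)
Your proposal is correct and follows essentially the same route as the paper: rescale to reduce to $\lambda = \pm 1$ and then read off the two-sided comparison of the leaf-to-leaf graph function with $|x|^\gamma$ from Lemma~\ref{lem:Phi}. (The only slip is the aside $|\lambda|^{1/(1-\gamma)} \lesssim d^{1/(1-\gamma)}$, which is backwards since $d \le \beta|\lambda|$; it is immaterial because, as you note, after rescaling the vertex region is handled by Lemma~\ref{lem:Phi} itself, where $|x| \ge 1/c$ on $H_\pm$.)
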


\begin{proof}
By scale-invariance it suffices to consider the case when $\lambda = \pm 1$, in which case the Lemma follows straightforwardly from Lemma \ref{lem:Phi}.
\end{proof}

\vspace{3mm}

The main Theorem of this Section is the following non-concentration result.  We emphasize that $c_0$ in \eqref{eqn:nc-concl1}, \eqref{eqn:nc-concl2} is independent of $s$.

\begin{theorem}[Non-concentration]\label{thm:nc}
Given any $s \in (0, 1/2]$ and $\theta \in (0, 1)$, there are constants $c_0(\bC, \theta)$, $r_0(\bC, \theta, s)$, $\delta_0(\bC, \theta, s)$ so that the following holds.  Let $M$ be a complete minimal hypersurface in $B_1$, such that $D_{T_\lambda}(M; B_1) < \delta_0$ for $|\lambda| < \delta_0$, and $M \cap B_1 \cap \{ r \geq r_0\}$ is trapped between $\graph_{T_\lambda}(\pm b |x|^\gamma)$ for $b < \delta_0$.  Then
\begin{equation}\label{eqn:nc-concl1}
D_{T_\lambda}(M; B_{\theta}) \leq c_0(b + s D_{T_\lambda}(M; B_1))
\end{equation}
If $B_1$ is replaced by $A_{1,\rho}$ in our assumptions, for some $\rho \in (0, 1/2]$, then instead we get
\begin{equation}\label{eqn:nc-concl2}
D_{T_\lambda}(M; A_{\theta, \theta^{-1}\rho}) \leq c_0(b + s D_{T_\lambda}(M; A_{1, \rho})).
\end{equation}
(with $c_0, r_0, \delta_0$ depending on $\rho$ also).
\end{theorem}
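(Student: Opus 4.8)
The plan is to reduce the non-linear statement to the linear result for $\beta$-harmonic functions / Jacobi fields via a compactness-contradiction argument. First I would set up the contradiction: suppose the conclusion \eqref{eqn:nc-concl1} fails for some fixed $\theta, s$, so there is a sequence of minimal hypersurfaces $M_k \subset B_1$ with $\delta_k := D_{T_{\lambda_k}}(M_k; B_1) \to 0$, $|\lambda_k| < 1/k$, the surfaces trapped between $\graph_{T_{\lambda_k}}(\pm b_k |x|^\gamma)$ in $\{r \geq 1/k\}$ with $b_k \to 0$, yet $D_{T_{\lambda_k}}(M_k; B_\theta) > c_0(b_k + s\delta_k)$ with $c_0 \to \infty$ (along a subsequence we may take $\delta_0$ and $r_0$ to realize their worst cases, i.e.\ $\delta_0 \to 0$, $r_0 \to 0$). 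The key normalization is to divide by $d_k := b_k + \delta_k$ (or just by $D_{T_{\lambda_k}}(M_k; B_1)$, with the trapping ratio $b_k/\delta_k$ staying bounded after passing to a subsequence — if $b_k/\delta_k \to \infty$ the hypothesis already forces the conclusion since then $D_{T_{\lambda_k}}(M_k;B_\theta) \le D_{T_{\lambda_k}}(M_k;B_1) = \delta_k = o(b_k)$).

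Next I would extract the blow-up limit. Away from $\{r = 0\}$, by standard elliptic estimates (using that $M_k$ is trapped in an increasingly thin neighborhood of $\bC$ and $\lambda_k \to 0$), $M_k$ is eventually a graph $\graph_\bC(u_k)$ with $|x|^{-\gamma}u_k \to 0$; here I want to invoke Lemma \ref{lem:small-H-graph} to convert the $D$-trapping into graphical bounds $\sup ||x|^{-\gamma} u_k| \le c(\bC)\delta_k$ on compact subsets of $\bC \cap B_1 \setminus \{r = 0\}$. Rescaling $v_k := u_k / d_k$, the $v_k$ satisfy a linear elliptic equation converging to the Jacobi equation $L_\bC v = 0$, so a subsequence converges in $C^2_{loc}(\bC \cap B_1 \setminus \{r=0\})$ to a Jacobi field $v$. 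The integrability hypothesis $\int_{\bC \cap B_1} |x|^{-2} v^2 < \infty$ needs to be checked — this is where the $r_0 \to 0$ part of the hypothesis (trapping all the way down to $r \ge r_0$) and Lemma \ref{lem:linfty-l2-concl2} combine to give a uniform $L^2$ bound that survives the limit, so that $v$ admits the Fourier expansion \eqref{eqn:fourier0}.

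Then I would identify $v$ with the lowest mode. The trapping between $\graph_{T_{\lambda_k}}(\pm b_k|x|^\gamma)$ for $r \ge r_0$, after rescaling, forces $|v| \le C |x|^\gamma$ near $r=0$ in the limit (i.e.\ $v$ is "tame" in the sense of Lemma \ref{lem:linfty-l2}), so by the structure of the expansion, $v = a|x|^\gamma \psi_1(\theta) + (\text{higher order terms})$, but actually more is true: the limit $v$ has $\sup_{\bC\cap B_1}||x|^{-\gamma}v| \le 1$ (normalized), and since the normalization was by the full $B_1$-quantity, on $B_1$ the sup of $|x|^{-\gamma}|v|$ is comparable to $1$. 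The crucial point is that the $a|x|^\gamma\psi_1$ piece corresponds exactly to an infinitesimal push along the Hardt-Simon foliation — this is the content of Lemma \ref{lem:Phi}, that $\frac{d}{d\lambda} H(\lambda)$ is the Jacobi field $(1-\gamma)r^\gamma\psi_1(\theta) + O(r^{\gamma-\alpha_0})$ — and hence can be absorbed by adjusting $\lambda_k$, i.e.\ it does not contribute to $D_{T_{\lambda_k}}(M_k;\cdot)$ to leading order. So in the blow-up, the relevant quantity $D_{T_{\lambda_k}}(M_k;B_\theta)/d_k$ converges (up to constants, via Lemma \ref{lem:small-H-graph} and the scaling of $D$) to $\sup_{\bC \cap B_\theta} ||x|^{-\gamma}(v - a|x|^\gamma\psi_1)|$ for the appropriate $a$, while the hypothesis $D_{T_{\lambda_k}}(M_k;B_\theta)/d_k \to \infty \cdot (b_k/d_k + s)$ should be contrasted against the \emph{bounded} $B_1$-normalization. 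Since $c_0 \to \infty$ this is immediately contradictory once we know the limit object has bounded $B_\theta$-size in terms of its $B_1$-size — which is just interior elliptic estimates for the Jacobi operator (Lemma \ref{lem:linfty-l2}), giving $\sup_{\bC\cap B_\theta}||x|^{-\gamma}v| \le c(\bC,\theta)(\int_{\bC\cap B_1}v^2)^{1/2} \le c(\bC,\theta)$.

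The main obstacle I anticipate is handling the region near $\{r = 0\}$ rigorously: the hypotheses only control $M_k$ for $r \ge r_0$ with $r_0 \to 0$, and near $r = 0$ the surface $M_k$ could a priori behave badly (it is a codimension-one minimal surface near the singular spine $\{0\}\times\R^l$ of $\bC$). One needs the trapping between $H(\lambda_k \pm \delta_k)\times\R^l$ on \emph{all} of $B_1$ (not just $r \ge r_0$) — this is exactly why $D_{T_{\lambda_k}}(M_k;B_1) < \delta_0$ is a hypothesis separate from the graphical trapping for $r \ge r_0$ — to get the $L^2$ estimate \eqref{eqn:linfty-l2-concl2} up to the spine and to ensure the blow-up limit $v$ is a genuine Jacobi field on all of $\bC\cap B_1$ with the integrability needed for the Fourier expansion; controlling the measure-theoretic convergence $M_k \to \bC$ near the spine (e.g.\ via the monotonicity formula and the density hypothesis, ruling out concentration of mass on $\{r=0\}$) is the technical heart. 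The annular version \eqref{eqn:nc-concl2} follows by the identical argument: the blow-up limit is then a Jacobi field on $\bC \cap A_{1,\rho}$, one uses the analogous interior estimate on the annulus, and the dependence of constants on $\rho$ enters through the elliptic estimates near the two boundary spheres and the spine within $A_{1,\rho}$.
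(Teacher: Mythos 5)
Your approach has a genuine gap, and it stems from the quantifier structure of the statement: the whole content of the theorem is that $c_0$ depends only on $\bC,\theta$ and \emph{not} on $s$ (the paper emphasizes this explicitly just before the statement, and Corollary \ref{cor:nc}(3) needs it in order to send $s \to 0$ after $i \to \infty$). Your contradiction setup fixes $s$ and lets $c_0 \to \infty$ along the sequence. But with $s$ fixed the statement is trivially true with $c_0 = 1/s$: since $B_\theta \subset B_1$ one always has $D_{T_\lambda}(M;B_\theta) \leq D_{T_\lambda}(M;B_1) \leq s^{-1}\cdot s\, D_{T_\lambda}(M;B_1)$. So the sequence you posit does not exist, and the argument proves only the useless $s$-dependent constant. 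To capture the real statement you would have to let $s = s_k \to 0$ along the contradicting sequence, and then the compactness argument collapses at exactly the point you flag as "the technical heart": the blow-up limit $v = \lim u_k/d_k$ lives on $\bC \cap B_1 \cap \{r>0\}$ and controls $D_{T_{\lambda_k}}(M_k; B_\theta \cap \{r \geq \rho\})$ only for each \emph{fixed} $\rho > 0$, with no uniformity as $\rho \to 0$; the quantity $D_{T_{\lambda_k}}(M_k;B_\theta)$ could be realized by points of $M_k$ at $r = o(1)$, which are invisible in the graphical limit. Monotonicity/Allard rule out concentration of \emph{mass} on the spine, but $D$ is a sup-type (Hausdorff-distance) quantity, and mass non-concentration gives no pointwise control of how far $M_k$ strays from $T_{\lambda_k}$ near $\{r=0\}$. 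Indeed, the statement you would need to close the compactness argument is essentially Corollary \ref{cor:nc}(3), whose proof in the paper \emph{uses} Theorem \ref{thm:nc}; the reasoning is circular.

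The paper's actual proof is a direct barrier and maximum-principle argument with no compactness. It splits into two regimes according to whether $d + s^{-1}b$ is larger or smaller than $\beta|\lambda|$. In the first regime it slides the negatively-curved barrier hypersurfaces $X_t$ of Theorem \ref{thm:barrier} (built from the Hardt--Simon foliation via a function $f(y) = \sigma(t^{-1}\lambda + h(y))$ with $h(y) = (\theta - |y|)^{-1}$); in the second it builds graphical supersolutions $G_t$ over $T_\lambda$ out of the foliation vector field $\Phi_{\eps,\pm}$ of Lemma \ref{lem:Phi} and the supersolution $F_{\gamma'}$ of Lemma \ref{lem:F_a}, and applies the Solomon--White maximum principle. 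The factor $s$ with an $s$-independent constant arises from the geometry of the cutoff: on the lateral boundary $\{|y| = \theta - s\}$ one has $h \geq 1/s$, so the barrier at parameter $t_1 \sim b + sd$ sits at height $\gtrsim d$ there and dominates $M$, while on the interior region $h \leq c(\theta)$ brings the barrier down to height $c(\theta)(b+sd)$. This mechanism has no analogue in a blow-up argument, and I would redo the proof along these lines.
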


\begin{proof}[Proof of Theorem \ref{thm:nc}] For ease of notation
  write $d = D_{T_\lambda}(M; B_1)$.  We need to break the proof into
  two cases, depending on whether $d \gtrsim |\lambda|$ (when $M$ is
  about as close to $\bC$ as it is to $T_\lambda$), or whether $d <<
  |\lambda|$ (when $M$ is much closer to $T_\lambda$ than to $\bC$).
  In the first case we will use the barrier surfaces constructed in
  Theorem \ref{thm:barrier}.  In the second case we will construct
  barrier surfaces as graphs over $T_\lambda$.  At the end of the proof
  we will explain the (very minor) changes required to get
  \eqref{eqn:nc-concl2}. 

Fix $\gamma < \gamma' < \min\{ \gamma+1/2, 0\}$.  Throughout the proof
\[
1/2 \geq \beta(\bC, \theta, \gamma') \gg r_0(\bC, \theta, \beta, \gamma', s) \gg \delta_0(\bC, \theta, \beta, \gamma', s, r_0)
\]
are small constants which we shall choose as we proceed, but can a posteriori be fixed. 

We first claim that $D_{T_\lambda}(M; B_1 \cap \{ |x| \geq r_0 \})
\leq c(\bC, \beta) b$.  If $b \leq \beta |\lambda|$ this follows from
Lemma \ref{lem:small-H-graph}, provided $\beta(\bC)$ is sufficiently
small.  Suppose now $b > \beta |\lambda|$.  Then provided
$\delta_0(\bC, r_0)$ is sufficiently small,  $M \cap B_1 \cap \{ |x|
\geq r_0\}$ is trapped between the graphs of $\pm c(\bC) (b + |\lambda|) |x|^\gamma$ over $\bC \cap \{ |x| \geq r_0/2\}$ in $\{ |x| \geq r_0/2\}$, and hence trapped between the graphs of $\pm c(\bC) (b/\beta) |x|^\gamma$ over $\bC \cap \{ |x| \geq r_0/2\}$.  But then provided $\delta_0(\bC, r_0, \beta)$ is sufficiently small, $\graph_{\bC \cap \{ |x| \geq r_0/2\}}( c(\bC) (b/\beta) |x|^\gamma)$ is trapped between $H(\pm c(\bC) b/\beta) \times \R^l$ in $\{ |x| \geq r_0/2\}$.  Combined with the inequality $|\lambda| \leq b/\beta$, our initial claim follows.

We shall henceforth work towards proving the estimate
\begin{equation}\label{eqn:nc-1}
D_{T_\lambda}(M; \{ |x| \leq r_0 \} \cap \{ |y| \leq \theta^2 \}) \leq c(\bC, \beta, \theta)(b + s d).
\end{equation}
Provided $r_0(\bC)$ is sufficiently small, \eqref{eqn:nc-1} combined with our initial claim will imply \eqref{eqn:nc-concl1} (with $\theta^2$ in place of $\theta$).  For ease of notation let us define the domains
\[
\Omega_1 = \{ |x| \leq r_0 \} \cap \{ |y| \leq \theta-s \}, \quad \Omega_2 = \{ |x| \leq r_0 \} \cap \{ |y| \leq \theta^2 \}.
\]
We now break into two cases as outlined at the start of the proof.

\vspace{3mm}

\textbf{Case 1:} $\bar d := d + s^{-1} b > \beta |\lambda|$.  Here we use the barriers constructed in Theorem \ref{thm:barrier}.  Note first that $\bar d \leq 2 \delta_0/s$, so by ensuring $\delta_0(\bar d, s)$ is small, we can assume $\bar d$ is small also.

Fix $p$ as in Theorem \ref{thm:barrier}, and fix $\sigma : \R \to \R$ a smooth function satisfying $|\sigma(z)^p - z| \leq 1/10$.  Define
\begin{equation}\label{eqn:nc-2}
f(y) = \sigma(t^{-1} \lambda + h(y)), \quad h(y) = (\theta - |y|)^{-1}.
\end{equation}
Note that on $\Omega_1$ we have $h \geq 4$ and $|D^k h| \leq c(k, s, \theta)$.

Provided $t \geq s |\lambda|$, we have $|f|_{C^3(\Omega_1)} \leq
c(\bC, s, \theta)$.  Therefore there are $t_0(\bC)$, $r_0(\bC, s,
\theta)$ so that for every $s |\lambda| \leq t \leq t_0$, there are
surfaces $X_t$ defined in $\Omega_1$ with negative mean curvature, as
constructed in Theorem \ref{thm:barrier}.  Each $y$-slice $X_t \cap \{
y = y' \}$ is trapped between $H(t f(y')^p \pm t) \times \{y'\}$ in
$\R^{n+1} \times \{y'\}$, and hence (recalling our definition of
$\sigma, f$) is trapped between $H(\lambda + 2 t h(y'))\times \{y'\}$
and $H(\lambda + t h(y')/2) \times \{y'\}$. 

Provided $\delta_0(\bC)$ is sufficiently small, we have $\lambda + t_0 h(y')/2 \geq t_0/4$.  Therefore since $\bar d \geq d$, taking $\delta_0(\bC)$ smaller as necessary, we deduce $M$ lies below $X_{t_0}$ in $\Omega_1$.  Set $t_1 = \beta^{-2} (b + sd) = \beta^{-2} s \bar d$.  We claim that $M$ lies below $X_{t_1}$ in $\del\Omega_1$, provided $\beta(\bC)$ is chosen sufficiently small.

Let $S_1 = \del\Omega_1 \cap \{ |y| = \theta - s \}$.  In $S_1$ we have $h \geq 1/s$, and so $X_{t_1}$ lies above $H(\lambda + \beta^{-2} \bar d/2) \times \R^l$ in $S_1$.  But of course $\beta^{-2} \bar d/2 \geq d$, and so $H(\lambda + \beta^{-2} \bar d/2) \times \R^l$ lies above $M$ in $S_1$.

Let $S_2 = \del \Omega_1 \cap \{ |x| = r_0\}$.  In $S_2$, $X_{t_1}$ lies above $H(\lambda + \beta^{-2} \bar d) \times \R^l$, and hence above $H(\beta^{-2} \bar d/2) \times \R^l$.  On the other hand, provided $\bar d(\beta)$, $\delta_0(\beta, \bar d, r_0)$ are sufficiently small, in $S_2$ $M$ lies below $\graph_{T_\lambda}(b |x|^\gamma)$, which in $S_2$ lies below $\graph_\bC( c \beta^{-1}\bar d |x|^\gamma)$, which in $S_2$ lies below $H( c \beta^{-1} \bar d) \times \R^l$.  Our claim follows by ensuring $\beta(\bC)$ is small.

Since $t \mapsto X_t$ is continuous in the Hausdorff distance, by Theorem \ref{thm:barrier} and the previous claim we can bring $t$ from $t_0$ down to $t_1$ to deduce $M$ lies below $X_{t_1}$ in $\Omega_1$.  In particular, since on $\Omega_2$ we have $h \leq c(\theta)$, we deduce that each $y$-slice of $M \cap \Omega_2$ lies below $H(\lambda + c(\bC, \beta, \theta) (b + sd))$.  Repeating the above argument with the orientations reversed implies that $M \cap \Omega_2$ is trapped between $H(\lambda \pm c(\bC, \beta, \theta)(b + sd))$.  This proves Case 1.

\vspace{3mm}

\textbf{Case 2:} $d + s^{-1} b \leq \beta |\lambda|$.  Here we construct graphical barriers for $M$ over $T_\lambda$.  There is no loss in generality in assuming $\lambda > 0$.  For ease of notation write $\Phi(x, \eps) = \Phi_{\eps, +}(x)$ for the graphing function of $(1+\eps)H_+$ over $H_+$ as in Lemma \ref{lem:Phi}, and set $\mu = |\lambda|^{1/(1-\gamma)}$.  Define for $i = 1, 2$ the domains $\tilde \Omega_i = \{ (x, y) : (x/2, y) \in \Omega_i\}$.

First note that on $T_\lambda$ we have the inequality $|x| \geq \mu/c(\bC)$.  Second, recall that the $C^3$ regularity scale of $T_\lambda$ at $x$ is comparable to $|x|$.  Third, note that by Lemma \ref{lem:small-H-graph} (ensuring $\beta(\bC)$ is small) we know that
\begin{equation}\label{eqn:nc-2.5}
\text{$M$ is trapped between the graphs $\graph_{T_\lambda}( \pm c_1 d |x|^\gamma)$ in $B_1$},
\end{equation}
for some constant $c_1(\bC)$, and hence $M$ is trapped between $\graph_{T_\lambda}( \pm c_1 \beta |\lambda| |x|^\gamma)$ in $B_1$.

For $A(\bC, \gamma')$ a large constant to be determined later, let $\eta(t) : \R \to \R$ be a smooth increasing function satisfying $\eta(t) = t$ for $|t| < A \beta/2$, $\eta(t) \equiv \sign(t) A \beta$ for $|t| \geq A \beta$, and $|\eta'| \leq 10$.  For $t \in [0, 1]$ define $G_t(x, y) : T_\lambda \cap \{ |y| < \theta \} \to \R$ by
\begin{gather}\label{eqn:nc-3}
G_t(x, y) = \mu \Phi( \mu^{-1} x, \eta(t h(y)) ) - \eta(t) |\lambda| F_{\gamma'}(x),
\end{gather}
where $F_{\gamma'}$ as in Lemma \ref{lem:F_a}.  The $G_t$ will define our graphical barriers.

From Lemma \ref{lem:Phi}, for $(x, y) = (r\theta, y) \in T_\lambda \cap \tilde\Omega_1$ we have
\[
\mu \Phi(\mu^{-1} x, \eta(th(y))) = \eta(t h(y)) (\mu \Phi_+(\mu^{-1} x) \pm c(\bC) A \beta |\lambda| r^{\gamma}),
\]
and so, ensuring $\beta(A, \bC)$ is small, we get
\[
\eta(th(y)) |\lambda| r^{\gamma}/c \leq \mu \Phi(\mu^{-1} x \eta(th(y))) \leq c\eta(th(y)) |\lambda| r^{\gamma}.
\]
for $c = c(\bC)$.  Since $|F_{\gamma'}(x)| \leq c |x|^{\gamma'}$ and $\eta(th(y)) \geq \eta(t)$, ensuring $r_0(\gamma', \bC)$ is small, we deduce that
\begin{gather}\label{eqn:nc-4}
\eta(th(y)) |\lambda| |x|^{\gamma}/c \leq G_t(x, y) \leq c\eta(th(y)) |\lambda| |x|^{\gamma} \leq c A \beta |\lambda| |x|^{\gamma}
\end{gather}
on $T_\lambda \cap \tilde \Omega_1$, for $c = c(\bC)$.

By a similar computation, recalling that $\gamma' > \gamma$ and $|x| = r$, we have
\begin{align*}
|\nabla G_t(x, y)| 
&\leq c \eta |\lambda| r^{\gamma-1} + c t |Dh| |\lambda| r^{\gamma} + c \eta |\lambda| r^{\gamma'-1} \\
&\leq (c A \beta + c t |Dh| r) |\lambda| r^{\gamma-1},
\end{align*}
and
\begin{align*}
|\nabla^2 G_t(x, y)| \leq (c A \beta + c |Dh| r + c |D^2 h| r^2) |\lambda| r^{\gamma-2},
\end{align*}
where $c = c(\bC, \gamma')$.  Ensuring $r_0(\bC, \gamma', \beta, s, \theta)$ is sufficiently small, and recalling the bound $|x| \geq \mu/c(\bC)$ on $T_\lambda$, we get for $i = 0, 1, 2$ the bounds
\begin{gather}\label{eqn:nc-5}
|\nabla^i G_t(x, y)| \leq c(\bC, \gamma') A \beta |\lambda| |x|^{\gamma-i} \leq c(\bC, \gamma') A \beta |x|^{1-i} \quad \text{ on } T_\lambda \cap \tilde \Omega_1.
\end{gather}
In particular, ensuring $\beta(\bC, \gamma', A)$ is sufficiently small we get that $\graph_{T_\lambda}(G_t)$ is a smooth hypersurface without boundary in $\Omega_1$.

We aim to show the graph of $G_t$ has negative mean curvature in $\Omega_1$.  We first compute
\[
\cM_{T_\lambda}(G_t) = \cM_{T_\lambda}( \mu \Phi(\mu^{-1} x, \eta(t h(y)))) - \eta(t) |\lambda| \int_0^1 D \cM_{T_\lambda}(G_{t, s}) [ F_{\gamma'}] ds =: I + II
\]
where $G_{t, s}(x, y) = \mu \Phi(\mu^{-1} x \eta(t h(y))) - s \eta(t)
|\lambda| F_{\gamma'}(x)$.  We claim that, at $(x, y)  \in T_\lambda
\cap \tilde \Omega_1$, with $|x|=r$, we have
\begin{gather}\label{eqn:nc-6}
|I| \leq c ( t^2 |Dh|^2 + t|D^2 h|) |\lambda| r^{\gamma}, \quad II \leq - \eta(t) |\lambda| r^{\gamma'-2}/c
\end{gather}
for $c = c(\bC, \gamma')$.  Bounds \eqref{eqn:nc-6} will imply that on $T_\lambda \cap \tilde\Omega_1$ and for $0 < t \leq A \beta$ we have
\begin{gather}\label{eqn:nc-6.5}
\cM_{T_\lambda}(G_t) \leq c(\bC, \gamma') t |\lambda| (r |h|_{C^2(\tilde\Omega_1)} - 1)r^{\gamma'-2} < 0,
\end{gather}
provided we ensure $r_0(\bC, \gamma', \theta, s)$ is chosen sufficiently small.

Let us prove our claim for $|I|$, i.e. the first inequality in \eqref{eqn:nc-6}.  By construction, $\graph_{T_\lambda} (\mu \Phi(\mu^{-1} x, \eta(t h(y)))) \cap \{ |y| < \theta \}$ coincides with $S \cap \{ |y| < \theta\}$ where $S$ is the hypersurface
\[
S = \bigcup_{|y| < \theta} \left[ (1+\eta(th(y))) H(\lambda) \right] \times \{y\}.
\]
Since $\max\{\eta(th(y))|x|,  \mu\Phi(\mu^{-1} x, \eta(th(y)))\} \leq |x|/2$ provided $\beta(\bC, A)$ is sufficiently small, it will suffice to prove the bound
\begin{gather}\label{eqn:nc-7}
|\cM_S( (1+\eta(th(y)))x, y)| \leq c (t^2 |Dh|^2 + t |D^2 h|) |\lambda| |x|^{\gamma}
\end{gather}
for any $((1+\eta(th(y)))x, y) \in S \cap \{ |y| < \theta\}$, where $\cM_S$ is the mean curvature of $S$.  From Lemma \ref{lem:tilde-S}, for the same $x, y$ as above we have the bound
\begin{gather}\label{eqn:nc-8}
|\cM_S| \leq c(l)|x \cdot \nu_{H(\lambda)}(x)| |D^2 \eta(t h(y))| + c(l)|D \eta(t h(y))|^2 |h_{H(\lambda)}(x^T, x^T)|,
\end{gather}
where $h_{H(\lambda)}$ is the second fundamental form of $H(\lambda)$, and $\nu_{H(\lambda)}$ the unit normal.  Trivially we have
\begin{gather}
|D \eta(t h(y))|^2 \leq c t^2 |Dh|^2, \quad |D^2 \eta(t h(y))| \leq c t |D^2 h| + c t^2 |Dh|^2, \label{eqn:nc-9} \\
\text{and} \quad |h_{H(\lambda)}(x^T, x^T)| \leq c |x|, \label{eqn:nc-10}
\end{gather}
for $c = c(\bC)$.

If $|x| \leq R_0 \mu$ (for $R_0$ as in \eqref{eqn:H-graph}) then since $\mu/c(\bC) \leq |x|$ also, the bound \eqref{eqn:nc-7} follows from \eqref{eqn:nc-8}, \eqref{eqn:nc-9}, \eqref{eqn:nc-10} and the inequality $|x| \leq R_0 |\lambda| |x|^{\gamma}$.  If $|x| > R_0 \mu$, then near $x$, $H(\lambda)$ is graphical over $\bC_0$ by the function $\Psi_\lambda$ as in \eqref{eqn:def-Psi}.  From \eqref{eqn:H-graph2}, \eqref{eqn:v-bounds} we have
\begin{gather}\label{eqn:nc-11}
|x \cdot \nu_{H(\lambda)}(x)| \leq c |\nabla \Psi_\lambda| |x| \leq c |\lambda| |x|^{\gamma},  \quad |h_{H(\lambda)}(x^T, x^T)| \leq c |\nabla^2 \Psi_\lambda| |x|^2 \leq c |\lambda| |x|^{\gamma},
\end{gather}
and the bound \eqref{eqn:nc-7} follows from \eqref{eqn:nc-8}, \eqref{eqn:nc-9}, \eqref{eqn:nc-11}.

We consider now the bound for $II$.  By similar computations as before, we have
\[
|\nabla^i G_{t, s}(x, y)| \leq c(\bC, \gamma') A \beta |\lambda| |x|^{\gamma-i}, \quad (i = 0, 1, 2), \quad \text{ on } T_\lambda \cap \tilde\Omega_1,
\]
for any $s, t \in [0, 1]$.  By scaling and the definition of $F_{\gamma'}$ we have
\begin{align}
D\cM_{T_\lambda}(G_{t, s}) [F_{\gamma'}]|_{(x, y)} &= \mu^{-2} D\cM_{T_1}(G_{t, s}^\mu)[F_{\gamma'}(\mu \cdot)]|_{(\mu^{-1} x, y)} \\
&= \mu^{\gamma'-2} D\cM_{T_1}(G^\mu_{t, s})[F_{\gamma'}]|_{(\mu^{-1} x, y)},
\end{align}
where $G_{t, s}^\mu(\xi, \zeta) = \mu^{-1} G_{t, s}(\mu \xi, \zeta)$.  Using \eqref{eqn:nc-5} we have on $T_1 \cap \mu^{-1} \tilde\Omega_1$, 
\[
|\nabla^i G^\mu_{t, s}| \leq \mu^{-1+\gamma} c A \beta |\lambda| |x|^{\gamma-i} \leq c A\beta |x|^{\gamma - i} \quad (i = 0, 1, 2),
\]
and therefore provided $\beta(\bC,\gamma', A)$ is sufficiently small, we can apply Lemma \ref{lem:L_G} to deduce
\[
\mu^{\gamma'-2} D\cM_{T_1}(G^\mu_{t, s})[F_{\gamma}] |_{(\mu^{-1} x, y)} \geq |x|^{\gamma'-2}/c.
\]
This proves the bound for $II$ in \eqref{eqn:nc-6}, completing the proof of our claim and hence the inequality \eqref{eqn:nc-6.5}.

We now use $G_t$ to control $D_{T_\lambda}(M; \Omega_2)$.  First note that if $0 < t \leq A \beta$, then \eqref{eqn:nc-4} implies
\begin{gather}\label{eqn:nc-12}
G_t(x, y) \geq \min\{  t h(y), A \beta \} |\lambda| |x|^{\gamma}/c \geq \min\{ t, A \beta\} |\lambda| |x|^{\gamma}/c \quad \text{ on } T_\lambda \cap \tilde\Omega_1,
\end{gather}
for $c = c(\bC, \gamma')$.  Therefore, by ensuring $A(\bC, \gamma')$ is sufficiently large from \eqref{eqn:nc-2.5} we know that $M$ lies below $\graph_{T_\lambda}(G_{A \beta})$ in $\Omega_1$.  Set $t_1 = \min\{ \beta^{-2} |\lambda|^{-1} (b + sd), A \beta \}$.  We claim that, provided $\beta(\bC)$ is chosen sufficiently small, $M$ lies below $\graph_{T_\lambda}(G_t)$ in $\del\Omega_1$ for every $t_1 \leq t \leq A \beta$.

We prove this claim.  Of course if $t_1 = A \beta$ there is nothing to show, so let us assume $t_1 < A \beta$.  Suppose $(x, y) + G_t(x, y)\nu_{T_\lambda}(x, y) \in \del\Omega_1 \cap S_1$.  Then $|y| = \theta - s$ and (by \eqref{eqn:nc-5}) $|x| < 2r_0$.  Since $h(y) \geq 1/s$, we can use \eqref{eqn:nc-12} and our assumption $d + s^{-1} b \leq \beta |\lambda|$ to estimate
\[
G_t(x, y) \geq \min\{ (\beta^{-2} |\lambda|^{-1} s d) s^{-1} |\lambda| |x|^\gamma/c, A \beta|\lambda| |x|^\gamma/c \} \geq c_1 d |x|^\gamma
\]
provided $\beta(\bC)$, $A(\bC)^{-1}$ are chosen sufficiently small.  Therefore $\graph_{T_\lambda}(G_t)$ lies above $\graph_{T_\lambda}(c_1 d |x|^{\gamma})$ in $S_1$, and hence lies above $M$ in $S_1$.

Suppose $(x, y) + G_t(x, y) \nu_{T_\lambda}(x, y) \in \del\Omega_1 \cap S_2$.  Then as before $(x, y) \in \tilde\Omega_1$, and we can estimate instead
\[
G_{t}(x, y) \geq \min\{ (\beta^{-2} |\lambda|^{-1} b) |\lambda| |x|^{\gamma} /c, A \beta |\lambda| |x|^\gamma/c\}  \geq b |x|^{\gamma},
\]
again ensuring $\beta(\bC), A(\bC)^{-1}$ are small.  We deduce $\graph_{T_\lambda}(G_t)$ lies above $\graph_{T_\lambda}(b |x|^\gamma)$ in $S_2$, and hence by our assumptions lies above $M$ in $S_2$.  This finishes the proof of our claim.

By our last two claims and the negative mean curvature \eqref{eqn:nc-6.5} we can bring $t$ from $A \beta$ down to $t_1$ and deduce by the maximum principle \cite{SolomonWhite} that $M$ lies below $\graph_{T_\lambda}(G_t)$ in $\Omega_1$.  In particular, since $h \leq c(\bC, \theta)$ on $\tilde \Omega_2$ from \eqref{eqn:nc-4} we get that $M$ lies below $\graph_{T_\lambda}( c(\bC, \theta, \gamma') (b + sd) |x|^\gamma)$ in $\Omega_2$.  Repeating the argument with the orientation swapped, we deduce $M$ is trapped between $\graph_{T_\lambda}(\pm c(\bC, \theta, \gamma') (b + sd) |x|^\gamma)$ in $\Omega_2$.  Since $b + sd \leq s \beta |\lambda|$, ensuring $\beta(\bC, \theta, \gamma')$ is sufficiently small we can apply apply Lemma \ref{lem:small-H-graph} to finish the proof of Case 2.

\vspace{3mm}

\textbf{With $A_{1,\rho}$ in place of $B_1$:} To get \eqref{eqn:nc-concl2}, we only need to modify our definition of $h, \Omega_1, \Omega_2$.  In this case, we define
\[
h(y) = (|y| - \theta^{-1} \rho)^{-1} + (\theta - |y|)^{-1},
\]
and
\[
\Omega_1 = \{ r \leq r_0 \} \cap \{ |y| \in [(\theta^{-1}+s)\rho, \theta-s] \}, \quad \Omega_2 = \{ r \leq r_0 \} \cap \{ |y| \in [\theta^{-2}\rho, \theta^2] \}.
\]
The proof for \eqref{eqn:nc-concl2} is then verbatim to the proof above for \eqref{eqn:nc-concl1}, of course replacing $B_1$ with $A_{1,\rho}$ wherever it occurs, and allowing all constants to depend on $\rho$ also.
\end{proof}

The main utility of Theorem \ref{thm:nc} is in the below Corollary
\ref{cor:nc} concerning inhomogeneous  blow-up limits, in particular in the lower bound of Item \ref{item:nc3}.
\begin{cor}\label{cor:nc}
Let $M_i$ be a sequence of complete minimal hypersurfaces in $B_1$, and $\lambda_i \to 0$.  Suppose that
\[
D_{T_{\lambda_i}}(M_i; B_1) \to 0, \quad (1/2)||\bC||(B_1) \leq ||M_i||(B_1) \leq (3/2) ||\bC||(B_1),
\]
and let $\mu_i$ be a sequence such that $\sup_i \mu_i^{-1} D_{T_{\lambda_i}}(M_i; B_1) < \infty$.

Then, first, there are $\tau_i \to 0$ so that
\[
M_i \cap B_{1-\tau_i} \cap \{ |x| \geq \tau_i \} = \graph_{T_{\lambda_i}}(u_i),
\]
for $u_i : B_{1-\tau_i/2} \cap \{ |x| \geq \tau_i/2\} \to \R$ smooth functions satisfying
\[
|x|^{-1} |u_i| + |\nabla u_i| + |x| |\nabla^2 u_i| \leq \tau_i.
\]
Second, passing to a subsequence, we can find a Jacobi field $v$ on $\bC \cap B_1$ so that for any given $\theta < 1$ we have:
\begin{enumerate}
\item \label{item:nc1} $\mu_i^{-1} u_i \to v$ smoothly on compact subsets of $\bC \cap B_1\cap \{ |x| > 0 \}$;

\item \label{item:nc2} $\sup_{\bC \cap B_1} ||x|^{-\gamma} v| \leq c(\bC) \liminf_i \mu_i^{-1} D_{T_{\lambda_i}}(M_i; B_1)$;

\item \label{item:nc3} $\limsup_i \mu_i^{-1} D_{T_{\lambda_i}}(M_i; B_{\theta^2}) \leq c(\bC, \theta) \sup_{\bC \cap B_{\theta}} ||x|^{-\gamma} v|$.
\end{enumerate}
Third, given any $\rho \in (0, 1/2]$, the above Corollary also holds with $A_{1,\rho}$, $A_{1-\tau_i, \rho+\tau_i}$, $A_{1-\tau_i/2, \rho+\tau_i/2}$, $A_{\theta, \theta^{-1} \rho}$, $A_{\theta^2, \theta^{-2} \rho}$ in place of $B_1$, $B_{1-\tau_i}$, $B_{1-\tau_i/2}$, $B_{\theta}$, $B_{\theta^2}$ (resp.), in which case all constants depend on $\rho$ also.
\end{cor}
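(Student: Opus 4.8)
The plan is to run a standard inhomogeneous blow-up, using Theorem~\ref{thm:nc} to control what happens near the spine $\{x=0\}$ in the limit. \emph{Step 1 (graphicality).} Since $\lambda_i\to 0$ and $d_i := D_{T_{\lambda_i}}(M_i;B_1)\to 0$, the supports $\spt M_i$ converge in Hausdorff distance (within $B_1$) to $\bC = T_0$. Combined with stationarity, the constancy theorem, and the monotonicity formula applied from a point of $M_i$ near $0$ (so the limit has density $\geq 1$ there and is nonzero), the upper mass bound $||M_i||(B_1)\leq \tfrac32||\bC||(B_1)$ forces a subsequence of $[M_i]$ to converge as varifolds to $[\bC]$ with multiplicity one. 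Allard's theorem, applied scale-invariantly using that the $C^3$-regularity scale of $\bC$ at $(x,y)$ is comparable to $|x|$, then shows $M_i\cap B_{1-\tau_i}\cap\{|x|\geq\tau_i\}$ is a graph over $\bC$ with scale-invariant $C^k$-norm tending to $0$, for suitable $\tau_i\to 0$. Reparametrizing over $T_{\lambda_i}$ via the Hardt--Simon expansion \eqref{eqn:def-Psi} (with $|\Psi_{\lambda_i}|\leq c|\lambda_i||x|^\gamma$) gives the graph $u_i$ over $T_{\lambda_i}$ with $|x|^{-1}|u_i| + |\nabla u_i| + |x||\nabla^2 u_i|\leq\tau_i$, after enlarging $\tau_i$ so that it still tends to $0$ but dominates $|\lambda_i|^{1/(1-\gamma)}$ and $d_i^{1/(1-\gamma)}$. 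Finally, the trapping of $M_i$ between $H(\lambda_i\pm d_i)\times\R^l$ together with Lemma~\ref{lem:small-H-graph} (when $d_i\leq\beta|\lambda_i|$) or the expansion $\Psi_{\lambda_i\pm d_i}-\Psi_{\lambda_i} = \pm d_i r^\gamma\psi_1 + O((|\lambda_i|+d_i)^{1+\alpha_0/(1-\gamma)} r^{\gamma-\alpha_0})$ (otherwise) upgrades this to the bound $||x|^{-\gamma}u_i|\leq c(\bC)d_i$ on the graphical region.

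\emph{Step 2 (the blow-up and items \ref{item:nc1}, \ref{item:nc2}).} Set $v_i := \mu_i^{-1}u_i$; by hypothesis $\mu_i^{-1}d_i$ is bounded, so $|v_i|\leq c(\bC)(\mu_i^{-1}d_i)|x|^\gamma$ is locally uniformly bounded on $\bC\cap B_1\cap\{|x|>0\}$. Since $M_i$ is minimal, $v_i$ solves $L_{T_{\lambda_i}} v_i = -\mu_i^{-1}(\cM_{T_{\lambda_i}}(u_i) - L_{T_{\lambda_i}} u_i)$, whose right-hand side is the quadratic remainder of the graphical mean-curvature operator and is bounded on compact sets by $c\tau_i$ times the local $C^2$-norm of $v_i$, while $L_{T_{\lambda_i}}\to L_\bC$ in $C^\infty_{loc}(\{|x|>0\})$ since $T_{\lambda_i}\to\bC$ there. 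Interior Schauder estimates and a diagonal subsequence argument then produce $v$ with $v_i\to v$ in $C^\infty_{loc}(\bC\cap B_1\cap\{|x|>0\})$ and $L_\bC v=0$, which is item \ref{item:nc1}. For item \ref{item:nc2}, first pass to a further subsequence along which $\mu_i^{-1}d_i$ converges to $\liminf_i\mu_i^{-1}D_{T_{\lambda_i}}(M_i;B_1)$; then at any fixed $(x_0,y_0)\in\bC\cap B_1\cap\{|x|>0\}$ we are, for $i$ large, in the graphical region, so $|x_0|^{-\gamma}|v_i(x_0,y_0)| = |x_0|^{-\gamma}\mu_i^{-1}|u_i(x_0,y_0)| \leq c(\bC)\mu_i^{-1}d_i$, and letting $i\to\infty$ and taking the supremum over $(x_0,y_0)$ gives item \ref{item:nc2}. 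In particular $\sup_{\bC\cap B_1}||x|^{-\gamma}v|<\infty$, hence $\int_{\bC\cap B_1}|x|^{-2}v^2<\infty$ by Lemma~\ref{lem:linfty-l2}.

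\emph{Step 3 (item \ref{item:nc3} via non-concentration).} We apply Theorem~\ref{thm:nc} not to $M_i$ but to the rescaled surfaces $\tilde M_i := \eta_{0,\theta}(M_i)$. Using $\eta_{0,\theta}(T_\lambda) = T_{\theta^{\gamma-1}\lambda}$ and the scaling $D_{c T_\lambda}(cM;cU) = c^{1-\gamma}D_{T_\lambda}(M;U)$, we get $\tilde\lambda_i := \theta^{\gamma-1}\lambda_i\to 0$, $D_{T_{\tilde\lambda_i}}(\tilde M_i;B_1) = \theta^{\gamma-1}D_{T_{\lambda_i}}(M_i;B_\theta)\to 0$, and $\tilde M_i\cap B_1\cap\{r\geq r_0\}$ trapped between $\graph_{T_{\tilde\lambda_i}}(\pm\tilde b_i|x|^\gamma)$ with $\theta^{1-\gamma}\tilde b_i = \sup_{|x|\in[\theta r_0,\theta]}||x|^{-\gamma}u_i|\to 0$. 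Hence, for each fixed $s\in(0,1/2]$ and all $i$ large (so that the smallness hypotheses with $\delta_0=\delta_0(\bC,\theta,s)$ hold), Theorem~\ref{thm:nc} yields $D_{T_{\tilde\lambda_i}}(\tilde M_i;B_\theta)\leq c_0(\bC,\theta)(\tilde b_i + s D_{T_{\tilde\lambda_i}}(\tilde M_i;B_1))$, where crucially $c_0$ does not depend on $s$. Unwinding the scaling (note $\eta_{0,\theta}(B_{\theta^2}) = B_\theta$) and multiplying through by $\theta^{1-\gamma}\mu_i^{-1}$,
\[ \mu_i^{-1}D_{T_{\lambda_i}}(M_i;B_{\theta^2}) \leq c_0(\bC,\theta)\Big( \sup_{|x|\in[\theta r_0,\theta]}||x|^{-\gamma}v_i| + s\,\mu_i^{-1}D_{T_{\lambda_i}}(M_i;B_\theta)\Big). \]
Taking $\limsup_i$: the first term tends to $\sup_{\bC\cap\overline{B_\theta}\cap\{|x|\geq\theta r_0\}}||x|^{-\gamma}v|\leq \sup_{\bC\cap B_\theta}||x|^{-\gamma}v|$ by uniform convergence of $v_i\to v$ on this compact set, while the second is $\leq s\sup_i\mu_i^{-1}D_{T_{\lambda_i}}(M_i;B_1)<\infty$; since this holds for every $s$, letting $s\to 0$ gives item \ref{item:nc3}. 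The third (annulus) statement follows by the identical argument, using the annulus form \eqref{eqn:nc-concl2} of Theorem~\ref{thm:nc} and replacing balls by the indicated annuli throughout.

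\emph{Main obstacle.} The delicate point is item \ref{item:nc3}: one must invoke non-concentration at exactly the right scale (via $\eta_{0,\theta}(M_i)$) so that the ``boundary'' term $b$ refers to the region $B_\theta\cap\{r\geq r_0\}$ — whence it is controlled by $v|_{B_\theta}$ in the blow-up limit — rather than to $B_1\cap\{r\geq r_0\}$, and one must then use the $s$-independence of $c_0$ in Theorem~\ref{thm:nc} so that sending $s\to 0$ absorbs the $D_{T_{\lambda_i}}(M_i;B_1)$ term without destroying the estimate. The remaining ingredients — multiplicity-one varifold convergence, Allard graphicality with scale-invariant bounds, the Hardt--Simon reparametrization estimates, and the elliptic compactness for the $v_i$ — are routine but require keeping all constants uniform in $i$.
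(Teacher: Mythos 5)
Your proposal is correct and follows essentially the same route as the paper: graphicality via constancy/Allard, the bound $||x|^{-\gamma}u_i|\leq c(\bC)d_i$ obtained by splitting into the cases $d_i\lessgtr\beta|\lambda_i|$ (trapping between foliation leaves vs.\ Lemma \ref{lem:small-H-graph}), elliptic compactness for items \ref{item:nc1}--\ref{item:nc2}, and for item \ref{item:nc3} an application of Theorem \ref{thm:nc} at scale $\theta$ followed by $i\to\infty$ and then $s\to 0$ using the $s$-independence of $c_0$. The only difference is presentational — you make explicit the rescaling $\eta_{0,\theta}$ that the paper leaves implicit.
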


\begin{remark}
Since $2\gamma > -n+2$, and by Lemma \ref{lem:linfty-l2}, we have for every $\theta < 1$:
\[
\frac{1}{c(\bC, \theta)} \sup_{\bC \cap B_{\theta}} ||x|^{-\gamma} v|^2 \leq \int_{\bC \cap B_1} |v|^2 \leq \int_{\bC \cap B_1} |x|^{-2} |v|^2 \leq c(\bC) \sup_{\bC \cap B_1} ||x|^{-\gamma} v|^2.
\]
\end{remark}

\begin{proof}
The existence of $\tau_i, u_i$ follows from the definition of $D$, the constancy theorem, and Allard's theorem by a standard argument.  For convenience write $U_i = B_{1-\tau_i} \cap \{ |x| > \tau_i\}$ and $d_i = D_{T_{\lambda_i}}(M_i; B_1)$.  After passing to a subsequence we can assume that $\Gamma = \lim_i \mu^{-1}_i d_i$ exists, and for all $i$, either $d_i > \beta|\lambda_i|$ or $d_i \leq \beta |\lambda_i|$, for $\beta$ a small number to be determined momentarily.

By definition of $D$, for all $i$, $M_i \cap B_1$ is trapped
between $H(\lambda_i \pm d_i) \times \R^l$.  If $d_i >
\beta|\lambda_i|$, then by ensuring $\tau_i \to 0$ sufficiently
slowly, from equations \eqref{eqn:H-graph}, \eqref{eqn:H-graph2} we get that $M_i \cap B_1 \cap \{ |x| > \tau_i\}$ is trapped in $\{ |x| > \tau_i\}$ between $\graph_{\bC \cap \{ |x| > \tau_i/2\}}(\pm c(\bC)(d_i + |\lambda_i|) |x|^\gamma)$, and hence $|u_i| \leq c (d_i + |\lambda_i) |x|^\gamma \leq c(\bC, \beta)d_i |x|^\gamma$.  If $d_i \leq \beta|\lambda_i|$, then provided $\beta(\bC)$ is sufficiently small Lemma \ref{lem:small-H-graph} implies $M_i \cap B_1$ is trapped between $\graph_{T_\lambda}(\pm c(\bC) d_i |x|^\gamma)$, and hence $|u_i| \leq c(\bC) d_i |x|^\gamma$.

Either way, we have that
\begin{equation}\label{eqn:cornc-1}
\sup_{T_{\lambda_i} \cap U_i} ||x|^{-\gamma} u_i| \leq c(\bC, \beta) d_i,
\end{equation}
and hence by standard elliptic theory we can pass to a subsequence,
find a Jacobi field $v$ on $\bC \cap B_1$, and get smooth convergence
$\mu^{-1} u_i \to v$ on compact subsets of $\bC \cap B_1 \cap \{ |x| >
0 \}$.  The estimate \eqref{eqn:cornc-1} implies 
\[
\sup_{\bC \cap U} ||x|^{-\gamma} v| \leq c(\bC, \beta) \Gamma \quad \forall U \subset\subset \bC \cap B_1 \cap \{ |x| > 0\},
\]
which proves Items \ref{item:nc1}, \ref{item:nc2}.

To prove Item \ref{item:nc3}, we use Theorem \ref{thm:nc} and our hypotheses, to deduce that for every $s > 0$ there is an $r_0 > 0$ so that for $i \gg 1$ we have
\[
D_{T_{\lambda_i}}(M_i; B_{\theta^2}) \leq c_0 \sup_{T_{\lambda_i} \cap B_{\theta} \cap \{ |x| > r_0 \}} ||x|^{-\gamma} u_i| + c_0 s D_{T_{\lambda_i}}(M_i; B_1),
\]
where $c_0 = c_0(\bC, \theta)$ is independent of $s$.  We can
therefore take a limit as $i \to \infty$, and then as $s \to 0$, we
deduce Item \ref{item:nc3}.

\end{proof}

\section{Geometric 3-annulus lemma}

\begin{lemma}\label{lem:3ann}
Given $\eps < \eps_0(\bC)/16$, we can find an $R_0(\bC, \eps) > 1$ so
that for every $R \geq R_0$, there is a $\delta_0(\bC, \eps, R) > 0$
so that the following holds.

Let $|\lambda| < \delta_0$, and let $M$ be a complete minimal hypersurface in $B_R$, such that
\begin{equation}\label{eqn:3ann-hyp}
E(M, \bC, R) < \delta_0, \quad \theta_M(0, R) \leq (3/2) \theta_\bC(0).
\end{equation}
Then:
\begin{align}
&E(M, T_\lambda, 1) \geq E(M, T_\lambda, 1/R) R^{\gamma-1+\eps} \\
&\implies E(M, T_\lambda, R) \geq E(M, T_\lambda, 1) R^{\gamma - 1 +\eps_0 - \eps}. \label{eqn:3ann-concl}
\end{align}
\end{lemma}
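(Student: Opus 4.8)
The plan is to prove this "geometric" 3-annulus lemma by reducing it, via a contradiction/compactness argument, to the "baby" 3-annulus Lemma~\ref{lem:1d-3ann} applied to the $L^2$-expansion of a limiting Jacobi field on $\bC$. Suppose the conclusion fails: then there is a fixed $\eps$, and sequences $R_j \to \infty$ (or along a sequence of admissible $R$), $\delta_j \to 0$, $|\lambda_j| < \delta_j$, and minimal hypersurfaces $M_j$ in $B_{R_j}$ satisfying $E(M_j, \bC, R_j) < \delta_j$ and the density bound, for which $E(M_j, T_{\lambda_j}, 1) \geq E(M_j, T_{\lambda_j}, 1/R_j) R_j^{\gamma - 1 + \eps}$ but $E(M_j, T_{\lambda_j}, R_j) < E(M_j, T_{\lambda_j}, 1) R_j^{\gamma - 1 + \eps_0 - \eps}$. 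Rescaling by the factors $R_j^{-k}$ for dyadic-type scales $k$ between $1/R_j$ and $R_j$, the hypothesis $E(M_j, \bC, R_j) < \delta_j$ together with monotonicity of density forces $M_j$ to be, at every intermediate scale, a small graph over $\bC$ (equivalently over $T_{\lambda_j}$, since $\lambda_j \to 0$). Normalizing the graphing functions $u_j$ over $T_{\lambda_j}$ by $\mu_j := D_{T_{\lambda_j}}(M_j; B_1)$ (or the appropriate running quantity), Corollary~\ref{cor:nc} gives a subsequential limit Jacobi field $v$ on $\bC \cap B_1$, and more importantly the non-concentration estimate of Theorem~\ref{thm:nc} lets me pass the excess quantities $E(M_j, T_{\lambda_j}, R)$ to the corresponding $L^2$-type quantities of $v$, up to uniform constants independent of the small parameter $s$.

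The key translation is this: by Corollary~\ref{cor:nc}(\ref{item:nc2})--(\ref{item:nc3}) and the Remark following it, the excess $E(M_j, T_{\lambda_j}, \theta^k R)$ at scale $\theta^k R$ is comparable (with constants depending only on $\bC$ and a fixed $\theta$, crucially not on $s$) to $(\theta^k R)^{\gamma-1}\big(\int_{\bC \cap B_1} |\tilde v_k|^2\big)^{1/2}$ where $\tilde v_k$ is the blow-up of $M_j$ at scale $\theta^k R$. Using the Fourier-type expansion \eqref{eqn:fourier}, $\int_{\bC \cap B_\rho} v^2 = \sum_{i} a_i^2 \rho^{n+l+2 q_i}$ with $\gamma = q_1 < q_2 < \cdots$, the quantity $\rho \mapsto \rho^{\gamma-1}\big(\int_{\bC\cap B_\rho} v^2\big)^{1/2}$ up to the shift $\rho^{-n/2 - l/2}$ becomes exactly a function of the form $\psi(t) = \sum_i b_i^2 e^{2 q_i t}$ in the logarithmic variable $t = \log\rho$. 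The growth exponents of $\psi$ are the $q_i$, and $q_2 - q_1 = q_2 - \gamma$; one takes $\eps_0(\bC) := q_2 - \gamma$ (the spectral gap above the lowest growth rate $r^\gamma$). Then the hypothesis $E(M, T_\lambda, 1) \geq E(M, T_\lambda, 1/R) R^{\gamma - 1 + \eps}$ translates, in the limit, to $\psi(t + T) \geq e^{2(q_1 + \eps)T}\psi(t)$ with $T = \log R$ (and $t = -\log R$), and Lemma~\ref{lem:1d-3ann} with $k = 1$ yields $\psi(t + 2T) \geq e^{2(q_2 - \eps)T}\psi(t+T)$, i.e. $E(M, T_\lambda, R) \geq E(M, T_\lambda, 1) R^{\gamma - 1 + \eps_0 - \eps}$ — contradicting the assumed failure. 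The discretization (dyadic scales $\theta^k$ versus the continuous $T = \log R$) is handled by choosing $\theta$ close to $1$ and absorbing the $O(1-\theta)$ errors into the $\eps$-slack, which is why the statement has $\eps < \eps_0/16$ and permits $R_0$ and $\delta_0$ to depend on $\eps$, $R$.

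**Main obstacle.** The delicate point is making the comparison between the genuinely nonlinear excess $E(M, T_\lambda, \rho)$ and the linear $L^2$-quantity of the blow-up \emph{uniform across all intermediate scales simultaneously}, with constants independent of the smallness parameter $s$ in Theorem~\ref{thm:nc}; this is exactly what the "$c_0$ independent of $s$" clause in Theorem~\ref{thm:nc} and the lower bound in Corollary~\ref{cor:nc}(\ref{item:nc3}) are designed to provide, but one must be careful that the "trapped between $\graph_{T_\lambda}(\pm b|x|^\gamma)$ for $b < \delta_0$" hypothesis of Theorem~\ref{thm:nc} holds at every rescaled scale — this follows from the one-sidedness/graphicality over $\bC$ forced by $E(M,\bC,R) < \delta_0$ together with $|\lambda|$ small, but propagating it through the rescalings $\eta_{0, \theta^k R}$ requires that $\delta_0$ be chosen small depending on $R$ (hence the order of quantifiers $\eps$, then $R_0$, then $\delta_0(\bC,\eps,R)$). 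A secondary technical nuisance is that $\mu_j$ could a priori be much smaller than $D_{T_{\lambda_j}}(M_j; B_1)$ at some scales if the limit Jacobi field $v$ vanishes there; one rules this out by noting that if $v \equiv 0$ the non-concentration estimate forces the excess ratio to stay below any prescribed threshold, directly contradicting the assumed lower bound $E(M_j, T_{\lambda_j}, 1) \geq E(M_j, T_{\lambda_j}, 1/R_j)R_j^{\gamma-1+\eps}$ for $j$ large.
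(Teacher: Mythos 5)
Your proposal is correct and follows essentially the same route as the paper: a contradiction/compactness argument producing a limit Jacobi field via Corollary \ref{cor:nc}, translation of the excess hypotheses into bounds on the $L^2$ quantities $\rho^{-n-l}\int_{\bC\cap B_\rho}v^2$ via the Fourier expansion \eqref{eqn:fourier}, application of the baby 3-annulus Lemma \ref{lem:1d-3ann} with $\eps_0$ the spectral gap $q_2-q_1$, and absorption of the comparison constants into the $\eps$-slack by taking $R_0(\bC,\eps)$ large. The only (harmless) difference is that you propose matching excess and $L^2$ quantities at all dyadic intermediate scales, whereas the paper only needs one-sided comparisons at the three scales $1/R$, $1$, $R$.
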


\begin{proof}
Set $\eps_0 = \min\{ q_2 - q_1, 1 \}$ for $q_i$ as in \eqref{eqn:fourier}.  Assume $R_0 \geq e^{2/\eps}$, so that we can write $R = R_*^k$ for some integer $k \geq 1$ and some $R_* \in [e^{2/\eps}, e^{4/\eps})$.  We will show the Lemma holds provided $k(\bC, \eps)$ (and hence $R_0)$ is sufficiently large, to be determined below.

Suppose the Lemma failed.  Then we have sequences $\delta_i \to 0$, $\lambda_i \to 0$, and complete minimal hypersurfaces $M_i$ in $B_R$ so that \eqref{eqn:3ann-hyp} holds but
\begin{align*}
&E(M_i, T_{\lambda_i}, 1/R) \leq E(M_i, T_{\lambda_i}, 1) R^{-\gamma+1-\eps} \\
&\text{ and } E(M_i, T_{\lambda_i}, R) \leq E(M_i, T_{\lambda_i}, 1) R^{\gamma - 1 + \eps_0 - \eps}.
\end{align*}

Since \eqref{eqn:3ann-concl} vacuously holds if $E(M, T_{\lambda}, 1) = 0$, there is no loss in assuming $M_i \cap B_1 \neq \emptyset$ for all $i$.  Then by our hypotheses \eqref{eqn:3ann-hyp}, standard compactness of stationary integral varifolds, and the constancy theorem, we deduce $M_i \to [\bC]$ as varifolds in $B_{R}$.  By Allard's theorem we can find an exhaustion $U_i$ of $B_R \setminus \{ |x| = 0 \}$ so that
\[
M_i \cap U_i = \graph_{T_{\lambda_i}}(u_i)
\]
for smooth functions $u_i$.  By Corollary \ref{cor:nc}, after passing to a subsequence, the rescaled functions $E(M_i, T_{\lambda_i}, 1)^{-1} u_i$ converge on compact subsets of $\bC \cap B_R \setminus \{ r = 0 \}$ to a Jacobi field $v$ on $\bC \cap B_R$ satisfying
\begin{align*}
&\sup_{\bC \cap B_{1/R}} (1/R)^{\gamma-1} ||x|^{-\gamma} v| \leq c(\bC) R^{-\gamma+1-\eps}, \\
&\sup_{\bC \cap B_2} ||x|^{-\gamma} v| \geq 1/c(\bC), \\
&\sup_{\bC \cap B_R} R^{\gamma-1} ||x|^{-\gamma} v| \leq c(\bC) R^{\gamma-1+\eps_0-\eps}.
\end{align*}

Define
\[
S(i)^2 = R_*^{i(-n-l)} \int_{B_{R_*^i}} v^2.
\]
Then from Lemma \ref{lem:linfty-l2} we have
\begin{align}
&S(-k) \leq c (1/R)^\gamma \sup_{B_{1/R}} ||x|^{-\gamma} v| \leq c R_*^{-k(\gamma+\eps)} \label{eqn:3ann-0.1}\\
&S(1) \geq c^{-1} R_*^{-n-l} \sup_{B_2} ||x|^{-\gamma} v| \geq 1/c(\bC, \eps) \label{eqn:3ann-0.2}\\
&S(k) \leq c R_*^{k(\gamma+\eps_0-\eps)}. \label{eqn:3ann-0.3}
\end{align}

We claim that, for any $\eta > 0$, provided $k(\bC, \eps, \eta) \in \N$ is chosen sufficiently large, then we have $S(1) \leq \eta$, which will contradict \eqref{eqn:3ann-0.2} for $\eta(\bC, \eps)$ sufficiently small.  We prove this claim.  First assume
\begin{equation}\label{eqn:3ann-1}
S(1) \geq R_*^{(k+1)(\gamma+\eps/2)} S(-k).
\end{equation}
Then by Lemma \ref{lem:1d-3ann} and our choice of $R_*$, we have
\[
S(k) \geq R_*^{(k-1)(\gamma+\eps_0-\eps/2)} S(1),
\]
which implies
\begin{equation}\label{eqn:3ann-2}
S(1) \leq c(\bC, \eps) R_*^{-k\eps/2} \leq \eta,
\end{equation}
provided we ensure $k(\bC, \eps, \eta)$ is large.  On the other hand, if \eqref{eqn:3ann-1} fails, then we again have \eqref{eqn:3ann-2} (for perhaps a larger constant $c(\bC, \eps)$, and hence a larger $k(\bC, \eps, \eta)$).  This proves our claim, and finishes the proof of Lemma \ref{lem:3ann}.
\end{proof}

\section{Growth of entire hypersurfaces}

\begin{prop}\label{prop:decay-once}
There are constants $\eps_1(\bC)$, $\rho_1(\bC)$ so that for every
$\rho \leq \rho_1$, and $\eta > 0$, we can find a $\delta_1(\bC, \eta,
\rho)$ satisfying the following. Let $M$ be a complete minimal hypersurface in $B_1$ satisfying
\begin{equation}\label{eqn:decay-once-hyp}
E(M, \bC, 1) < \delta_1, \quad ||M||(B_1) \leq (3/2)||\bC||(B_1). 
\end{equation}
Then we can find a $\lambda \in (-\eta, \eta)$ so that
\begin{equation}\label{eqn:decay-once-concl}
E(M, T_\lambda, \rho) \leq \rho^{\gamma - 1 + \eps_1}E(M, T_\lambda, 1),
\end{equation}
and $E(M, T_\lambda, 1) \leq c(\bC) E(M, \bC, 1)$.
\end{prop}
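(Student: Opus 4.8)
The plan is a contradiction-plus-blow-up argument. In the limit one obtains a Jacobi field $v$ on $\bC$ with $|v|\le c(\bC)|x|^\gamma$; since the only homogeneity-$\gamma$ Jacobi field on $\bC$ is a multiple of $|x|^\gamma\psi_1(\theta)$ (the degree-zero $\beta_1$-harmonic polynomial is constant) and this is precisely the infinitesimal generator of the Hardt--Simon foliation, subtracting it off amounts to replacing $\bC$ by a nearby leaf $T_\lambda$. The residual Jacobi field then has all Fourier modes of homogeneity $\ge q_2\ge\gamma+\eps_0$ (with $\eps_0$ as in Lemma~\ref{lem:3ann}), hence decays, and Theorem~\ref{thm:nc}, iterated over dyadic scales, transfers that decay to $M$.

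Concretely, suppose the proposition fails: for some $\eta,\rho$ there are $\delta_i\to0$ and complete minimal $M_i\subset B_1$ with $M_i\cap B_1\ne\emptyset$, $E(M_i,\bC,1)<\delta_i$ and $\|M_i\|(B_1)\le\tfrac32\|\bC\|(B_1)$, for which no $\lambda\in(-\eta,\eta)$ satisfies both conclusions. Let $\lambda_i\in(-\eta,\eta)$ attain $\nu_i:=\min_{|\lambda|<\eta}D_{T_\lambda}(M_i;B_1)$, which exists since $D_{T_0}(M_i;B_1)=E(M_i,\bC,1)\to0$ while $D_{T_{\pm\eta}}(M_i;B_1)$ stays bounded below; as $M_i\cap B_1\ne\emptyset$, the leaf-intervals trapping $M_i$ about $0$ and about $\lambda_i$ must overlap, so $|\lambda_i|\le E(M_i,\bC,1)+\nu_i\to0$. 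If $\nu_i=0$ then $M_i\cap B_1$ is a piece of $T_{\lambda_i}$ and both conclusions hold for $\lambda=\lambda_i$, a contradiction; so $\nu_i>0$. As in the proof of Lemma~\ref{lem:3ann}, $M_i\to[\bC]$ and $M_i=\graph_{T_{\lambda_i}}(w_i)$ on an exhaustion of $B_1\setminus\{r=0\}$; by Corollary~\ref{cor:nc} (with this $\lambda_i$ and $\mu_i=\nu_i$), a subsequence of $\nu_i^{-1}w_i$ converges in $C^\infty_{loc}(\bC\cap B_1\setminus\{r=0\})$ to a Jacobi field $v$ on $\bC\cap B_1$ with, by Corollary~\ref{cor:nc}(\ref{item:nc2}), the minimality of $\nu_i$, and Theorem~\ref{thm:nc} near $\{r=0\}$,
\[
  c(\bC)^{-1}\;\le\;\sup_{\bC\cap B_1}\big||x|^{-\gamma}v\big|\;\le\;c(\bC).
\]

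Expanding $v$ via \eqref{eqn:fourier0}, write $v=a|x|^\gamma\psi_1(\theta)+v'$ with $v'$ supported on Fourier modes of homogeneity $\ge q_2$; by \eqref{eqn:fourier} and Lemma~\ref{lem:linfty-l2}, $|a|\le c(\bC)$ and $\sup_{\bC\cap B_\sigma}\big||x|^{-\gamma}v'\big|\le c(\bC)\sigma^{\eps_0}\sup_{\bC\cap B_1}\big||x|^{-\gamma}v'\big|$ for $\sigma\le\tfrac12$. By \eqref{eqn:def-Psi}, $a|x|^\gamma\psi_1$ is the foliation generator, so with $\lambda_i':=\lambda_i+a\nu_i\in(-\eta,\eta)$ the graph of $M_i$ over $T_{\lambda_i'}$, rescaled by $\nu_i^{-1}$, converges to $v'$. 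Since $D_{T_{\lambda_i'}}(M_i;B_1)\ge\nu_i$ by minimality while $D_{T_{\lambda_i'}}(M_i;B_1)\asymp\nu_i\sup_{\bC\cap B_1}\big||x|^{-\gamma}v'\big|$ up to $\bC$-constants (by Lemma~\ref{lem:small-H-graph}, or Theorem~\ref{thm:barrier} in the regime $\nu_i\asymp|\lambda_i|$), we get $\sup_{\bC\cap B_1}\big||x|^{-\gamma}v'\big|\ge c(\bC)^{-1}$; in particular $v'\ne0$. By \eqref{eq:triangle}, $E(M_i,T_{\lambda_i'},1)=D_{T_{\lambda_i'}}(M_i;B_1)\le(1+|a|)\nu_i\le c(\bC)\,E(M_i,\bC,1)$ --- the second conclusion --- so by our hypothesis the decay fails for $\lambda_i'$: $D_{T_{\lambda_i'}}(M_i;B_\rho)>\rho^{\eps_1}D_{T_{\lambda_i'}}(M_i;B_1)$ for all $i$. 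To contradict this I would iterate Theorem~\ref{thm:nc} down through dyadic scales: it bounds $D_{T_{\lambda_i'}}(M_i;B_{\theta\sigma})$ by a fixed multiple of the graphical trapping data of $M_i$ over $T_{\lambda_i'}$ at scale $\sigma$ plus, for any $s>0$, $s\,D_{T_{\lambda_i'}}(M_i;B_\sigma)$; taking $s(\bC)$ small enough that this error propagation is dominated by the $\theta^{\eps_0}$-decay of the (rescaled limit) data and summing the geometric series gives $\limsup_i\nu_i^{-1}D_{T_{\lambda_i'}}(M_i;B_\rho)\le C(\bC)\rho^{\eps_0}$. Dividing by $D_{T_{\lambda_i'}}(M_i;B_1)\ge\nu_i$ yields $\rho^{\eps_1}\le C(\bC)\rho^{\eps_0}$, impossible once $\eps_1(\bC):=\eps_0/2$ and $\rho\le\rho_1(\bC):=C(\bC)^{-2/\eps_0}$.

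I expect the main obstacle to be the last step. The quantity $D_{T_\lambda}(M;\cdot)$ is governed by $M$ near the singular locus $\{r=0\}$, where no graphical description exists; converting control of the graph of $M$ away from $\{r=0\}$ into a bound on $D_{T_\lambda}$ over full balls is exactly the role of Theorem~\ref{thm:nc}, on which the whole scheme rests. The delicate point is that one application of Theorem~\ref{thm:nc} only relates two consecutive dyadic scales, with a fixed multiplicative loss, so one must iterate it down through scales --- choosing the interior parameter $s$ at each step small relative to the Fourier gap $\eps_0$ so that the accumulated error forms a convergent geometric series rather than swamping the $\theta^{\eps_0}$ gain --- and verify that the rescaled graphical data of the blow-up indeed decays at rate $\theta^{\eps_0}$, which is precisely the content of the spectral gap via \eqref{eqn:fourier} and Lemma~\ref{lem:linfty-l2}. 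A second, softer point is the choice of the base leaf $T_{\lambda_i'}$: one must guarantee that it eliminates the slowly-decaying homogeneity-$\gamma$ mode, which here comes from taking $\lambda_i$ to minimise $D_{T_\lambda}(M_i;B_1)$ and correcting by the generator, using that this mode is one-dimensional and that $\psi_1>0$.
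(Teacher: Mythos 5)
Your proposal follows essentially the same route as the paper: contradict, take the $E$-minimizing leaf $T_{\lambda_i}$, blow up via Corollary~\ref{cor:nc} to a Jacobi field $v$, split off the $a\,r^\gamma\psi_1$ mode by shifting the leaf to $\lambda_i+a\nu_i$, use the spectral gap \eqref{eqn:fourier} plus Lemma~\ref{lem:linfty-l2} to get $\rho^{\eps_0}$-decay of the remainder, and transfer this back to $D_{T_{\lambda_i'}}(M_i;B_\rho)$ by non-concentration. The only differences are cosmetic: the paper executes your final ``delicate'' step with a single application of Corollary~\ref{cor:nc}\eqref{item:nc3} at scale $\rho$ rather than a dyadic iteration of Theorem~\ref{thm:nc}, and your detour establishing $v'\neq 0$ is not needed (if $z=0$ the decay conclusion holds trivially).
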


\begin{proof}
Suppose the Proposition failed.  Then for $\eps_1, \rho_1$ to be determined later, we can find sequences $\delta_i \to 0$, and complete minimal hypersurfaces $M_i$ in $B_1$ satisfying \eqref{eqn:decay-once-hyp} but failing \eqref{eqn:decay-once-concl} for all $\lambda \in (-\eta, \eta)$.  Let $\lambda_i'$ minimize $\lambda \mapsto E(M_i, T_\lambda, 1)$.  Trivially $\lambda_i' \to 0$ and $E(M_i, T_{\lambda_i'}, 1) \leq E(M_i, \bC, 1)$.

By standard compactness and the constancy theorem, $M_i \to \kappa[\bC]$ as varifolds for $\kappa \in \{0, 1\}$.  Since \eqref{eqn:decay-once-concl} is trivially satisfied if $M \cap B_\rho = \emptyset$, by our contradiction hypothesis we have $M_i \cap B_\rho \neq \emptyset$ for all $i$, and hence $\kappa = 1$.  Allard's theorem implies therefore $M_i \to \bC$ smoothly on compact subsets of $B_1 \setminus \{ |x| = 0 \}$.

For $U_i$ an exhaustion of $B_1 \setminus \{ |x| = 0 \}$, we can write $M_i \cap U_i = \graph_{T_{\lambda_i'}}(u_i')$.  Passing to a subsequence, by Corollary \ref{cor:nc} we can get convergence $E(M, T_{\lambda_i'}, 1)^{-1} u_i' \to v$, for some Jacobi field on $\bC \cap B_1$ with $\sup_{\bC \cap B_1} ||x|^{-\gamma} v| \leq c(\bC)$, and hence (by Lemma \ref{lem:linfty-l2}) $\int_{\bC \cap B_1} |x|^{-2} v^2 \leq c(\bC)$.

By \eqref{eqn:fourier0}, \eqref{eqn:fourier}, we can write $v(r\theta, y) = a r^\gamma \psi_1(\theta) + z(r\theta, y)$, for $|a| \leq c(\bC)$ and $z$ satisfying the decay
\[
\rho^{-n-l} \int_{\bC \cap B_\rho} z^2 \leq c(\bC) \rho^{2\gamma+4\eps_1}
\]
for some $\eps_1(\bC) > 0$ determined by the spectral decomposition of $\bC_0$.  Using Lemma \ref{lem:linfty-l2} we deduce
\begin{equation}\label{eqn:decay-once-1}
\rho^\gamma \sup_{\bC \cap B_\rho} ||x|^{-\gamma} z| \leq c(\bC) \left(
  \rho^{-n-l}\int_{\bC \cap B_{2\rho}} z^2 \right)^{1/2} \leq c(\bC) \rho^{\gamma+2\eps_1}
\end{equation}
for all $\rho \leq 1/2$.

Let $\lambda_i = \lambda_i' + a E(M, T_{\lambda_i'}, 1)$.  By altering
$U_i$ as necessary, we can write $M_i \cap U_i =
\graph_{T_{\lambda_i}}(u_i)$, and it's straightforward to check that
$E(M, T_{\lambda_i'}, 1)^{-1} u_i \to v - a r^\gamma\psi_1 = z$
smoothly on compact subsets of $\bC \cap B_1 \setminus \{ |x| = 0
\}$. 

Using property~\eqref{eq:triangle} we have $E(M, T_{\lambda_i}, 1) \leq
c(\bC) E(M, T_{\lambda_i'}, 1)$,
and by definition of $\lambda_i'$ we have $E(M, T_{\lambda_i'}, 1)
\leq E(M, T_{\lambda_i}, 1)$.  Therefore, after passing to a
subsequence, we can assume 
\[
\frac{E(M, T_{\lambda_i'}, 1)}{E(M, T_{\lambda_i}, 1)} \to b, \quad 1/c(\bC) \leq b \leq 1.
\]
In particular, we have smooth convergence $E(M, T_{\lambda_i}, 1)^{-1} u_i \to b z$.

By \eqref{eqn:decay-once-1} and Corollary \ref{cor:nc}, we have
\[
\limsup_i E(M, T_{\lambda_i}, 1)^{-1} E(M, T_{\lambda_i}, \rho) \leq c(\bC) \rho^{\gamma-1} \sup_{\bC \cap B_{2\rho}} ||x|^{-\gamma} b z| \leq c(\bC) \rho^{\gamma - 1 + 2\eps_1}
\]
for all $\rho \leq 1/4$.  Choose $\rho(\bC)$ sufficiently small so
that $c(\bC) \rho^{\eps_1} \leq 1$, we deduce  
\[
E(M, T_{\lambda_i}, \rho) \leq \rho^{\gamma - 1 + \eps_1} E(M, T_{\lambda_i}, 1)
\]
for all $i \gg 1$.  This is a contradiction, and finishes the proof of Proposition \ref{prop:decay-once}.
\end{proof}

\begin{prop}\label{prop:grow-away}
There are constants $\eps_2(\bC) > 0$, $c_2(\bC) > 1$ so that the following holds.  Let $M$ be a complete minimal hypersurface in $\R^{n+l+1}$, and suppose that $R^{-1} M \to [\bC]$ as varifolds as $R \to \infty$.  Then there is a $\lambda$ so that
\begin{equation}\label{eqn:grow-away-concl1}
E(M, T_\lambda, L R) \geq c_2(\bC)^{-1} L^{\gamma-1+\eps_2} E(M, T_\lambda, R)
\end{equation}
for all $L > 1$ and $R$ sufficiently large (depending only on $M$).  In particular, either $M = T_\lambda$, or there is a constant $C(M) > 0$ independent of $R$ so that
\begin{equation}\label{eqn:grow-away-concl2}
E(M, T_\lambda, R) \geq R^{\gamma-1+\eps_2}/C(M) \quad \forall R \geq C(M).
\end{equation}
\end{prop}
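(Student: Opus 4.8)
\textbf{Proof proposal for Proposition \ref{prop:grow-away}.}

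The plan is to run a standard iteration using the geometric 3-annulus Lemma \ref{lem:3ann}, seeded by a single decay step from Proposition \ref{prop:decay-once}. First I would fix $\eps := \min\{\eps_1,\eps_0\}/16$ (with $\eps_1$ from Proposition \ref{prop:decay-once} and $\eps_0$ from Lemma \ref{lem:3ann}), and set $\eps_2 := \eps_0 - 2\eps$. Since $R^{-1}M \to [\bC]$, for any fixed large scale $R_0 = R_0(\bC,\eps)$ (as in Lemma \ref{lem:3ann}) and any $\delta_0 > 0$, there is an $R_1 = R_1(M)$ so that the rescaled surface $M_R := R^{-1}M$ satisfies the hypotheses \eqref{eqn:3ann-hyp} of Lemma \ref{lem:3ann} — i.e. $E(M, \bC, R R_0) = E(M_R, \bC, R_0) < \delta_0$ and $\theta_M(0, RR_0) \le (3/2)\theta_\bC(0)$ — for all $R \geq R_1$. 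Apply Proposition \ref{prop:decay-once} to $M_{R_1}$ (after a harmless rescaling of the $\rho$-parameter) to produce a $\lambda$ with $E(M, T_\lambda, \rho R_1) \le \rho^{\gamma-1+\eps_1} E(M, T_\lambda, R_1)$, and note $E(M, T_\lambda, R_1) \le c(\bC) E(M, \bC, R_1)$. Rewriting the decay inequality in the scale-invariant form, this says $E(M, T_\lambda, R_1) \ge E(M, T_\lambda, \rho R_1)\, \rho^{-(\gamma-1+\eps_1)}$, which — since $\eps_1 > \eps$ and $\rho < 1$ — implies the hypothesis $E(M,T_\lambda,R_1) \ge E(M,T_\lambda, R_1/R_0) R_0^{\gamma-1+\eps}$ needed to trigger Lemma \ref{lem:3ann} at scale $R_1$, after possibly adjusting $R_0$ relative to $\rho$.

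Next I would iterate: by Lemma \ref{lem:3ann} (with $\lambda$ now fixed, and $|\lambda| < \delta_0$ which we arrange by taking $\eta$ small in Proposition \ref{prop:decay-once}), the conclusion $E(M, T_\lambda, R_0 R_1) \ge E(M, T_\lambda, R_1) R_0^{\gamma-1+\eps_0-\eps}$ holds. But $\eps_0 - \eps > \eps$, so the triggering hypothesis of Lemma \ref{lem:3ann} is again satisfied at scale $R_0 R_1$, and we may apply it once more; by induction,
\[
E(M, T_\lambda, R_0^{k+1} R_1) \ge E(M, T_\lambda, R_0^k R_1)\, R_0^{\gamma-1+\eps_0-\eps}
\]
for all $k \ge 0$. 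Chaining these gives $E(M, T_\lambda, R_0^k R_1) \ge R_0^{k(\gamma-1+\eps_0-\eps)} E(M, T_\lambda, R_1)$, and interpolating over the dyadic-type scales $R_*^j$ using the crude monotonicity of $E$ in $R$ on intermediate annuli (or simply absorbing the bounded ratio $R_0$ into the constant $c_2$) upgrades this to the continuous statement \eqref{eqn:grow-away-concl1}: $E(M, T_\lambda, LR) \ge c_2^{-1} L^{\gamma-1+\eps_2} E(M, T_\lambda, R)$ for all $L > 1$ and $R \ge C(M)$, with $\eps_2 = \eps_0 - 2\eps > 0$.

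Finally, for the dichotomy: if $E(M, T_\lambda, R_1) = 0$ then $M \cap B_{R_1}$ is trapped between $H(\lambda)\times\R^l$ with zero separation, i.e. $M \cap B_{R_1} \subset T_\lambda$; since $M$ is a complete embedded minimal hypersurface and $T_\lambda$ is connected and smooth, unique continuation (or the constancy/maximum-principle argument) forces $M = T_\lambda$. Otherwise $E(M, T_\lambda, R_1) > 0$, and setting $C(M) := \max\{R_1, c_2 \, E(M,T_\lambda,R_1)^{-1} R_1^{\gamma-1+\eps_2}\}$, \eqref{eqn:grow-away-concl1} with $R = R_1$, $L = R/R_1$ yields \eqref{eqn:grow-away-concl2}. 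The main obstacle I anticipate is bookkeeping the interlocking smallness parameters: $\delta_0$ in Lemma \ref{lem:3ann} depends on $R_0$, which depends on $\eps$; and one must be sure the single seed step from Proposition \ref{prop:decay-once} produces a $\lambda$ small enough (via the $\eta$ parameter) that $|\lambda| < \delta_0$ holds at every scale in the iteration — but since $E(M, T_\lambda, \cdot)$ controls $D_{T_\lambda}$ and the iteration only moves to larger scales where $D$ shrinks relative to $|\lambda|$ is not automatic, one should instead note $|\lambda|$ is a fixed number determined once and for all, and choose $\delta_0$ (hence the threshold scale $R_1$) afterward so that $R_1^{-(1-\gamma)}|\lambda|$ and all coarser rescalings stay below the required thresholds. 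Verifying this ordering is compatible is the only genuinely delicate point; everything else is the routine 3-annulus iteration.
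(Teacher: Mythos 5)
Your proposal is correct and follows essentially the same route as the paper: seed the iteration by applying Proposition \ref{prop:decay-once} at one sufficiently large scale (with $\eta$ tied to the $\delta_0$ of Lemma \ref{lem:3ann} and $\rho=1/L_0$ with $L_0\geq\max\{R_0,1/\rho_1\}$), then iterate the 3-annulus lemma upward, noting that $\eps_0-\eps\geq\eps$ re-triggers its hypothesis at each coarser scale and that the rescaled $|\lambda|$ only shrinks as the scale grows since $\gamma-1<0$. The only cosmetic difference is your choice $\eps_2=\eps_0-2\eps$ versus the paper's $\eps_2=\eps$; both are admissible.
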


\begin{remark}\label{rem:change-lambda}
From \eqref{eq:triangle} and the scaling of $E$, if \eqref{eqn:grow-away-concl2} holds for some $\lambda$ then \eqref{eqn:grow-away-concl2} holds for any $\lambda'$, with a potentially larger $C(M, \lambda')$.
\end{remark}

\begin{proof}
Fix $\eps_2 = \eps = \min\{ \eps_0, \eps_1,1 \}/16$, $L_0 = \max \{ R_0(\bC, \eps), 1/\rho_1(\bC) \}$, let $\delta_0(\bC, \eps, R = L_0)$ be as in Lemma \ref{lem:3ann}, and let $\delta_1(\bC, \eta = \delta_0, \rho = 1/L_0)$ be as in Proposition \ref{prop:decay-once}.  By our hypothesis there is a radius $R_*$ so that for all $R \geq R_*$ we have
\[
E(M, \bC, R) < \min \{ \delta_0, \delta_1\}, \quad \theta_M(0, R) \leq (3/2) \theta_\bC(0).
\]

Apply Proposition \ref{prop:decay-once} to $R_*^{-1} M$ to obtain a $T_\lambda$, with $R_*^{1-\gamma} |\lambda| < \delta_0$, so that
\[
E(M, T_\lambda, R_*/L_0) L_0^{\gamma - 1 + \eps} \leq E(M, T_\lambda, R_*).
\]
By our choice of $\eps$, $L_0$, $R_*$, we can then apply Lemma \ref{lem:3ann} to $R_*^{-1} M$ to get
\[
E(M, T_\lambda, R_*) L_0^{\gamma - 1 + \eps} \leq E(M, T_\lambda, L_0 R_*).
\]
Now since $(R_* L_0)^{-1}T_\lambda = T_{ (R_* L_0)^{\gamma-1} \lambda}$, we can apply Lemma \ref{lem:3ann} again to $(L_0R_*)^{-1}M$ to get
\[
E(M, T_\lambda, L_0 R_*) L_0^{\gamma - 1+\eps} \leq E(M, T_\lambda, L^2_0 R_*).
\]
We can iterate to obtain
\[
E(M, T_\lambda, L^{k+l}_0 R_*) \geq L_0^{(\gamma - 1+ \eps)l} E(M, T_\lambda, L_0^kR_*), \quad \forall k, l \in \{0, 1, 2, \ldots \}.
\]
\eqref{eqn:grow-away-concl1} then follows with $c_2 = L_0^{\max\{ 2,
  \eps\}}= L_0^2$.  This completes the proof of Proposition \ref{prop:grow-away}.
\end{proof}

\section{One-sided decay and proof of main theorem}

\begin{prop}\label{prop:decay-away}
Let $M$ be a complete minimal hypersurface in $\R^{n+l+1}$ lying to one side of $\bC$, such that $\theta_M(\infty) < 2 \theta_\bC(0)$.  Then for any $\eps > 0$, we have
\begin{equation}\label{eqn:decay-away-concl1}
E(M, \bC, L R) \leq c_3(\bC, \eps) L^{\gamma - 1 + \eps} E(M, \bC, R)
\end{equation}
for all $L > 1$ and all $R$ sufficiently large (depending only on $M$).  In particular, there is a constant $C(M, \eps)$ independent of $R$ so that
\begin{equation}\label{eqn:decay-away-concl2}
E(M, \bC, R) \leq R^{\gamma - 1 + \eps} C(M, \eps) \quad \forall R \geq 1.
\end{equation}
\end{prop}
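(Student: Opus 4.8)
We may assume $M\neq\emptyset$ (otherwise there is nothing to prove) and, after reflecting, that $M$ lies \emph{above} $\bC$, i.e. $M\subset\bigcup_{\mu\geq 0}T_\mu$; we may also assume $M\neq\bC$, since otherwise $E(M,\bC,R)\equiv 0$. The first thing to record is that $R^{-1}M\to[\bC]$ as varifolds as $R\to\infty$: by the monotonicity formula any subsequential blow-down limit $V=\lim_j R_j^{-1}M$ is a non-zero stationary integral cone whose support lies weakly to one side of $\bC$, hence $V=k[\bC]$ for an integer $k\geq 1$ by Lemma~\ref{lem:cone-liou}; the density bound then forces $k\,\theta_\bC(0)=\theta_V(0)\leq\theta_M(\infty)<2\theta_\bC(0)$, so $k=1$. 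Since every blow-down limit is $[\bC]$, the full limit $R^{-1}M\to[\bC]$ holds, $\theta_M(\infty)=\theta_\bC(0)$, and in particular $E(M,\bC,R)\to 0$.

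The heart of the argument is to pin down the asymptotics of $M$ by a blow-up at infinity and to exploit one-sidedness there. Fix $R_j\to\infty$. Since $R^{-1}M\to[\bC]$, for $j$ large one can write $\eta_{0,R_j}(M)\cap B_K$ as $\graph_\bC(u_j)$ away from a shrinking neighborhood of $\{|x|=0\}$, and I would show that the rescalings $v_j:=E(M,\bC,R_j)^{-1}u_j$ converge (after passing to a subsequence and a diagonal argument in $K$) to a Jacobi field $v$ on \emph{all} of $\bC$ with $\sup_{\bC\cap B_K}||x|^{-\gamma}v|<\infty$ for every $K$, and with $v\not\equiv 0$. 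Two ingredients are needed here: Corollary~\ref{cor:nc} (including its non-concentration clause, which keeps the limit nonzero and well-behaved near $\{|x|=0\}$), and a ``doubling'' estimate $E(M,\bC,2R)\leq c(\bC)E(M,\bC,R)$ for $R$ large (so that $\sup_{\bC\cap B_K}||x|^{-\gamma}v_j|$ stays bounded for each fixed $K$). This doubling estimate I would prove by a compactness argument: if it failed, Corollary~\ref{cor:nc} together with Lemma~\ref{lem:linfty-l2} would produce a nonzero Jacobi field on a ball vanishing on a smaller concentric ball, contradicting unique continuation for $L_\bC$. The crucial point is now that, since $M$ lies above $\bC$, the graph functions satisfy $u_j\geq 0$, hence $v\geq 0$; Lemma~\ref{lem:linear-liouville} then forces $v=a\,|x|^\gamma\psi_1(\theta)$ for some $a>0$. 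This is the only step where one-sidedness is used, and it is what distinguishes the present situation from Proposition~\ref{prop:grow-away}.

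It then remains to convert this into the quantitative decay \eqref{eqn:decay-away-concl1}. Because $D_\bC(T_\mu;U)=|\mu|$ for every $U$ meeting $T_\mu$, the graph of $a|x|^\gamma\psi_1$ is ``the foliation direction'' to leading order, and the previous step (together with Corollary~\ref{cor:nc}\eqref{item:nc3}) gives that $E(M,\bC,R_j)^{-1}D_\bC(\eta_{0,R_j}M;B_\rho)$ tends to a finite limit \emph{independent of $\rho$} — this is precisely the place where it matters that $v$ has no Fourier modes other than $\psi_1$. Consequently, for any fixed $L_0>1$,
\[
\frac{E(M,\bC,L_0R)}{E(M,\bC,R)}=L_0^{\gamma-1}\,\frac{D_\bC(M;B_{L_0R})}{D_\bC(M;B_R)}\longrightarrow L_0^{\gamma-1}\qquad(R\to\infty).
\]
Given $\eps>0$, choosing $L_0=L_0(\bC,\eps)>1$ with $L_0^{-\eps}<1$ this yields $E(M,\bC,L_0R)\leq L_0^{\gamma-1+\eps}E(M,\bC,R)$ for all large $R$, and the doubling estimate upgrades this to \eqref{eqn:decay-away-concl1} for all $L>1$ (writing $L\in[L_0^k,L_0^{k+1})$ and absorbing the bounded remainder into $c_3$). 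Finally \eqref{eqn:decay-away-concl2} follows by iterating \eqref{eqn:decay-away-concl1} from $R=R_*(M)$ with $L$ so large that $c_3L^{\gamma-1+\eps}\leq L^{\gamma-1+2\eps}$, together with the trivial bound for $1\leq R\leq R_*$.

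The main obstacle will be the global control of the blow-up — that is, simultaneously ruling out concentration of the excess near $\{|x|=0\}$ (which is exactly the content of the non-concentration Theorem~\ref{thm:nc}) and no loss of mass at infinity (the doubling estimate, which interior elliptic estimates alone do not give). As an alternative to the last paragraph, once $v=a|x|^\gamma\psi_1$ is identified one could instead feed this into Proposition~\ref{prop:decay-once} and the $3$-annulus Lemma~\ref{lem:3ann} and propagate the decay exactly as in the proof of Proposition~\ref{prop:grow-away}; the essential input is the same.
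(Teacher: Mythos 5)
You have correctly identified the heart of the matter: one-sidedness forces any blow-down Jacobi field to be non-negative, Lemma~\ref{lem:linear-liouville} then forces it to equal $a\,r^\gamma\psi_1$, and this caps the growth of the excess at rate $\gamma-1$. This is exactly the mechanism the paper uses. However, your surrounding architecture has a genuine gap, concentrated in the ``doubling estimate'' $E(M,\bC,2R)\leq c(\bC)E(M,\bC,R)$, which your argument needs twice: to run the blow-up along an \emph{arbitrary} sequence $R_j\to\infty$ with the limit controlled on all of $\bC$ (so that Lemma~\ref{lem:linear-liouville}, which requires $\sup_{\bC\cap B_R}||x|^{-\gamma}v|<\infty$ for every $R$, applies), and to upgrade the single-scale decay to \eqref{eqn:decay-away-concl1} for all $L$. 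Your proposed proof of it --- compactness plus unique continuation --- does not close. If $D_\bC(M;B_{2R_j})/D_\bC(M;B_{R_j})\to\infty$ and you normalize by the outer excess, the limit Jacobi field indeed vanishes identically, but this yields no contradiction: Corollary~\ref{cor:nc}\eqref{item:nc3} only bounds $D$ on balls $B_{\theta^2}$ strictly interior to the ball of normalization, so the conclusion is merely that the excess concentrates near the outer boundary $\partial B_{2R_j}$, which nothing in the available toolkit excludes. (The same boundary-concentration issue undermines your claim that $E(M,\bC,L_0R)/E(M,\bC,R)\to L_0^{\gamma-1}$ exactly; Corollary~\ref{cor:nc} only gives a one-sided bound with a constant $c(\bC,\theta)$, though that weaker statement would suffice after taking $L_0$ large.)

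The paper circumvents both difficulties with a different architecture. First, instead of proving doubling, it exploits $E(M,\bC,R)\to 0$ to \emph{choose} the blow-up sequence: pick $R_i\to\infty$ with $2E(M,\bC,R_i)\geq\sup_{R\geq R_i}E(M,\bC,R)$ (see \eqref{eqn:decay-away-3}). Along this sequence the global bound $\sup_{\bC\cap B_R}R^{\gamma-1}||x|^{-\gamma}v|\leq c(\bC)$ comes for free, the Liouville lemma applies, and one obtains the decay $E(M,\bC,LR_i)\leq c(\bC)L^{\gamma-1}E(M,\bC,R_i)$ --- but only along this one sequence. Second, to convert this into decay at all large scales, the paper argues by contradiction: if \eqref{eqn:decay-away-concl1} fails at a single scale $L_0$ along some sequence, the 3-annulus Lemma~\ref{lem:3ann} propagates that failure outward to give \emph{growth} $E(M,\bC,LR)\geq c^{-1}L^{\gamma-1+\eps}E(M,\bC,R)$ for all $R\geq R_*$ and all $L>1$, which for large $L$ is incompatible with the decay along the special sequence $R_i$. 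Your closing remark gestures at this (``feed this into the 3-annulus lemma''), but the contradiction scheme --- growth everywhere versus decay along one well-chosen sequence --- is precisely the missing piece, not an optional alternative. I would encourage you to rebuild the proof around that scheme; the blow-up and Liouville steps you have are then exactly what is needed.
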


\begin{proof}
We first observe that by our hypotheses, the monotonicity formula, and Lemma \ref{lem:cone-liou} we must have $R^{-1} M \to [\bC]$ as varifolds as $R \to \infty$, and in particular we have $E(M, \bC, R) \to 0$ as $R \to \infty$.

If $M = \bC$ then the Proposition trivially holds, so assume $M \neq \bC$.  Fix $0 < \eps < \eps_0/16$ (there is no loss in assuming $\eps$ is as small as we like), and let $L_0 = R_0(\bC, \eps)$ as in Lemma \ref{lem:3ann}.  Suppose, towards a contradiction, there was a sequence $R_j \to \infty$ such that
\[
E(M, \bC, L_0 R_j) \geq L_0^{\gamma - 1 + \eps} E(M, \bC, R_j).
\]
Since $R_j^{-1} M \to \bC$, we can fix an $R_* = R_j$ sufficiently large and apply Lemma \ref{lem:3ann} successively to $R^{-1}_* M$, $(L_0 R_*)^{-1} M$, etc. to deduce
\begin{equation}\label{eqn:decay-away-1}
E(M, \bC, L_0^{k+l} R_*) \geq (L_0^k)^{\gamma - 1 + \eps} E(M, \bC, L_0^k R_*) \quad \forall k , l \geq 0
\end{equation}
By iterating \eqref{eqn:decay-away-1}, we deduce
\begin{equation}\label{eqn:decay-away-2}
E(M, \bC, L R) \geq c(\bC, \eps)^{-1} L^{\gamma - 1 + \eps} E(M, \bC, R)
\end{equation}
for all $L > 1$, and all $R \geq R_*$.

Choose $R_i \to \infty$ so that
\begin{equation}\label{eqn:decay-away-3}
2 E(M, \bC, R_i) \geq \sup_{R \geq R_i} E(M, \bC, R),
\end{equation}
and consider the rescaled surfaces $M_i = R_i^{-1} M$.  For $i \gg 1$, by Allard's theorem we can find an exhaustion $U_i$ of $\R^{n+l+1} \setminus \{|x| = 0\}$ so that $M_i = \graph_{\bC}(u_i)$.  From \eqref{eqn:decay-away-3}, we have
\[
2E(M_i, \bC, 1) \geq \sup_{R \geq 1} E(M, \bC, R) .
\]
Therefore by Corollary \ref{cor:nc}, after passing to a subsequence as
necessary, the rescaled graphs $E(M_i, \bC, 1)^{-1} u_i$ converge
smoothly on compact subsets of $\bC \setminus \{ |x| = 0 \}$ to a
Jacobi field $v$ satisfying
\begin{equation}\label{eqn:decay-away-4}
\sup_{\bC \cap B_R} R^{\gamma - 1} ||x|^{-\gamma} v| \leq c(\bC) \quad \forall R \geq 1.
\end{equation}
Moreover, since $M$ and hence $M_i$ all lie to one side of $\bC$, after flipping orientation as necessary we can assume $v \geq 0$.

Lemma \ref{lem:linear-liouville} implies $v(x = r\theta, y) = a r^\gamma \psi_1(\theta)$, where by \eqref{eqn:decay-away-4} $|a| \leq c(\bC)$.  From Corollary \ref{cor:nc}, for any $L > 1$ and any $i \gg 1$ we have
\[
\frac{E(M_i, \bC, L)}{E(M_i, \bC, 1)} \leq c(\bC) \sup_{\bC \cap B_{2L}} L^{\gamma - 1} ||x|^{-\gamma} v| \leq c(\bC) |a| L^{\gamma - 1} \leq c(\bC) L^{\gamma - 1},
\]
and hence
\begin{equation}\label{eqn:decay-away-5}
E(M, \bC, L R_i) \leq c(\bC) L^{\gamma - 1} E(M, \bC, R_i)
\end{equation}
for all $L > 1$ and $i$ sufficiently large, depending on $L$.

Combining \eqref{eqn:decay-away-2}, \eqref{eqn:decay-away-5} we get:
if $L > 1$, then for all $i$ large (depending on $L, M$) we have
\begin{equation}\label{eqn:decay-away-6}
c(\bC, \eps)^{-1}E(M, \bC, R_i) L^{\gamma - 1 + \eps} \leq E(M, \bC, L R_i) \leq c(\bC) L^{\gamma - 1} E(M, \bC, R_i) .
\end{equation}
Since $M \neq \bC$, for all $i \gg 1$ $E(M, \bC, R_i) \neq 0$, and so if we ensure $L(\bC, \eps)$ is sufficiently large \eqref{eqn:decay-away-6} will yield a contradiction.  Therefore, recalling our initial contradiction hypothesis, we must have
\begin{equation}\label{eqn:decay-away-7}
E(M, \bC, L_0 R) \leq L_0^{\gamma - 1 + \eps} E(M, \bC, R)
\end{equation}
for all $R \gg 1$.  Iterating \eqref{eqn:decay-away-7} gives \eqref{eqn:decay-away-concl1} and \eqref{eqn:decay-away-concl2}.
\end{proof}

\begin{proof}[Proof of Theorem \ref{thm:main}]
Assume that $M \neq T_\lambda$ for any $\lambda$.  As in the proof of Proposition \ref{prop:decay-away}, we have $R^{-1} M \to [\bC]$ as $R \to \infty$, and $E(M, \bC, R) \to 0$ as $R \to \infty$.  We can apply Proposition \ref{prop:grow-away} and Remark \ref{rem:change-lambda} to find a constant $C(M)$ so that
\begin{equation}\label{eqn:main-1}
E(M, \bC, R) \geq R^{\gamma-1+\eps_2} C^{-1} \quad \forall R \geq C.
\end{equation}
On the other hand, by Proposition \ref{prop:decay-away}, we can find another constant $C'(M)$ so that
\[
E(M, \bC, R) \leq R^{\gamma-1+\eps_2/2} C' \quad \forall R \geq 1,
\]
which contradicts \eqref{eqn:main-1} when $R \gg 1$.
\end{proof}

\end{document}